\documentclass[10pt,a4paper]{article}
\usepackage[utf8]{inputenc}
\usepackage[english]{babel}
\usepackage{geometry}
\usepackage{amsmath}
\usepackage{amsfonts}
\usepackage{amssymb}
\usepackage{amsthm}
\usepackage{bm}
\usepackage{hyperref}
\usepackage{kpfonts}
\usepackage{enumerate}
\usepackage{esint}
\usepackage[bb=dsserif]{mathalpha}
\usepackage{todonotes}
\usepackage{xcolor}
\usepackage{csquotes}
\usepackage[backend=biber, giveninits=true, maxbibnames=7]{biblatex}

\newcommand{\gmax}{g_{\mathrm{max}}}
\newcommand{\loc}{\mathrm{loc}}
\newcommand{\NN}{\mathbb{N}}
\newcommand{\RR}{\mathbb{R}}

\newcommand{\divg}{\mathrm{div}}
\newcommand{\supp}{\mathrm{supp}}
\newcommand{\faskip}{\hspace{8pt}}

\newtheorem{thm}{Theorem}[section]
\newtheorem{lem}[thm]{Lemma}
\newtheorem{prop}[thm]{Proposition}
\newtheorem{cor}[thm]{Corollary}

\newtheorem{mainthm}{Theorem}

\numberwithin{equation}{section}

\title{On the shape of the positivity region for a free boundary problem describing cell polarization}
\author{Sebastián Flores Sepúlveda\thanks{Universität Bonn, Germany, \texttt{sfloress@uni-bonn.de}}, Barbara Niethammer\thanks{Universität Bonn, Germany, \texttt{niethammer@iam.uni-bonn.de}}, Juan J. L. Velázquez\thanks{Universität Bonn, Germany, \texttt{velazquez@iam.uni-bonn.de}}}

\begin{document}

\maketitle

{\small \textbf{Abstract}
	In this paper we study a mass-constrained free boundary problem modeling cell polarization, in the regime where the mass is small. In the generic case of a signal with nondegenerate maxima, we prove that the solution converges locally to a global, integrable solution to an obstacle problem in the plane. We further show that the interface of the solution to the limit problem is an ellipse, the equation of which is explicit. We also study some cases where the signal has degenerate maxima, highlighting a variety of possible behaviors.
}

\section{Introduction}

Cell polarization is a crucial step in a number of biological processes, including cell division, differentiation and motility. There exists a vast biological literature studying this phenomenon, and a number of mathematical models have been proposed for its description. 

In the model proposed in \cite{NRV20}, cell polarization is described by a bulk-surface reaction-diffusion system. In short, inactive signaling proteins diffuse in the cytosol, represented as a bounded domain $ \Omega $ in three-dimensional space, and can attach to the membrane, i.e. the boundary of $ \Omega $, where they can become active by reacting with molecules in the exterior of the cell, represented by a ``forcing'' term in the reaction-diffusion system. 

In a suitable scaling limit, this system may be reduced to an obstacle-type problem for the density of active signaling proteins on the membrane, with a mass constraint representing the total number of signaling proteins. The rigorous justification of this reduction as well as existence and uniqueness of steady states for the obstacle-type problem may be found in \cite{NRV20} and further analysis of the time-dependent problem may be found in \cite{LNRV21, LNRV23, LNRV25}.

In this work, we study the model from \cite{NRV20} in the regime where the number of signaling proteins is small. We look for a detailed description of the support and the free boundary of the solutions to the obstacle-type problem posed by the model.  

The problem is as follows. Let $\Omega \subset \RR^3$ be a smooth bounded domain, $\Gamma:= \partial \Omega$, and $ g:\Gamma \rightarrow (0,1) $ be a continuous function. Then the density of active signaling proteins $ u $ solves 
\begin{equation}
\label{eq:obstacle_gamma}
\begin{cases}
	-\Delta_{\Gamma} u = -(1-g)\xi + \alpha g & \mathrm{on} ~ \Gamma, \\ 
	\xi \in [0,1],~~u \xi  = 1 & \mathrm{on}~ \{u>0\},\\
	u\geq 0,~ \displaystyle\int_{\Gamma} u \mathrm{d}S = M,
\end{cases}
\end{equation}
where $\Delta_{\Gamma}$ is the Laplace-Beltrami operator, and $ g $ represents a normalized chemical signal. The existence of a unique solution to \eqref{eq:obstacle_gamma}, given $M$, is given by Theorem 3.7 in \cite{NRV20}. We will denote by $(u_M, \xi_M, \alpha_M)$ the triplet which satisfies \eqref{eq:obstacle_gamma}. It is possible to think of the time-dependent version of problem \eqref{eq:obstacle_gamma} as a mass conserving parabolic obstacle problem, see \cite{LNRV23}.

Furthermore, \eqref{eq:obstacle_gamma} may be written in a more compact form by using the representation formula for $ \xi_M $:
\[
	\xi_M = \begin{cases}
		1			& \text{in }\{ u_M > 0 \},\\
		\dfrac{\alpha_M g}{1-g} & \text{in }\{ u_M = 0 \}.
	\end{cases}
\]
therefore, $ u_M $ satisfies
\[
\begin{cases}
	-\Delta_{\Gamma} u_M \geq ((1+\alpha_M)g - 1) & \mathrm{on} ~ \Gamma, \\ 
	u_M \geq 0 & \text{on }\Gamma, \\
	-\Delta_{\Gamma} u_M = ((1+\alpha_M)g - 1)& \mathrm{on} ~ \{u_M > 0\},\\
	\displaystyle\int_{\Gamma} u_M \mathrm{d}S = M.
\end{cases}
\]

There is a further reformulation of the problem which highlights its nonlocality. Exploiting an explicit formula for $\alpha_M$ (see \cite{LNRV21}), the problem may be rewritten as
\[
\begin{cases}
	-\Delta_{\Gamma} u_M = \left(1 - \dfrac{g}{\fint_{\{u_M > 0\}} g \mathrm{d}S}\right) \mathbb{1}_{\{u_M > 0 \}}& \mathrm{on} ~ \Gamma, \\ 
	u_M \geq 0 & \text{on }\Gamma, \\
	\displaystyle\int_{\Gamma} u_M \mathrm{d}S = M,
\end{cases}
\]
where $\fint_A g \mathrm{d}S$ denotes the average over the set $A\subset \Gamma$.

We shall see that the behavior of $ (u_M, \alpha_M) $ for $ M $ small depends on the structure of $ g $ near its maximum points. In the generic case where $ g $ is a Morse function, we may informally state our main result as follows. As in \cite{NRV20}, we denote $ \gmax := \max_{\Gamma} g $ and
\begin{equation}
\label{eq:alphazero}
	\alpha_0 := \frac{1 - \gmax}{\gmax}.
\end{equation}
\begin{mainthm}
	\label{mainthm:morse}
	Let $ g:\Gamma \rightarrow \RR $ be of class $ C^2 $ with a finite set of nondegenerate maximum points $ \{p_i\}_{i=1}^N $, i.e. $ g(p_i) = \gmax $ and $ D^2g(p_i) $ is a negative definite bilinear form for every $ i=1,\hdots,N $. Then, there exist positive numbers $ \{\lambda_i\}_{i=1}^N $ with $ \sum_{i=1}^{N} \lambda_i = 1 $ such that
	\begin{align*}
	\frac{1}{M}u_M \rightarrow \sum_{i=1}^{N} \lambda_i \delta_{p_i} \faskip \text{ as } M\rightarrow 0,
	\end{align*}
	where $ \delta_{p_i} $ is a Dirac delta centered at $ p_i $ and each $ \lambda_i $ depends only on $ D^2 g(p_i) $. Moreover, under a suitable scaling and near a maximum point $ p $, the free boundary $ \partial \{ u_M > 0\} $ converges to an ellipse, and is a smooth curve locally near $p$, for $ M $ sufficiently small.
\end{mainthm}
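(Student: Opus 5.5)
Near each nondegenerate maximum $p_i$ I will blow up the problem and reduce it to a planar obstacle problem with a quadratic right-hand side, solve that limit problem explicitly, and then fix the weights $\lambda_i$ through the common multiplier $\alpha_M$.

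\textbf{Step 1: localization.} I first show that $\alpha_M \to \alpha_0$ as $M\to 0$ and that $\{u_M>0\}$ concentrates near $\{p_i\}$. The set $\{u_M>0\}$ is contained in $\{(1+\alpha_M)g \ge 1\}$ up to small errors, and the compatibility condition $\int_\Gamma \Delta_\Gamma u_M=0$ forces $\int_{\{u_M>0\}}((1+\alpha_M)g-1)\,dS=0$. Together these give the concentration, and the rate $\alpha_M-\alpha_0 =: \varepsilon_M^2$ will fix the scales.

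\textbf{Step 2: blow-up at $p=p_i$.} In normal coordinates $x$ at $p$ I write
\[
(1+\alpha_M)g-1 \approx \gmax\Bigl(\varepsilon_M^2+\tfrac12 D^2g(p)[x,x]/\gmax\Bigr).
\]
I rescale $x=\varepsilon_M y$ and $u_M(x)=\varepsilon_M^4 v_M(y)$, so that mass scales as $M\sim\varepsilon_M^6$, that is $\varepsilon_M\sim M^{1/6}$. The rescaled $v_M$ solve an obstacle problem on growing domains in $\RR^2$:
\[
-\Delta v \ge \gmax\bigl(1 - y^TAy\bigr),\qquad A:=-\tfrac{1}{2\gmax}D^2g(p)>0,
\]
with equality on $\{v>0\}$. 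Uniform bounds come from nondegenerate growth of obstacle solutions together with boundedness of the rescaled support, which follows from the sign of the right-hand side for large $|y|$. With these bounds I pass to the limit: $v_M\to v$ locally in $C^{1,\beta}$, where $v$ is a global integrable solution with $\int_{\RR^2}(1-y^TAy)\mathbb 1_{\{v>0\}}\,dy=0$.

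\textbf{Step 3: explicit limit.} I seek $v$ supported on the ellipse $E=\{y^TBy<1\}$ and vanishing to second order on $\partial E$, i.e.\ $v=c\,(1-y^TBy)^2$ in suitable form. Computing $\Delta$ of this ansatz in principal axes gives a quadratic polynomial, and matching its coefficients with $\gmax(1-y^TAy)$ yields algebraic equations for $B$ and $c$; with $B$ diagonal in the eigenbasis of $A$ these are solvable. I expect the ansatz to need $v=(1-y^TBy)_+^2\,q(y)$, or more likely a Newtonian-potential-of-an-ellipse argument, as in the classical fact that homogeneous ellipsoids have quadratic interior potentials. I would first try to show that $w=\Gamma*(\gmax(1-y^TAy)\mathbb 1_E)$ is a quartic polynomial inside $E$ plus a constant, and then choose $E$ so that $v=w-\min$ vanishes to first order on $\partial E$. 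Uniqueness of the integrable global solution then comes from a comparison/monotonicity argument, and this uniqueness is what gives convergence of the whole family.

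\textbf{Step 4: weights and regularity.} The mass near $p_i$ equals $\varepsilon_M^6\int v^{(i)}$, and $\varepsilon_M$ is common to all $i$ because $\alpha_M$ is shared. Hence
\[
\lambda_i=\frac{\int v^{(i)}}{\sum_j\int v^{(j)}},
\]
which depends only on $D^2g(p_i)$. The delta convergence follows. For smoothness of the free boundary, the limit has a nondegenerate smooth boundary with every point regular, so Caffarelli's regular-point theory combined with the $C^{1,\beta}$ convergence gives that $\partial\{u_M>0\}$ is locally a smooth curve for small $M$.

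The main obstacle is Step 3, specifically proving uniqueness of the limit profile and establishing the uniform support bounds that exclude mass escaping at intermediate scales.
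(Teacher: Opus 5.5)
\textbf{Verdict.} Your outline is sound. It follows the paper's overall architecture: blow-up at each $p_i$, uniform bounds by comparison, $W^{2,p}$ compactness, an explicit quartic limit supported on an ellipse, uniqueness giving convergence of the whole family, weights from the shared multiplier, and Caffarelli's theorem. Step~1 is Theorem~\ref{thm:known} and can simply be cited. There are two genuine differences in execution.

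\textbf{Normalization.} You normalize by $\beta_M=\alpha_M-\alpha_0$ instead of by the local mass. The paper sets $\varepsilon_n=m_n^{1/6}$, so its limit problem carries a free multiplier $\overline{\alpha}$ and a unit-mass constraint. It must then prove $0<\liminf\beta_n/\varepsilon_n^2\le\limsup\beta_n/\varepsilon_n^2<\infty$ (Proposition~\ref{prop:nondeg_q_bound}, Corollary~\ref{cor:nondeg_inf_bound}) and match the multipliers across maxima. In your scaling the limit problem is parameter-free. Nontriviality is automatic, since $-\Delta v_M\ge F_M$ with $F_M>0$ near $0$ puts a paraboloid under $v_M$. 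The scale is common to all $p_i$ by construction, and $\lambda_i=\int v^{(i)}/\sum_j\int v^{(j)}$ falls out directly.

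\textbf{Explicit limit.} Your first ansatz already is the answer, so no Newtonian potentials are needed. The paper's $\Phi=C_0-t+t^2/(4C_0)$, with $t=k_1y_1^2+k_2y_2^2$, equals $C_0(1-t/(2C_0))^2$. With $w=1-y^\top By$, the identity $-\Delta(cw^2)=\gmax(1-y^\top Ay)$ reduces to $4c\,\mathrm{tr}B=\gmax$ and $B+2B^2/\mathrm{tr}B=A$, which has a unique positive diagonal solution.

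The ansatz also gives in one line what you only assert in Step~4, and what the paper proves by a lengthy comparison of semiaxes. Because the identity is polynomial, on $\{w\le0\}$ one has $\gmax(1-y^\top Ay)=-8c|By|^2+4c\,\mathrm{tr}(B)\,w<0$. Hence $c\,w_+^2$ genuinely solves the obstacle problem, not merely $-\Delta v=F\mathbb{1}_{\{v>0\}}$. The datum is also bounded away from $0$ near $\partial E$, which is the hypothesis of Proposition~\ref{prop:reg}. Together with uniform convergence and nondegeneracy, this also yields the Hausdorff convergence of the free boundaries to $\partial E$, which you do not state explicitly.

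\textbf{Uniform support bound.} The step you flag as delicate needs more than your one sentence. The sign of the datum for large $|y|$ only excludes components of $\{v_M>0\}$ that avoid a fixed ball. Nondegeneracy needs an $L^\infty$ bound, which you propose to derive from the support bound, so the argument as written is circular. The paper breaks this circle with a barrier (Proposition~\ref{prop:nondeg_infty_bd}). The barrier is the rescaled global solution with $\frac14 f$ in place of $f$, and it dominates $U_n$ by Lemma~\ref{lem:mp}. This uses $g\le\gmax-\frac12 f$ on the whole chart and the zero boundary data from Theorem~\ref{thm:known}(3).

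You can use your explicit quartic the same way. Alternatively, close your own argument quantitatively. Let $\rho$ be the largest $|y|$ in $\supp v_M$, with $\rho$ large. Then $-F_M\gtrsim\rho^2$ on a ball of radius $\rho/2$ around that point, so a rescaled Lemma~\ref{lem:nondeg} gives $\sup v_M\gtrsim\rho^4$. Comparison with $C(\rho^2-|y|^2)$ gives $\sup v_M\lesssim\rho^2$, which forces $\rho\lesssim1$.

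\textbf{Minor constant.} With $\varepsilon_M^2=\beta_M$, the rescaled datum is $\gmax(1-y^\top Ay)$ with $A=-\frac{1}{2\gmax^2}D^2g(p)$, not $-\frac{1}{2\gmax}D^2g(p)$. The factor is common to all $p_i$ and does not affect $\lambda_i$.
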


The formula for $ \lambda_i $ is given in Theorem \ref{thm:lambda_i}. 

The scaling mentioned in Theorem \ref{mainthm:morse} is the consequence of a local analysis around a maximum point. In fact, we perform this analysis under the slightly more general case when $\gmax - g$ is a positive function, homogeneous of degree $\gamma > 0$. We obtain the following theorem.

\begin{mainthm}
	\label{mainthm:isolated_hom}
	Let $ p \in \Gamma $ be a maximum of $ g $. Assume that there exist a neighborhood $\Theta$ of $ p $ and functions $ f,r: \Theta \rightarrow \RR $ such that the following holds in any coordinate system in $ \Theta $ mapping $ p $ to 0:
	\begin{enumerate}
		\item $ f $ is of class $ C^{1} $, $f(0) = 0$, $ f(x) > 0 $ for any $ x\neq 0 $ and $ f $ is positively $ \gamma $-homogeneous for some $ \gamma > 0 $, i.e. $ f(\lambda x) = \lambda^{\gamma} f(x) $ for any $ \lambda \geq 0 $,
		\item $ \frac{|r(x)|}{|x|^{\gamma}} \rightarrow 0$ as $ x\rightarrow 0 $, where $ \gamma $ is the homogeneity degree of $ f $,
		\item for any $ x \in \Theta $,
		\begin{equation}
		\label{eq:hyp_hom}
		g(x) = \gmax - f(x) + r(x).
		\end{equation}
	\end{enumerate}
	Then, defining $ m := \displaystyle\int_{\Theta} u_M \mathrm{d}S $ and 
	\[
		U_m(y) = m^{-(\gamma + 2)/(\gamma + 4)} u_M(m^{1/(\gamma + 4)}y) \faskip \forall y \in m^{-1/(\gamma + 4)} \Theta,
	\]
	it holds that  
	\begin{align*}
		\frac{\alpha_{M} - \alpha_0}{m^{\gamma/(\gamma + 4)}} &\rightarrow \overline{\alpha}, \\
		U_{m} &\rightarrow \overline{U} \text{ in }C^{1,\delta}(\RR^{2}) \faskip \forall \delta \in (0,1), \\ 
		\partial \{U_m > 0\} &\rightarrow \partial \{\overline{U} > 0\} \faskip \text{in the Hausdorff distance.}
	\end{align*}
	Moreover, $ \overline{U} $  is the unique solution to 
	\begin{equation}
		\label{eq:global}
		\begin{cases}
			-\Delta \overline{U} = \left( \gmax\overline{\alpha} - \dfrac{1}{\gmax}f(y) \right) \mathbb{1}_{\{\overline{U} > 0\}} & \text{ in }\RR^{2},\\
			\overline{U} \geq 0 \faskip \text{ in } \RR^{2}, \faskip \displaystyle\int_{\RR^{2}} \overline{U} \mathrm{d}x = 1, 
		\end{cases}
	\end{equation}
	and has compact support.
\end{mainthm}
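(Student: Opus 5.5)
We rescale, obtain uniform estimates, pass to the limit along subsequences, and then identify the limit through a uniqueness theorem for the global problem \eqref{eq:global}.

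\textbf{Rescaled equation.} Write $\varepsilon := m^{1/(\gamma+4)}$ and use local coordinates around $p$; the metric distortion of the Laplace--Beltrami operator is $O(\varepsilon^2)$ after rescaling. Set $\beta_m := (\alpha_M - \alpha_0)\varepsilon^{-\gamma}$. Since $(1+\alpha_0)\gmax = 1$, we have
\[
(1+\alpha_M)g - 1 = \gmax(\alpha_M-\alpha_0) - \tfrac{1}{\gmax} f + \tfrac{1}{\gmax} r + O\bigl((\alpha_M-\alpha_0)f\bigr).
\]
Hence $U_m$ satisfies
\[
-\Delta U_m = \Bigl(\gmax\beta_m - \tfrac{1}{\gmax}f(y) + o(1)\Bigr)\mathbb{1}_{\{U_m>0\}}
\]
locally uniformly in $y$, with $\int U_m = 1$ over the rescaled domain. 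To set this up, I first show that $\alpha_M \to \alpha_0$ and that $\{u_M>0\}$ concentrates near the maximum set of $g$. This follows because on the positivity set $\xi = 1 \ge \alpha_M g/(1-g)$ must be compatible with the constraint: integrating the equation over $\Gamma$ gives $\int_{\{u>0\}}((1+\alpha_M)g-1) = 0$, which forces $(1+\alpha_M)g \ge 1$ somewhere, and I expect the positivity set to shrink as $M \to 0$.

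\textbf{Uniform bounds.} I need (a) $\beta_m$ bounded, and (b) $\{U_m>0\}$ contained in a fixed ball $B_R$. For (b): on $\{U_m>0\}$ far out, the right-hand side is $\le \gmax\beta_m - c|y|^\gamma < 0$. A standard nondegeneracy argument for the obstacle problem (comparison with $c'|y-y_0|^2$ on a positivity component) combined with the mass bound $\int U_m = 1$ rules out positivity at large $|y|$ once $\beta_m$ is bounded. For (a): if $\beta_m \to \infty$, the right-hand side is $\ge c\beta_m$ on a ball of radius $\sim \beta_m^{1/\gamma}$. The nondegeneracy estimate $\sup_{B_r(y_0)} U \ge c r^2$ applied there, together with Harnack, gives mass $\gtrsim \beta_m$, which contradicts $\int U_m = 1$. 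If instead $\beta_m \to -\infty$, the right-hand side is negative everywhere, which contradicts $U_m \ge 0$ with nontrivial compactly supported mass, since integrating $-\Delta U_m$ over its support gives $0$. Once the support lies in $B_R$, the right-hand sides are bounded in $L^\infty$. Standard $W^{2,p}$ estimates then bound $U_m$ in $C^{1,\delta}$ for all $\delta<1$, with an $L^\infty$ bound from the mass via the mean value inequality. This step, and especially ruling out mass escaping to infinity or $\beta_m$ blowing up, is the main obstacle.

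\textbf{Limit and identification.} By Arzel\`a--Ascoli we extract $U_m \to \overline U$ in $C^{1,\delta}$ and $\beta_m \to \overline\alpha$. Since the supports are uniformly in $B_R$, the normalization $\int \overline U = 1$ passes to the limit. The limit equation holds in the obstacle-problem sense, i.e.\ with $\xi$ replaced by its weak-$*$ limit. Nondegeneracy shows that $\mathbb{1}_{\{U_m>0\}} \to \mathbb{1}_{\{\overline U>0\}}$ a.e., because $|\{\overline U=0\}\cap\partial\{\overline U>0\}|=0$ by Caffarelli-type regularity; hence $\overline U$ solves \eqref{eq:global}. The same nondegeneracy, combined with uniform convergence, gives Hausdorff convergence of the free boundaries.

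\textbf{Uniqueness.} It remains to show that \eqref{eq:global} has a unique solution, $\overline\alpha$ included; then the full family converges. For fixed $a$, the obstacle problem with right-hand side $\gmax a - f/\gmax$ has a unique minimal nonnegative compactly supported solution, namely the variational solution of $\min \int \frac12|\nabla U|^2 - (\gmax a - f/\gmax)U$ over $U \ge 0$. By comparison, its mass is strictly increasing and continuous in $a$, tending to $0$ at $a=0$ and to $\infty$ as $a\to\infty$. So there is exactly one $a$ with mass $1$. To finish, I show that any solution of \eqref{eq:global} is compactly supported (as in the second step) and therefore coincides with the variational solution for its $a$, for example by the comparison principle on $\RR^2$ for two solutions with the same source.
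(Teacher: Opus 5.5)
Most of your outline parallels the paper's proof. You rescale, rule out $\beta_m\to\infty$ by a mass contradiction, confine the supports to a fixed ball, use $W^{2,p}$ compactness, and identify the limit. Your two variations are legitimate. First, you confine the supports by applying nondegeneracy and the mass bound directly to $U_m$; the paper instead compares $U_m$ with a rescaled copy of the limit profile used as a supersolution. Second, you pin down $\overline\alpha$ by strict monotonicity of the mass in $a$, which by homogeneity is even explicit: it scales like $a^{1+4/\gamma}$. The paper instead characterizes the limit as the unique minimizer of the mass-constrained convex functional.

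Three smaller points:
\begin{itemize}
\item The facts $\alpha_M\searrow\alpha_0$ and $u_M\equiv 0$ near $\partial\Theta$ for small $M$ are only sketched (``I expect''). They should simply be quoted from Theorem~\ref{thm:known}.
\item In step (a) the source is positive, so nondegeneracy is not the relevant tool. What works is comparison with $c\beta_m(r^2-|y|^2)$ on a fixed ball, using that $-\divg(H\nabla U_m)\ge F_m$ holds on the whole domain, not only on $\{U_m>0\}$.
\item In the identification step you need no measure estimate for the free boundary. The obstacle formulation ($U\ge 0$, $-\Delta U\ge F$, and $-\Delta U=F$ in $\{U>0\}$) passes to the limit directly.
\end{itemize}

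\textbf{The genuine gap is the Hausdorff convergence of $\partial\{U_m>0\}$.} Nondegeneracy plus uniform convergence gives only one half. Since $\{F>0\}\subset\{\overline U>0\}$ and $\{F=0\}=\partial\{F>0\}$, every interior point of $\{\overline U=0\}$ has $F<0$. So $U_m$ vanishes near such a point for small $m$, and limit points of $\partial\{U_m>0\}$ therefore lie on $\partial\{\overline U>0\}$.

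The converse does not follow: that every $w\in\partial\{\overline U>0\}$ is close to $\partial\{U_m>0\}$. Suppose the zero set of $\overline U$ is thin near $w$, i.e.\ $w$ is a singular free-boundary point, such as a slit along which $\overline U\sim\frac{c}{2}\,\mathrm{dist}^2$. Then nothing prevents $U_m>0$ on a whole neighbourhood of $w$. For example, $u_k=\frac{1}{2}x_1^2+\frac{1}{k}\to\frac{1}{2}x_1^2$ are all solutions of $\Delta u=\mathbb{1}_{\{u>0\}}$; the limit has free boundary $\{x_1=0\}$ while the approximants have none. For a general homogeneous $f$ you have not excluded such points; only in the Morse case is the profile explicit.

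The paper avoids this with the two-sided comparison of Proposition~\ref{prop:hom_sandwich}. For each $\delta>0$ and all small $m$,
\[
v_{\overline\alpha-\delta}\le U_m\le v_{\overline\alpha+\delta},
\]
where $v_a(y)=(a/\overline\alpha)^{1+2/\gamma}\,\overline U\bigl((\overline\alpha/a)^{1/\gamma}y\bigr)$ is the self-similar family of solutions with source $\gmax a-f/\gmax$. Write $P=\{\overline U>0\}$. Since $\{v_a>0\}=(a/\overline\alpha)^{1/\gamma}P$, the set $\{U_m>0\}$ is squeezed between $\lambda_-P$ and $\lambda_+P$ with $\lambda_\pm\to 1$. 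This yields both halves at once. For $w\in\partial P$, pick $q\notin P$ near $w$; then $\lambda_+q\notin\{U_m>0\}$ lies near $w$, while points of $\lambda_-P$ near $w$ lie in $\{U_m>0\}$. Hence $\partial\{U_m>0\}$ passes near $w$.

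Both comparisons use only the comparison principle (Lemma~\ref{lem:mp}) and the inequality $-\divg(H\nabla U_m)\ge F_m$, so this step fits directly into your framework and closes the gap.
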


Theorem \ref{mainthm:isolated_hom} implies that the mass of $ U_M $ concentrates around the points of maximum homogeneity degree $ \gamma $. Indeed, let $ p_1 $, $ p_2 $ be two maximum points with homogeneity degree $ \gamma_i $, $ i=1,2 $, with $ \gamma_2 > \gamma_1 $. Then, we have
\[
	\frac{\alpha_M - \alpha_0}{m_i^{\gamma_i / (\gamma_i + 4)}} \rightarrow \overline{\alpha}_i \faskip \text{for }i=1,2.
\]
This implies that 
\[\frac{m_1^{\gamma_1/(\gamma_1 + 4)}}{m_2^{\gamma_2/(\gamma_2 + 4)}} \rightarrow \frac{\overline{\alpha}_2}{\overline{\alpha}_1},\]
in particular, there is some $ C > 0 $ such that
\[
	C^{-1} m_2^{\frac{\gamma_2(\gamma_1 + 4)}{\gamma_1(\gamma_2 + 4)}} \leq m_1 \leq C m_2^{\frac{\gamma_2(\gamma_1 + 4)}{\gamma_1(\gamma_2 + 4)}}.
\]
Thus, since $ \gamma_2 > \gamma_1 $, $ m_1 = o(m_2) $ as $ M \rightarrow 0$.

In the generic case of a Morse function, in the language of Theorem \ref{mainthm:isolated_hom}, we have $ f(y) = y^{\top}Ay $ which is homogeneous of degree 2, hence 
\[u_M(x) \sim m^{2/3} U_m(m^{-1/6}x)\]
near maximum points. Theorem \ref{mainthm:isolated_hom} also implies that the masses near maximum points are all comparable, which in turn yields the existence of the $ \lambda_i $ in Theorem \ref{mainthm:morse}. 

Solving problem \eqref{eq:global} explicitly for quadratic $f$, we get the limit ellipses mentioned in Theorem \ref{mainthm:morse}. Ellipsoidal and paraboloidal shapes are the only possible ones in several free boundary problems related to the obstacle and Stefan problem, as was first noticed in the physical literature \cite{Iva47,HC61}. Classification of all global solutions in the two-dimensional obstacle problem was obtained in \cite{Sak81}, and the study of higher dimensional case, in the context of global solutions for the Stefan problem, was initiated in \cite{DF86}. In both of these works, ellipsoidal shapes play a key role in the analysis. In the recent works \cite{ESW23, EFW25, SW25}, a complete classification of global solutions to the elliptic obstacle problem in all dimensions is achieved. We may read our results as a classification result for bounded solutions to
\[-\Delta u = \left( 1 - x^{\top}A x \right)\mathbb{1}_{\{u>0\}}. \]
Using the explicit formulas for the solution to this problem we also prove that the right-hand side does not vanish near the free boundary, allowing us to apply the regularity results for the free boundary of Caffarelli \cite{Caf77} for small but positive mass $M$. 

As stated, Theorems \ref{mainthm:morse} and \ref{mainthm:isolated_hom} completely describe the generic case of Morse functions. Describing the behavior of $ u_M $ near degenerate maxima, i.e. maxima where $D^2 g(p)$ is a singular bilinear form, is a delicate issue. We pick some examples in order to illustrate the possible situations. In each case, the structure of $ g $ is given by a higher order Taylor expansion of $ g $ which, since we assume the maxima are isolated, must eventually contain a polynomial which is positive outside $ 0 $. 

First, we study an example where $ f $ is such that \eqref{eq:hyp_hom} holds, but the homogeneity of $ f $ in each variable is different. This yields a degenerate elliptic operator in the limit, so that regularity estimates cease to imply compactness of the blow-up sequences. Nevertheless, we prove the following.
\begin{mainthm}
	\label{mainthm:inhom}
	Suppose $ p \in \Gamma $ is an isolated maximum of $ g $ and there exists a neighborhood $ \Theta $ of $ p $ such that, in local coordinates,
	\begin{equation}
		\label{eq:inhom_g}
		g(x) = \gmax - ax_1^{4} - bx_2^{2} + r(x),
	\end{equation}
with $ a $, $ b >0 $ and $ r(x) = o(x_1^{2} + x_2^{4}) $. Then, there exists a coordinate chart $ (x,\Theta) $ around $ p $ such that $x(p) = 0$, a number $\overline{\alpha} > 0$ and a function $ \overline{v} \in C_c^{1,1}(\RR^{2}) $ such that the following holds. Define 
	\[
		U_{m}(y) = m^{-8/11} u_M(x^{-1}(m^{1/11} y_1, m^{2/11} y_2)), \faskip \forall y\in m^{-8/11} x(\Theta),
	\]
	then
	\begin{align*}
		\frac{\alpha_M - \alpha_0}{m^{4/11}} &\rightarrow \overline{\alpha}, \\
		U_{m} &\rightarrow \overline{U} \text{ strongly in } L^{p}(\RR^{2}) \faskip \forall p\in [1,\infty], \\
		\partial_2 U_{m} &\rightharpoonup \partial_2 \overline{U} \text{ weakly in } L^{2}(\RR^{2}), \\
		\partial \{U_m > 0\} &\rightarrow \partial \{\overline{U} > 0\} \faskip \text{in the Hausdorff distance.}
	\end{align*}
	Moreover, $ \overline{U} $ is the unique solution to 
	\[
	\begin{cases}
		-\partial_{22} \overline{U} = \left( \gmax \overline{\alpha} -\dfrac{1}{\gmax}( a x_1^{4} + b x_2^{2}) \right)\mathbb{1}_{\{ \overline{U} > 0 \}} & \text{ in }\RR^{2}, \\ 
		\overline{U} \geq 0 \faskip \text{in }\RR^{2}, \faskip \displaystyle \int_{\RR^{2}} \overline{U} \mathrm{d}x  = 1, 	& 
	\end{cases}	
	\]
	and has compact support.
\end{mainthm}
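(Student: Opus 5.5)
Under the anisotropic rescaling $y=(m^{-1/11}x_1,m^{-2/11}x_2)$ and $U_m(y)=m^{-8/11}u_M$, the equation for $U_m$ becomes a singular perturbation $-\varepsilon\,\partial_{11}U_m-\partial_{22}U_m=(\text{rhs})\mathbb 1_{\{U_m>0\}}$ with $\varepsilon=m^{2/11}\to0$. The plan is to derive this equation, extract a limit by energy and one-dimensional arguments instead of elliptic regularity, and identify the limit by uniqueness.

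\textbf{Setting up the rescaled problem.} Choose coordinates near $p$ so that the metric is Euclidean at $0$ with vanishing first derivatives (normal coordinates). Then $\Delta_\Gamma=\partial_{11}+\partial_{22}+$ lower order terms with coefficients $O(|x|)$. Write $\alpha_M=\alpha_0+\beta$ and expand
\[
(1+\alpha_M)g-1=\gmax\beta-\tfrac{1}{\gmax}(ax_1^4+bx_2^2)+o(x_1^4+x_2^2)+O(\beta\,(x_1^4+x_2^2)).
\]
With $x_1=m^{1/11}y_1$, $x_2=m^{2/11}y_2$ and $\beta=m^{4/11}\overline\alpha_m$, the right-hand side scales like $m^{4/11}$. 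Since $\partial_{22}$ gains $m^{-4/11}$, a function of size $m^{8/11}$ solves an $O(1)$ problem. The mass constraint is consistent with this, because $\int U_m\,dy=m^{-8/11-3/11}m=1$. Hence
\[
-\varepsilon\,\partial_{11}U_m-\partial_{22}U_m=\Big(\gmax\overline\alpha_m-\tfrac1{\gmax}(ay_1^4+by_2^2)+o(1)\Big)\mathbb 1_{\{U_m>0\}},
\]
locally uniformly. The metric corrections, including the mixed terms $m^{-3/11}\cdot O(|x|)\,\partial_{12}$, become negligible.

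\textbf{Uniform bounds.} First I would show that $\overline\alpha_m$ is bounded above and below by positive constants, and that the supports of $U_m$ lie in a fixed compact set. The approach is to follow whatever was done in the proof of Theorem~\ref{mainthm:isolated_hom}, arguing by contradiction.
\begin{itemize}
\item \emph{Support bound.} The positivity set is contained in $\{ay_1^4+by_2^2\le \gmax^2\overline\alpha_m+o(1)\}$. This holds because at a maximum point of $U_m$ on a line we need rhs $\ge0$. More robustly, a nondegeneracy argument shows that points where the rhs is negative are not in the interior of the support.
\item \emph{Bounds on $\overline\alpha_m$.} If $\overline\alpha_m\to0$ the support shrinks, while the mass stays $1$ and the right-hand side is $O(\overline\alpha_m)$, which gives a contradiction. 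If $\overline\alpha_m\to\infty$, rescale again and argue similarly.
\item \emph{Mass transfer.} The comparison between masses at different maxima, used to relate $m$ and $M$, is as in Theorem~\ref{mainthm:isolated_hom}.
\end{itemize}
Next, test the equation with $U_m$ to get a uniform bound on $\|\partial_2U_m\|_{L^2}^2+\varepsilon\|\partial_1U_m\|_{L^2}^2$. On each vertical line, $-\partial_{22}U_m\le C$ together with compact support gives a uniform $L^\infty$ bound.

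\textbf{Main obstacle: compactness in $y_1$.} There is no control of $\partial_1U_m$ as $\varepsilon\to0$, and this is the step I expect to be hardest. My first attempt is to exploit the obstacle structure.
\begin{itemize}
\item \emph{Comparison with $V(y_1,\cdot)$.} For each fixed $y_1$, the limit problem decouples into a one-dimensional obstacle problem in $y_2$ with an explicit solution $V(y_1,\cdot)$. This is a parabola cap on an interval $|y_2|\le \ell(y_1)$, where $\ell$ is determined by the vanishing of the mean of the rhs. I would compare $U_m$ with a barrier built from $V$ plus $O(\varepsilon)$ corrections, using the maximum principle for the operator $-\varepsilon\partial_{11}-\partial_{22}$ in both directions.
\item \emph{Fallback.} Alternatively, obtain an $L^1$ equicontinuity in $y_1$ by differentiating the equation in $y_1$. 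The monotone dependence of the rhs on $y_1$ for $y_1>0$ suggests that $\partial_1U_m\le0$ there, which gives a BV bound in $y_1$. Combining this with the $H^1$ bound in $y_2$, Kolmogorov--Riesz gives strong $L^1$ convergence. Interpolation with $L^\infty$ bounds and uniform one-dimensional Lipschitz bounds then upgrades this to all $L^p$, including $L^\infty$.
\end{itemize}

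\textbf{Identifying the limit and conclusion.} Passing to the limit weakly, $\overline U$ solves the stated degenerate problem. Uniqueness follows because for a.e.\ $y_1$ one has a one-dimensional obstacle problem whose solution is determined by $\overline\alpha$, and $\overline\alpha$ is fixed by the unit mass constraint through strict monotonicity. Uniqueness then gives convergence of the whole family. The regularity $\overline U\in C^{1,1}_c$ comes from the explicit formula, as does nondegeneracy of the rhs near the free boundary $\{|y_2|=\ell(y_1)\}$. Nondegeneracy combined with uniform convergence yields Hausdorff convergence of the free boundaries.
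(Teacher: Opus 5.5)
Your skeleton (anisotropic rescaling, two-sided bounds on $\beta/m^{4/11}$, support and $L^\infty$ bounds, comparison with the explicit profile, energy bound on $\partial_2 U_m$, uniqueness) matches the paper. Your ``first attempt'' also points at the paper's key device. However, several steps as you wrote them fail.

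\textbf{Support and $L^\infty$ bounds.} The support claim is false: $\{U_m>0\}$ is \emph{not} contained, even up to $o(1)$, in the region where the right-hand side is nonnegative. Integrating the equation of a compactly supported solution gives $\int f\,\mathbb{1}_{\{U>0\}}=0$, so the right-hand side must be negative on part of the support. Explicitly, in normalized variables ($a=b=\gmax=1$), the limit has $\{\overline U>0\}=\{y_2^2/3+y_1^4<\overline\alpha\}$, which is strictly larger than $\{y_2^2+y_1^4<\overline\alpha\}$. So neither the ``maximum on a line'' argument nor ``negative right-hand side is not interior'' can be right.

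Next, ``$-\partial_{22}U_m\le C$ on each vertical line'' is unavailable. Indeed $-\partial_{22}U_m=f\mathbb{1}_{\{U_m>0\}}+\varepsilon\partial_{11}U_m$, and $\varepsilon\partial_{11}U_m$ has no uniform pointwise control. A nondegeneracy argument would need an $L^\infty$ bound beforehand, and would have to be run in anisotropic boxes, since the constant in Lemma~\ref{lem:nondeg} depends on the ellipticity ratio, here $\sim 1/\varepsilon$. So your support and $L^\infty$ bounds, and with them your lower bound on $\overline\alpha_m$, lack a valid argument.

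The paper obtains both bounds from one global comparison. Once $\beta_n/\varepsilon_n^4\le C$ (Proposition~\ref{prop:deg_bound_q}), it applies Lemma~\ref{lem:mp} on the whole rescaled chart with a rescaled copy of $\Phi$. This copy solves the limit problem with the weaker potential $\frac14(y_1^4+y_2^2)$ and is used as a supersolution for the perturbed operator; it is $C^{1,1}$, so $H_n$ only contributes $O(\varepsilon_n)$ errors. This gives $\supp U_n\subset B_{R_1}$ and $\|U_n\|_\infty\le C(\beta_n/\varepsilon_n^4)^2$, and the mass constraint then gives the lower bound (Proposition~\ref{lem:deg_infty_bd}, Corollary~\ref{cor:deg_bounds}).

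\textbf{Main step.} The barrier must be a \emph{shift of the parameter}, not an additive $O(\varepsilon)$ correction. After extracting $\beta_n/\varepsilon_n^4\to\overline\alpha$, one shows $V_{\overline\alpha-\delta}\le U_n\le V_{\overline\alpha+\delta}$ for $n$ large (Proposition~\ref{prop:sandwich}). The shift creates a uniform margin of order $\delta$ in the right-hand side. That margin absorbs the $O(\varepsilon_n)$ operator error, and also $R_n$ and $\beta_n/\varepsilon_n^4-\overline\alpha$, which are only $o(1)$. Outside $\supp V_{\overline\alpha+\delta}$ the right-hand side is negative, so the supersolution comparison is legitimate. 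Then \eqref{eq:cont_infty} gives uniform convergence directly. Since the positivity sets of $V_{\overline\alpha\pm\delta}$ are nested anisotropic dilates of that of $V_{\overline\alpha}$, you also get Hausdorff convergence. No compactness in $y_1$ and no nondegeneracy at the free boundary are needed. The latter would be delicate anyway, because the right-hand side vanishes at the tips $(\pm\overline\alpha^{1/4},0)$ of the limit free boundary.

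Your fallback does not work. For finite $m$ the remainder $r$ and the metric break the symmetry $y_1\mapsto -y_1$, so there is no reason for $\partial_1 U_m\le 0$ when $y_1>0$. Also, $L^1$ convergence plus Lipschitz control in $y_2$ alone does not yield $L^\infty$ convergence.

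\textbf{Coordinates.} Avoid normal coordinates. In any chart, the anisotropic scaling already reduces $-\divg(H_n\nabla\cdot)$ to $-h_{22}(0)\partial_{22}+O(\varepsilon_n)$ on $C^{1,1}$ functions. Passing to normal coordinates generically involves a change $x_2\mapsto x_2+kx_1^2+\dots$. This creates a term $x_1^2x_2$ of the same weight as $x_1^4$ and $x_2^2$, which destroys the assumed form of $g$.
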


A second example in which the hypotheses of Theorem \ref{mainthm:isolated_hom} fail is given by
\begin{equation}
	\label{eq:g_noncoercive}
	g(x) = \gmax - a x_1^{4} - b x_1^{2}x_2^{2} - c x_2^{6} + r(x)
\end{equation}
around a maximum, with $ a $, $ b $, $ c > 0 $. In this particular case, $ f $ may be chosen as positively 4-homogeneous, but its zero set is nontrivial. Furthermore, one could consider two different scales, $ (x_1, x_2) \sim (\ell y_1, \ell y_2) $ and $ (x_1, x_2) \sim (\ell^2 y_1, \ell y_2) $ for some characteristic length $ \ell $, which would lead, a priori, to two different possible blow-ups. We will prove that this is not the case, indeed, the following holds.

\begin{mainthm}
	\label{mainthm:noncoercive}
	Assume $ p\in \Gamma $ is a maximum of $ g $ such that \eqref{eq:g_noncoercive} holds in a neighborhood of $ p $. Define the rescaling
\[
	u_M(x) = m^{3/4} U_m(m^{-1/8}x),
\]
Then, there exist $ \overline{\alpha} > 0 $ and a nontrivial function $ \overline{U} \in W^{2,p}_{\loc}(\RR^{2}) $ for all $ p \in [1, \infty) $ such that 
	\begin{align*}
		\frac{\alpha_{M} - \alpha_0}{m^{1/2}} &\rightarrow \overline{\alpha}, \\
		U_{m} &\rightarrow \overline{U} \text{ in }C^{1,\delta}(\RR^{2}) \faskip \forall \delta \in (0,1),
	\end{align*}
\[
	\begin{cases}
		-\Delta \overline{U} = \left( \gmax\overline{\alpha} - \dfrac{1}{\gmax}(a x_1^{4} + bx_1^{2}x_2^{2}) \right) \mathbb{1}_{\{\overline{U} > 0 \}} & \text{ in }\RR^{2},\\
		\overline{U} \geq 0 \faskip \text{in }\RR^{2}, \faskip \displaystyle \int_{\RR^{2}} \overline{U} \mathrm{d}x  = 1. 	& 
	\end{cases}
\]
\end{mainthm}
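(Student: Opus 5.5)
We follow the scheme of Theorem \ref{mainthm:isolated_hom}, with the isotropic scaling of an effectively 4-homogeneous potential, and then check that the anisotropic scale $(x_1,x_2)\sim(\ell^2 y_1,\ell y_2)$ carries no mass in the limit.

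\textbf{Rescaled problem.} Set $\ell=m^{1/8}$ and $\varepsilon_m := (\alpha_M-\alpha_0)/m^{1/2}$. With $u_M(x)=m^{3/4}U_m(x/\ell)$, the equation $-\Delta_\Gamma u_M=((1+\alpha_M)g-1)$ on $\{u_M>0\}$ becomes
\[
-\Delta U_m = \Bigl(\gmax\varepsilon_m - \tfrac{1+\alpha_M}{1}\bigl(a y_1^4 + b y_1^2y_2^2 + m^{1/4}c y_2^6\bigr) + o(1)\Bigr)\mathbb{1}_{\{U_m>0\}},
\]
up to metric corrections of order $\ell^2$ in the Laplace--Beltrami operator. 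Here we use $(1+\alpha_0)\gmax=1$ and $1+\alpha_0=1/\gmax$. The mass is normalized: $\int U_m\,dy = 1+o(1)$. The term $c\,m^{1/4}y_2^6$ vanishes locally, which is why $c$ is absent from the limit.

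\textbf{A priori bounds.} We aim for two estimates.

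First, bound $\varepsilon_m$ from above and below. For the upper bound, the right-hand side is at least $\gmax\varepsilon_m - C|y|^4$ on a ball of radius $\sim\varepsilon_m^{1/4}$. A large $\varepsilon_m$ would then force mass $\gg 1$, by comparison with the radial solution of the obstacle problem with constant source. For the lower bound, $\int_{\{U>0\}}(\gmax\varepsilon_m - F)\,dy=0$, obtained by integrating the equation over a compactly supported positivity set. A small $\varepsilon_m$ would confine the support to $\{F\lesssim\varepsilon_m\}$, and the mass would then be too small. This bound is delicate because $F=a y_1^4+by_1^2y_2^2+m^{1/4}cy_2^6$ degenerates along the $y_2$-axis.

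Second, bound the support of $U_m$. The source is negative outside $\{F<\gmax\varepsilon_m\}$. This sublevel set has width $\sim\varepsilon^{1/4}$ in $y_1$ but length up to $\sim m^{-1/24}$ in $y_2$, i.e. of order the second scale. Since $\int(\gmax\varepsilon_m - F)\,dy = 0$ over the support, the support must also meet the region where $F$ is large.

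\textbf{Main obstacle: excluding the thin region.} The hardest step is to show that mass does not escape along the thin tube $|y_1|\lesssim |y_2|^{-1}$, $|y_2|\lesssim m^{-1/24}$, which corresponds to the anisotropic scale. The plan is a barrier argument. In the region $|y_2|\ge R$ the function is positive only where $a y_1^4+b y_1^2y_2^2<\gmax\varepsilon$, i.e. $|y_1|\lesssim \varepsilon^{1/2}/|y_2|$. The positivity set there is therefore a strip of width $w(y_2)\sim|y_2|^{-1}$, in which $-\partial_{11}U\le C$. This gives $U\lesssim w^2$ together with the boundary condition $U=|\nabla U|=0$ on the sides.

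An alternative is a one-dimensional obstacle comparison in $y_1$ for each fixed $y_2$, which yields $U(\cdot,y_2)\le C w(y_2)^2\sim |y_2|^{-2}$. The mass in $|y_2|>R$ is then at most $\int_R w^3\,dy_2\sim R^{-2}$, so the family is tight. Making this rigorous requires controlling $\partial_{22}U$. We would handle this via the nondegeneracy/quadratic growth property of the obstacle problem: $\sup_{B_r(y)}U\ge c r^2$ near free-boundary points wherever the source exceeds a positive constant. Combined with an $L^\infty$ bound, this shows the support cannot be long and thin with mass in it.

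\textbf{Compactness and identification of the limit.} Given tightness and uniform $L^\infty$ bounds, interior $W^{2,p}$ estimates give $U_m\to\overline U$ in $C^{1,\delta}_{\loc}$ along subsequences, with $\varepsilon_m\to\overline\alpha>0$. Tightness upgrades this to $\int\overline U=1$ and to global $C^{1,\delta}$ convergence. The limit solves the stated problem, using the standard fact that $\mathbb{1}_{\{U_m>0\}}\to\mathbb{1}_{\{\overline U>0\}}$ a.e. where the source is nonzero. The limit is nontrivial because its mass is preserved. Convergence of the full family, rather than a subsequence, follows once uniqueness is established, as for \eqref{eq:global} in Theorem \ref{mainthm:isolated_hom}. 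Uniqueness would come from the monotonicity in $\overline\alpha$ of the mass of the solution of the obstacle problem with a fixed source. If uniqueness fails, the statement still holds as an existence result along subsequences.
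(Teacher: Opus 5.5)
\textbf{Where the proposal has a gap.} Your architecture matches the paper's: the isotropic scale $m^{1/8}$, the upper bound on $\beta_n/\varepsilon_n^4$ via the comparison of Proposition~\ref{prop:nondeg_q_bound}, a tube of width $\sim|y_2|^{-1}$ and height $\sim|y_2|^{-2}$ giving an integrable majorant, and dominated convergence. Your count $\int_R^\infty w^3\,dy_2\sim R^{-2}$ is exactly what lies behind the paper's $\Psi$. The gap is that the tube estimate, which you flag as the main obstacle, is never proved, and the mechanism you propose cannot prove it.

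Two of your ingredients are wrong. First, the positivity set is not contained in $\{ay_1^4+by_1^2y_2^2<\gmax\varepsilon\}$: already in Section~\ref{ssect:radial_case}, $\Phi>0$ on $|x|<2$ while the source is positive only on $|x|<\sqrt2$. Second, nondegeneracy (Lemma~\ref{lem:nondeg}) holds where the source is $\le-\kappa_0$, not where it is positive; where the source is positive there are no free-boundary points at all.

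More seriously, nondegeneracy combined with a \emph{global} bound $\|U_n\|_\infty\le K$ is too weak. Near $(s,\rho)$ the source is $\lesssim -s^2\rho^2$ on a ball of radius $s/2$, so $U_n$ is forced to vanish there only once $s^4\rho^2\gtrsim K$. This gives width $\lesssim\rho^{-1/2}$, and the mass in $\{|y_2|\in(\rho,2\rho)\}$ is then bounded only by $K\rho^{1/2}$, which yields no tightness. Reaching width $\rho^{-1}$ first requires the local height bound $U_n\lesssim\rho^{-2}$ in the tube. That needs an upper barrier, which is exactly where the uncontrolled $\partial_{22}U_n$ enters.

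You also take for granted the uniform $L^\infty$ bound and the confinement in $y_1$. The compactly supported barrier of Proposition~\ref{prop:nondeg_infty_bd} is not available here. Your lower bound on $\varepsilon_m$ is left open as well.

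\textbf{How the paper closes these steps.} Testing the minimality of $U_n$ against a fixed bump and applying Nash's inequality gives a uniform $L^2$ bound. The local maximum principle then bounds $U_n$ where the source is nonpositive, and Lemma~\ref{lem:nondeg} gives $\supp U_n\subset(-t_1,t_1)\times\RR$. Since $U_n$ then vanishes on the whole boundary of $(-r_1,r_1)\times(-\varepsilon_n^{-1}R,\varepsilon_n^{-1}R)$, the one-variable barrier $\psi\propto\frac{\beta_n}{\varepsilon_n^4}(r_1^2-y_1^2)$ yields $\|U_n\|_\infty\le C_0\beta_n/\varepsilon_n^4$.

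For the tube, on $Q_\rho=(-r_1,r_1)\times(\rho/2,3\rho)$ the source is $\le C_1-\frac{\rho^2}{8}y_1^2$. The paper uses the supersolution $w=\varphi_1(y_1)+\varphi_2(y_2)+\varphi_3(y_2)$ with the following pieces:
\begin{itemize}
\item $\varphi_1$ is the explicit one-dimensional obstacle profile for a slightly larger source, with support $|y_1|\le\sqrt{48C_1}/\rho$ and height $\sim\rho^{-2}$;
\item $\varphi_2,\varphi_3$ are parabolic boundary layers of width $O(1)$ in $y_2$ that lift $w$ above $\|U_n\|_\infty$ on the horizontal edges.
\end{itemize}
The negative contribution $-\partial_{22}\varphi_{2,3}$ is absorbed by the surplus in $-\partial_{11}\varphi_1$ on $\supp\varphi_1$, and by the very negative source off it. Lemma~\ref{lem:mp} then gives width $\lesssim\rho^{-1}$ and height $\lesssim\rho^{-2}$ with no information on $\partial_{22}U_n$, and hence the integrable majorant.

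The lower bound you found delicate then follows immediately. Dominated convergence gives $\int\overline U=1$, which is incompatible with $\|U_n\|_\infty\le C_0\beta_n/\varepsilon_n^4\to0$ along any subsequence.

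Finally, your hedge on uniqueness is unnecessary. The limiting potential is $4$-homogeneous, so $v_\alpha(y)=\alpha^{3/2}v_1(\alpha^{-1/4}y)$ solves the problem with parameter $\alpha$ and has mass $\alpha^2\int v_1$. Together with uniqueness for fixed $\alpha$ (Proposition~\ref{prop:exist_noncoercive}), this pins down $\overline\alpha$ and gives convergence of the whole family.
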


These two examples do not exhaust all possible degeneracies, even if we assume all global maxima of $ g $ to be isolated.

The structure of the paper is as follows. In Section \ref{sect:preliminaries} we collect some results which will be necessary in our analysis. In Section \ref{sect:hom} we prove Theorem \ref{mainthm:isolated_hom} by performing a local analysis around maxima described by a positive, homogeneous function. In Section \ref{ssect:quadratic} we prove Theorem \ref{mainthm:morse} by studying the special case of nondegenerate maxima, and further describe the case where all maxima of $g$ are nondegenerate, obtaining explicit formulas for the limit profile. In Sections \ref{sect:inhom} and \ref{sect:noncoercive}, we prove Theorems \ref{mainthm:inhom} and \ref{mainthm:noncoercive} respectively. In Section \ref{sect:misc} we discuss, at a formal level, some tools for the analysis of the behavior of the interfaces in other cases.

\section{Preliminaries}
\label{sect:preliminaries}

In what follows, $\Omega \subset \RR^{3}$ is a bounded domain, whose boundary $\Gamma$ is $C^5$ regular. Recall that the Laplace Beltrami operator may be written in local coordinates as
\[-\Delta_{\Gamma}u = -\divg(H(x) \nabla u(x)),\]
where $H(x)$ is a symmetric matrix whose coefficients are those of the metric of $\Gamma$. By the regularity of $\Gamma$, the components of $H$ are $C^4(\Gamma)$ functions and, by choosing a suitable coordinate chart (for example, the Riemannian normal coordinates, see \cite[Section 5.5]{Pet16}), we may assume
\begin{equation}
	\label{eq:metric}
	H(x) = I + O(|x|^{2})\faskip \text{as } |x| \rightarrow 0. 
\end{equation}

Let us recall equivalent formulations of the obstacle problem. All of these formulations are classical, see \cite{FRRO22}. 
\begin{prop}
	Let $ \Omega $ be a smooth bounded domain in $ \RR^{n} $, $n\geq 2$ and $ f \in C(\overline{\Omega}) $. Then, for $ u \in H_0^{1} (\Omega)$ nonnegative, the following are equivalent,
	\begin{enumerate}
		\item $ u $ satisfies
			\[\int_{\Omega} \nabla u \cdot \nabla(v - u) \mathrm{d}x \geq \int_{\Omega} f(v-u) \mathrm{d}x \faskip \forall v \in H_0^{1}(\Omega), ~ v\geq 0.  \]  
		\item $ u $ minimizes 
		\[J(v) = \int_{\Omega} \frac{|\nabla v|^2}{2} - fv \mathrm{d}x\]
		among all nonnegative functions $ v \in H^{1}_0(\Omega) $
		\item $ u $ is continuous and satisfies  
		\[
		\begin{cases}
			-\Delta u \geq f & \text{ in } \Omega \\
			u \geq 0 & \text{ in } \Omega\\
			-\Delta u = f & \text{ in } \{u > 0\} 
		\end{cases}
		\]
		in the weak sense.
	\end{enumerate}
\end{prop}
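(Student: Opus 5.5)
We prove the cycle $1\Rightarrow 2\Rightarrow 1$ and then $1\Leftrightarrow 3$, with the regularity of minimizers as the only nontrivial input.

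\emph{$1\Leftrightarrow 2$.} Let $K=\{v\in H_0^1(\Omega): v\geq 0\}$. It is a closed convex subset of $H_0^1(\Omega)$, and $J$ is strictly convex, coercive and weakly lower semicontinuous on $K$, so $J$ has a unique minimizer on $K$. For $u,v\in K$, expanding the square gives
\[
J(v)-J(u)=\int_{\Omega}\nabla u\cdot\nabla(v-u)-f(v-u)\,\mathrm{d}x+\frac12\int_{\Omega}|\nabla(v-u)|^2\,\mathrm{d}x .
\]
If 1 holds, the first integral is nonnegative for every $v\in K$, hence $J(v)\geq J(u)$ and 2 holds. Conversely, if $u$ minimizes $J$ on $K$, then $u+t(v-u)\in K$ for $t\in[0,1]$ by convexity. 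Differentiating $t\mapsto J(u+t(v-u))$ at $t=0^+$ gives exactly the variational inequality 1.

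\emph{$1\Rightarrow 3$.} Taking $v=u+\varphi$ with $\varphi\in C_c^\infty(\Omega)$, $\varphi\geq 0$, in 1 gives $\int\nabla u\cdot\nabla\varphi\geq\int f\varphi$, i.e.\ $-\Delta u\geq f$ weakly. Hence $\mu:=-\Delta u-f$ is a nonnegative Radon measure. For continuity, use the standard regularity theory for the obstacle problem (as in \cite{FRRO22}): $-\Delta u$ is a measure bounded below, and $-\Delta u\leq f^+$ in the sense of distributions (Lewy--Stampacchia). Thus $\Delta u\in L^\infty_{\loc}$, so $u\in W^{2,p}_{\loc}$ for all $p<\infty$ and $u\in C^{1,\delta}_{\loc}$. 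In particular $u$ is continuous and $\{u>0\}$ is open. Now take $\varphi\in C_c^\infty(\{u>0\})$ of either sign. On $\supp\varphi$ the compactness and continuity give $u\geq c>0$, so $v=u\pm\varepsilon\varphi\geq 0$ for small $\varepsilon$. Using both signs in 1 yields equality, i.e.\ $-\Delta u=f$ in $\{u>0\}$.

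\emph{$3\Rightarrow 1$.} Let $\mu=-\Delta u-f\geq 0$, which is supported in $\{u=0\}$ by the third condition. For $v\in K$, approximate $v$ and $u$ by smooth functions and use the continuity of $u$ to justify the pairing. Then
\[
\int\nabla u\cdot\nabla(v-u)-f(v-u)=\langle\mu,v-u\rangle=\langle\mu,v\rangle\geq 0,
\]
since $u=0$ on $\supp\mu$ and $v\geq 0$.

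The main obstacle is the Lewy--Stampacchia bound $-\Delta u\leq f^+$, which gives the continuity used in $1\Rightarrow 3$. I would prove it by penalization. Let $u_\varepsilon$ solve $-\Delta u_\varepsilon=f+\beta_\varepsilon(u_\varepsilon)$, where $\beta_\varepsilon(s)=\varepsilon^{-1}s^-$. A maximum principle argument gives $0\leq\beta_\varepsilon(u_\varepsilon)\leq f^-$. Passing to the limit $\varepsilon\to 0$ then shows $0\leq\mu\leq f^-$, so $\mu$ is bounded and $\Delta u$ is bounded. A secondary technical point is making the pairing $\langle\mu,v\rangle$ rigorous for general $v\in H_0^1$. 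Once $\mu\in L^\infty$ is known this is immediate, so the same penalization bound also settles it.
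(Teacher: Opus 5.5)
The paper does not prove this proposition. It quotes it as classical with a reference to \cite{FRRO22}, so your argument has to stand on its own. Your $1\Leftrightarrow 2$ and $1\Rightarrow 3$ are the standard arguments and are essentially fine, with two small corrections:
\begin{itemize}
\item The maximum principle for the penalized problem does not give the pointwise bound $\beta_\varepsilon(u_\varepsilon)\le f^-$. In general $u_\varepsilon<0$ also at points where $f\ge 0$. Testing with $(u_\varepsilon+\varepsilon\sup_\Omega f^-)^-$ gives only $0\le\beta_\varepsilon(u_\varepsilon)\le\sup_\Omega f^-$, but that is all you need for $\Delta u\in L^\infty$.
\item You should say explicitly that the weak limit of $u_\varepsilon$ satisfies 1, and hence equals your $u$ by the uniqueness you get from strict convexity.
\end{itemize}

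The genuine gap is in $3\Rightarrow 1$. The whole content of this implication is the identity $\langle\mu,u\rangle=0$, i.e.\ $\int_\Omega|\nabla u|^2=\int_\Omega fu$. Your closing remark that this pairing is ``settled by the same penalization bound'' is circular. The bound $\mu\in L^\infty$ was proved for the solution of the variational inequality 1, i.e.\ the limit of the $u_\varepsilon$. In $3\Rightarrow 1$ you only know that $u$ satisfies 3, and showing that $u$ coincides with the solution of 1 is exactly what has to be proved. A priori $\mu$ is just a nonnegative Radon measure in $H^{-1}(\Omega)$ concentrated on $\{u=0\}$, and ``approximate by smooth functions and use continuity'' does not by itself show $\langle\mu,u\rangle=0$.

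A direct argument that uses continuity where it is needed runs as follows.
\begin{itemize}
\item For $\delta>0$, the function $(u-\delta)^+\in H^1_0(\Omega)$ vanishes on the open set $\{u<\delta\}\supset\{u=0\}$.
\item Take $\phi_k\in C_c^\infty(\Omega)$ with $\phi_k\to(u-\delta)^+$ in $H^1$. The truncations $\min\{(u-\delta)^+,\phi_k^+\}$ are $H^1$ functions with compact support in $\{u\ge\delta\}\cap\supp\phi_k\subset\{u>0\}$, and they converge to $(u-\delta)^+$ in $H^1$.
\item Hence the equation $-\Delta u=f$ in $\{u>0\}$ can be tested with them, giving $\int\nabla u\cdot\nabla(u-\delta)^+=\int f(u-\delta)^+$.
\item Let $\delta\to 0$, using $\nabla u=0$ a.e.\ on $\{u=0\}$. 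This yields $\int|\nabla u|^2=\int fu$.
\item Combine this with $\int\nabla u\cdot\nabla v\ge\int fv$ for all nonnegative $v\in H^1_0(\Omega)$, which follows by density of nonnegative test functions. This gives 1.
\end{itemize}
Alternatively, you can cite the fact that a nonnegative measure in $H^{-1}$ does not charge sets of zero capacity and pairs with $H^1_0$ functions through their quasi-continuous representatives. For continuous $u$, that representative is $u$ itself.
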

The main properties of solutions to \eqref{eq:obstacle_gamma} are summarized in the following theorem.
\begin{thm}[Proposition 3.9 in \cite{NRV20}]
	\label{thm:known}
	Consider a sequence $M_n \searrow 0$ and denote the corresponding solution to  \eqref{eq:obstacle_gamma} by $(u_n, \xi_n, \alpha_n)$. Then the following hold
\begin{enumerate}
	\item $ \alpha_n \searrow \alpha_0 $, with $\alpha_0$ given by \eqref{eq:alphazero},
	\item $ u_n \rightharpoonup 0 $ in $ W^{2,p}(\Gamma) $ and $ u_n \rightarrow 0 $ in $ C^{1,\gamma}(\Gamma) $ for all $0 \leq \gamma <1$ and $1\leq p <\infty$,
	\item For any $\delta > 0$ there exists $n_0$ such that if $ \mathrm{dist}(x, S) > \delta $, then $ u_n(x) = 0 $ for all $n\geq n_0$.
\end{enumerate}	
\end{thm}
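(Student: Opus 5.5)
The plan is to extract all three items from the mass balance $\int_\Gamma \Delta_\Gamma u_n\,\mathrm{d}S=0$, uniform elliptic estimates, and the classical nondegeneracy of the obstacle problem. Here $S$ denotes the set of maximum points of $g$.

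\textbf{Step 1: an identity for $\alpha_n$.} Integrate the first equation of \eqref{eq:obstacle_gamma} over $\Gamma$ and use the representation formula for $\xi_n$. On $\{u_n=0\}$ we have $(1-g)\xi_n=\alpha_n g$, so those contributions cancel and we are left with
\[
\alpha_n=\frac{\int_{\{u_n>0\}}(1-g)\,\mathrm{d}S}{\int_{\{u_n>0\}} g\,\mathrm{d}S}.
\]
This gives two bounds.
\begin{itemize}
\item Lower bound: $\alpha_n\ge (1-\gmax)/\gmax=\alpha_0$.
\item Upper bound: $\alpha_n\le (1-\min g)/\min g$, which is finite because $g$ is continuous with values in $(0,1)$.
\end{itemize}
Hence the right-hand side $-(1-g)\xi_n+\alpha_n g$ is bounded in $L^\infty(\Gamma)$ uniformly in $n$.

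\textbf{Step 2: compactness and identification of the limits.} Write $u_n=c_n+w_n$, where $w_n$ has zero mean. Elliptic $L^p$ theory on the closed surface $\Gamma$ bounds $w_n$ in $W^{2,p}$ for all $p<\infty$. Since $u_n\ge 0$ and $\int_\Gamma u_n\,\mathrm{d}S=M_n\to 0$, the constants satisfy $c_n=M_n/|\Gamma|\to0$. Thus $(u_n)$ is bounded in $W^{2,p}(\Gamma)$, and by compact embedding it is precompact in $C^{1,\gamma}(\Gamma)$.

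Any limit point $u$ satisfies $u\ge0$ and $\int_\Gamma u\,\mathrm{d}S=0$, so $u\equiv0$. Therefore the whole sequence converges: $u_n\rightharpoonup 0$ in $W^{2,p}$ and $u_n\to0$ in $C^{1,\gamma}$.

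Next, along a subsequence $\xi_n\overset{*}{\rightharpoonup}\xi$ in $L^\infty$ and $\alpha_n\to\alpha^*\in[\alpha_0,\infty)$. Passing to the limit in the weak form of the equation gives
\[
0=-(1-g)\xi+\alpha^*g,\qquad\text{so}\qquad \xi=\frac{\alpha^* g}{1-g}.
\]
The constraint $\xi\le 1$ is preserved under weak-$*$ limits. Evaluating near a maximum point gives $\alpha^*\gmax\le 1-\gmax$, i.e. $\alpha^*\le\alpha_0$. Hence $\alpha^*=\alpha_0$, and the full sequence converges.

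For the monotonicity $\alpha_n\searrow\alpha_0$ I would use the comparison principle underlying uniqueness in \cite{NRV20}: if $M$ increases then $\{u_M>0\}$ increases. By the formula above, $\alpha_M$ is then a ratio of averages over a growing set. If that route turns out awkward, I would settle for the convergence statement alone.

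\textbf{Step 3: localization (item 3), the main obstacle.} Fix $\delta>0$. By continuity of $g$ and compactness of $\Gamma$ there is $\eta>0$ with $g\le\gmax-\eta$ on $\{\mathrm{dist}(\cdot,S)>\delta/2\}$. Since $\alpha_n\to\alpha_0$ and $(1+\alpha_0)\gmax=1$, for $n$ large
\[
(1+\alpha_n)g-1\le -c<0 \quad\text{there.}
\]
So on $\{u_n>0\}\cap\{\mathrm{dist}>\delta/2\}$ the function $u_n$ satisfies $\Delta_\Gamma u_n\ge c$, a strictly positive source in the standard obstacle-problem sense.

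Now apply the classical nondegeneracy lemma: if $u_n(x)>0$, then $\sup_{B_r(x)}u_n\ge c' r^2$ for $r\le\delta/2$. Its proof compares $u_n$ with $u_n(y)-u_n(x)-\tfrac{c}{4}d(x,y)^2$ on the component of $\{u_n>0\}\cap B_r(x)$ containing $x$, using the maximum principle. On the surface the squared distance has Laplacian $\ge 3$ for small $r$ by \eqref{eq:metric}, so the argument carries over with adjusted constants.

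Applying this at any point with $\mathrm{dist}(x,S)>\delta$, and noting $B_{\delta/2}(x)\subset\{\mathrm{dist}>\delta/2\}$, gives $\|u_n\|_\infty\ge c'\delta^2/4$. This contradicts uniform convergence $u_n\to0$ for $n\ge n_0$, so $u_n(x)=0$ there.

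The main care needed is adapting nondegeneracy to the Laplace--Beltrami operator, and ensuring the constant $c$ stays uniform in $n$.
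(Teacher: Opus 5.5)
The paper gives no proof of this statement; it is imported from \cite{NRV20}. Your argument therefore has to stand on its own. For items 2 and 3, and for the part of item 1 asserting $\alpha_n\ge\alpha_0$ and $\alpha_n\to\alpha_0$, it does. The identity $\alpha_n=\int_{\{u_n>0\}}(1-g)\,\mathrm{d}S\big/\int_{\{u_n>0\}}g\,\mathrm{d}S$, the mean-zero $W^{2,p}$ estimate, the weak-$*$ limit of $\xi_n$ forcing $\alpha^*\le\alpha_0$, and the localization via nondegeneracy plus uniform convergence are all sound. Your Step 3 is essentially a rescaled version of Lemma~\ref{lem:nondeg}.

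There is one slip in Step 3. For $u_n-u_n(x)-\kappa\,d(x,\cdot)^2$ to be subharmonic on $\{u_n>0\}$ you need an \emph{upper} bound $\Delta_\Gamma d(x,\cdot)^2\le C$ on small balls, for instance $C=5$ by \eqref{eq:metric}, together with $\kappa=c/C$. A lower bound $\ge 3$ points the wrong way.

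The genuine gap is the monotonicity in item 1, which you leave unproved.
\begin{itemize}
\item You assume without proof that $\{u_M>0\}$ increases with $M$. The natural way to prove that is comparison in $\alpha$, and that comparison already gives monotonicity of $\alpha_M$ directly.
\item Even granting nestedness, an average $\fint_A g$ over a growing set $A$ need not decrease. It does here, but only because $(1+\alpha_{M_1})g\le 1$ a.e.\ on $\{u_{M_1}=0\}$ (from $\xi\le1$). You do not invoke this fact.
\end{itemize}

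The direct argument is as follows. Let $M_1<M_2$ and suppose $\alpha_{M_1}>\alpha_{M_2}$, so that $f_1:=(1+\alpha_{M_1})g-1\ge f_2:=(1+\alpha_{M_2})g-1$.
\begin{enumerate}
\item Test the variational inequality for $u_{M_2}$ with $\min(u_{M_1},u_{M_2})$, and the one for $u_{M_1}$ with $\max(u_{M_1},u_{M_2})$. Adding the two gives $\int_\Gamma|\nabla_\Gamma(u_{M_2}-u_{M_1})^+|^2\,\mathrm{d}S\le-\int_\Gamma(f_1-f_2)(u_{M_2}-u_{M_1})^+\,\mathrm{d}S\le0$. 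Hence $(u_{M_2}-u_{M_1})^+\equiv c$ for some constant $c\ge0$.
\item If $c=0$, then $u_{M_2}\le u_{M_1}$, so $M_2\le M_1$, a contradiction.
\item If $c>0$, then $u_{M_2}=u_{M_1}+c>0$ on all of $\Gamma$. Hence $-\Delta_\Gamma u_{M_1}=-\Delta_\Gamma u_{M_2}=f_2$ a.e.\ on $\Gamma$. But $-\Delta_\Gamma u_{M_1}=f_1$ on the positive-measure set $\{u_{M_1}>0\}$. Since $g>0$, this forces $\alpha_{M_1}=\alpha_{M_2}$, again a contradiction.
\end{enumerate}
Thus $M\mapsto\alpha_M$ is nondecreasing, and combined with your convergence argument this gives $\alpha_n\searrow\alpha_0$.
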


We recall here a classical comparison principle for solutions to variational inequalities, see \cite[Theorem 2.6.4]{KS80}. Recall that $ G = (G(x)_{ij}) $ is uniformly elliptic in a domain $ \Omega $ if there exist $ \Lambda $, $ \lambda >0 $ such that
\[\Lambda |x|^{2} \geq x^{\top}G(x)x \geq \lambda |x|^{2} \faskip \forall x\in \Omega.\]
We call $ \frac{\Lambda}{\lambda} $ the ellipticity constant of $ G $.
\begin{lem}
	\label{lem:mp}
	Let $\Omega \subset \RR^{n}$ be a smooth bounded open set, $G: \Omega \rightarrow \RR^{n \times n}$ is a bounded, uniformly elliptic matrix, and $f\in C(\Omega)$. Let $u \in H^{1}(\Omega) \cap C(\Omega)$, $v \in H^{1}(\Omega)$ be nonnegative functions satisfying 
	\[
		- \divg(G\nabla u) = f\mathbb{1}_{\{u>0\}}, ~~ -\divg(G\nabla v)\geq f ~~ \text{in } \Omega,
	\]
	in the weak sense, and $v\geq u$ on $\partial \Omega$. Then, $v\geq u$ in $\Omega$. 
\end{lem}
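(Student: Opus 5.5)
We need to prove Lemma \ref{lem:mp}: the comparison principle. The plan is to test the two weak formulations with $w:=(u-v)^+$ and use ellipticity. Since $v\geq u$ on $\partial\Omega$, the function $w$ lies in $H_0^1(\Omega)$. It is nonnegative, and $\nabla w = \mathbb{1}_{\{u>v\}}(\nabla u-\nabla v)$ a.e. So $w$ is an admissible test function in both weak formulations. We first check that a nonnegative $H^1_0$ test function is allowed in the weak inequality for $v$: the weak sense of $-\divg(G\nabla v)\geq f$ means the inequality holds against nonnegative $C_c^\infty$ functions, and by density it extends to nonnegative $H_0^1$ functions (using that $G$ is bounded and $f$ is locally bounded; if $f$ is merely in $C(\Omega)$, we work first on compactly contained subdomains or assume $f$ is bounded).

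Key observation: on $\{w>0\}$ we have $u>v\geq 0$, hence $u>0$ and $\mathbb{1}_{\{u>0\}}=1$ there. The continuity of $u$ makes $\{u>0\}$ open, but we only need this a.e. Testing the equation for $u$ with $w$ gives
\[
\int_\Omega G\nabla u\cdot\nabla w\,\mathrm{d}x=\int_\Omega f\mathbb{1}_{\{u>0\}}w\,\mathrm{d}x=\int_\Omega f w\,\mathrm{d}x,
\]
since $w$ vanishes off $\{u>0\}$. Testing the inequality for $v$ with $w$ gives
\[
\int_\Omega G\nabla v\cdot\nabla w\,\mathrm{d}x\geq \int_\Omega f w\,\mathrm{d}x.
\]
Subtracting, we obtain $\int_\Omega G\nabla(u-v)\cdot\nabla w\leq 0$. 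Because $\nabla w=\nabla(u-v)$ on the support of $\nabla w$, this reads $\int_\Omega G\nabla w\cdot\nabla w\,\mathrm{d}x\leq 0$. Uniform ellipticity then yields $\lambda\|\nabla w\|_{L^2}^2\leq 0$, so $\nabla w=0$. Since $w\in H^1_0$, Poincaré's inequality gives $w=0$, that is, $v\geq u$ a.e.

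The main obstacle is justifying $w\in H^1_0(\Omega)$ and the admissibility of $w$ as a test function. The boundary condition "$v\geq u$ on $\partial\Omega$" must be interpreted in the trace/$H^1$ sense, i.e. $(u-v)^+\in H^1_0$, and we take that as the definition. The only other point requiring care is the density step for the supersolution inequality, which is standard: approximate $w$ in $H^1$ by nonnegative smooth compactly supported functions, via mollifying truncations.
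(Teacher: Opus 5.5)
Your proof is correct, and it is the standard truncation argument; the paper does not prove the lemma itself but cites \cite[Theorem 2.6.4]{KS80}, where the same test function appears in variational-inequality form. There one inserts the competitor $u-(u-v)^+=\min(u,v)$, which is admissible because $v\geq 0$, and this gives exactly your inequality $\int_\Omega G\nabla u\cdot\nabla (u-v)^+\,\mathrm{d}x\leq\int_\Omega f\,(u-v)^+\,\mathrm{d}x$. Your version uses only the equation on $\{u>0\}$, so it proves the lemma exactly as stated. It needs only boundedness of $f$, which holds in all of the paper's applications, and it never uses smoothness of $\Omega$, which is convenient since the lemma is also applied on rectangles.
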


The following is a restatement of the nondegeneracy property for the obstacle problem, see e.g. \cite[Lemma 2.3.1]{Fri82}. 
\begin{lem}[Nondegeneracy]
	\label{lem:nondeg}
	Let $G: B_1 \rightarrow \RR^{n \times n}$ be a bounded, uniformly elliptic matrix with $C^2$ coefficients. There exists some $\kappa_0 > 0$ depending only on the dimension and the ellipticity constant of $G$ such that the following holds: If $f \in C(\overline{B_1})$, $f \leq -\kappa_0$, and $u \in C(\overline{B_1})$ is a function satisfying 
	\[
	\begin{cases}
		-\divg(G\nabla u) = f\mathbb{1}_{\{u > 0\}} & \text{ in } B_1, \\
		0 \leq u \leq 1 & \text{ in } B_1,	
	\end{cases}
	\]
	in the weak sense, then $u \equiv 0$ in $B_{1/2}$.
\end{lem}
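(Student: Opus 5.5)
We prove the nondegeneracy Lemma~\ref{lem:nondeg} with a comparison barrier, using Lemma~\ref{lem:mp}. Suppose, for contradiction, that $u(x_0) > 0$ for some $x_0 \in B_{1/2}$. The idea is to build a supersolution $w$ in a ball around $x_0$ with three properties: it satisfies $-\divg(G\nabla w) \geq f$, it is nonnegative, it dominates $u$ on the boundary, and yet $w(x_0) = 0$. Comparison then forces $u(x_0) \le 0$, which is the contradiction. Since $f \le -\kappa_0 < 0$, the operator applied to $w$ only needs to be bounded below by $-\kappa_0$, and any $w$ with $-\divg(G\nabla w) \geq -\kappa_0$ works.

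\textbf{The barrier.} Work on $B_{1/2}(x_0) \subset B_1$ and take
\[
w(x) = c\,|x-x_0|^2, \qquad c := 4,
\]
so that $w = 1 \geq u$ on $\partial B_{1/2}(x_0)$. Expanding,
\[
-\divg(G\nabla w) = -2c\,\mathrm{tr}\,G - 2c\,(\divg G)\cdot(x-x_0).
\]
This is bounded below by $-C(n,\Lambda,\|G\|_{C^1})$. Hence choosing $\kappa_0$ at least this constant gives $-\divg(G\nabla w) \geq -\kappa_0 \geq f$.

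\textbf{Applying comparison.} The function $u$ solves $-\divg(G\nabla u) = f\mathbb{1}_{\{u>0\}}$, and $w \ge 0$ with $w \ge u$ on the boundary. Lemma~\ref{lem:mp} on $B_{1/2}(x_0)$ therefore gives $u \le w$. In particular $u(x_0) \le w(x_0) = 0$, contradicting $u(x_0) > 0$. So $u \equiv 0$ in $B_{1/2}$.

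\textbf{Main obstacle: the dependence of $\kappa_0$.} The term $\divg G$ makes $\kappa_0$ depend on a $C^1$ bound of $G$, not only on the dimension and the ellipticity constant as the statement claims. There are two ways to handle this. The first is to read the statement's "bounded with $C^2$ coefficients" as implicitly carrying such a bound, which is harmless in the applications where $G = H(x) = I + O(|x|^2)$. The second, for genuine dependence only on the ellipticity constant, is to replace the explicit barrier with the classical Caffarelli nondegeneracy argument. There one compares $u$ with the solution $h$ of $-\divg(G\nabla h) = -\kappa_0$ in $B_{1/2}(x_0)$ with $h = 1$ on the boundary, and uses De Giorgi–Nash/Harnack-type estimates, which depend only on $n$ and the ellipticity, to show that $h$ becomes negative at $x_0$ once $\kappa_0$ is large. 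A rescaling argument then gives the usual quadratic growth $\sup_{B_r(x_0)} u \geq c\kappa_0 r^2$, which contradicts $u \le 1$ for large $\kappa_0$. I expect this comparison-with-$h$ step to be the only delicate point. All the remaining steps are direct.
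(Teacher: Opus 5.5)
Your proof is correct. $w=4|x-x_0|^2$ is a nonnegative supersolution with $w\ge u$ on $\partial B_{1/2}(x_0)$, so Lemma~\ref{lem:mp} gives $u(x_0)\le w(x_0)=0$. This is essentially the classical quadratic-barrier argument behind the nondegeneracy lemma the paper cites from \cite[Lemma 2.3.1]{Fri82}, which the paper does not reprove; there the barrier is subtracted from $u$ and the maximum principle is applied on $\{u>0\}$, the same mechanism in a different wrapper.

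On your concern about $\kappa_0$: its extra dependence on $\sup|G|$ and $\|\nabla G\|_\infty$ is harmless in the paper's uses. There $G=I$, or $G=H_n$ with $\nabla H_n=O(\varepsilon_n)$. Dependence beyond the ratio $\Lambda/\lambda$ is in fact unavoidable, because replacing $G$ by $tG$ amounts to replacing $f$ by $f/t$. For instance, $G=tI$ and $u=\tfrac12+\tfrac{\kappa_0}{2nt}|x|^2$ violate the literal statement for $t\ge \kappa_0/n$. So the De Giorgi--Nash route you sketch is not needed.
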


In order to prove the regularity of the free boundary in Theorem \ref{mainthm:morse}, it will suffice to prove that, for sufficiently small mass, the free boundary is a $ C^1 $ curve near maximum points, the higher regularity being given by classical methods, such as the hodograph-Legendre transform \cite[Section 2.2]{Fri82}. The next proposition is a consequence of the results in \cite{Caf77}, see also sections 2.4 and 2.5 in \cite{Fri82}.

\begin{prop}
	\label{prop:reg}
	Let $ G \in C^{3, \alpha}(B_1) $, $ f \in C^{0,\delta}(B_1) $, $\delta\in (0,1)$, such that $ f\leq -1 $ and $ \varepsilon < \frac{1}{8}$. There exist $ \rho_0 $ and $ \mu $, depending on $ \varepsilon $ and the ellipticity constant of $ G $ such that the following holds. If $ u \in C^{1,1}(B_1) $ is a solution to
	\[
		\begin{cases}
			-\divg(G\nabla u) \geq f & \text{ in } B_1, \\ 
			u\geq 0 		& \text{ in } B_1, \\
			-\divg(G\nabla u) = f	& \text{ in }\{u > 0\},
		\end{cases}	
	\]
	such that $ 0 \in \partial \{u > 0 \}$ and 
	\[\frac{|\{u = 0\} \cap B_{\rho}|}{|B_{\rho}|} \geq \varepsilon	\]
	for some $ 0 < \rho < \rho_0 $, then $ \partial \{u > 0\} \cap B_{\mu}$ is the graph of a $ C^1 $ function. 
\end{prop}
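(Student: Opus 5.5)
We reduce to Caffarelli's classical theory \cite{Caf77} by rescaling and compactness; this proposition is essentially a restatement of it, see also \cite[Sections 2.4--2.5]{Fri82}. Since $f\leq -1$, the equation reads $\divg(G\nabla u)=-f\geq 1$ in $\{u>0\}$, which is the standard sign for the obstacle problem. After the linear change of variables $x\mapsto G(0)^{-1/2}x$ we may assume $G(0)=I$. This changes $\varepsilon$, $\rho_0$ and $\mu$ only by factors depending on the ellipticity constant.

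\textbf{Step 1: uniform estimates.} For the rescalings $u_r(x)=r^{-2}u(rx)$ we have two bounds at free boundary points $x_0\in\partial\{u>0\}\cap B_{1/2}$.
\begin{itemize}
\item \emph{Quadratic growth from above:} $\sup_{B_r(x_0)}u\leq Cr^2$. This follows from the $C^{1,1}$ bound, which is uniform by the standard interior estimates for obstacle problems with $C^{3,\alpha}$ coefficients and H\"older right-hand side.
\item \emph{Nondegeneracy:} $\sup_{B_r(x_0)}u\geq cr^2$. This follows from Lemma \ref{lem:nondeg} applied to suitable rescalings, using $f\leq -1$.
\end{itemize}
Together they show that the family $\{u_r\}$ is precompact in $C^{1,\beta}_{\loc}$. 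Any limit as $r\to 0$ is a nontrivial global solution of $\Delta u_0=-f(0)\mathbb{1}_{\{u_0>0\}}$, with $-f(0)\geq 1$, $u_0\geq 0$, $0\in\partial\{u_0>0\}$. The coefficients freeze because $G\in C^{3,\alpha}$ and $f\in C^{0,\delta}$.

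\textbf{Step 2: blow-up classification and the role of $\varepsilon$.} Caffarelli's classification of global convex blow-ups says that $u_0$ is one of two types:
\begin{itemize}
\item a half-space solution $\frac{-f(0)}{2}((x\cdot e)_+)^2$, whose zero set has density $1/2$;
\item a homogeneous quadratic polynomial, whose zero set has measure zero.
\end{itemize}
The hypothesis $|\{u=0\}\cap B_\rho|\geq\varepsilon|B_\rho|$ passes to the limit by upper semicontinuity of the coincidence set. This uses nondegeneracy: points where $u_r\to0$ locally uniformly lie in the limit zero set, and conversely $\{u_0>0\}$ is approximated from inside. The passage shows that the relevant blow-ups (at scale $\rho$ and below) are not polynomial. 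The restriction $\varepsilon<1/8$ keeps the hypothesis compatible with the half-space profile.

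\textbf{Step 3: flatness implies $C^1$.} Once $u$ at scale $\rho$ is close in $C^1$ to a half-space solution, Caffarelli's argument applies. Directional derivatives $\partial_e u$ with $e$ in a cone around the normal are nonnegative near $0$, by comparison (Lemma \ref{lem:mp}) applied to $C\partial_e u-u$. Hence $\partial\{u>0\}$ is a Lipschitz graph near $0$. Iterating the cone improvement on dyadic scales, or invoking boundary Harnack, upgrades this to a $C^1$ graph in $B_\mu$.

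\textbf{Main obstacle: uniformity.} The key issue is that $\rho_0$ and $\mu$ depend only on $\varepsilon$ and the ellipticity constant. I would obtain this by a contradiction--compactness argument. Suppose there are sequences $G_k,f_k,u_k,\rho_k\to0$ for which the conclusion fails. Rescaling by $\rho_k$ and extracting a limit using Step 1 produces a global solution whose zero-set density in $B_1$ is at least $\varepsilon$. By Step 2 it is then a half-space solution. The quantitative closeness to this half-space solution then feeds Step 3 uniformly, giving a contradiction. The delicate point is the measure-theoretic stability of the density condition along such sequences, which again rests on nondegeneracy.
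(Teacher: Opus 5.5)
Your overall route matches the paper's, which gives no argument beyond citing \cite{Caf77} and \cite[Sections 2.4--2.5]{Fri82}. However, your sketch of Caffarelli's argument breaks at Step 2 and in the compactness paragraph. The limit $u_\infty$ of $\rho_k^{-2}u_k(\rho_k x)$ is a limit of \emph{different} solutions at a fixed relative scale, not a blow-up of one solution at a point. It is a global solution but need not be homogeneous, so the half-space/polynomial dichotomy does not apply to it. (Also, the density hypothesis at the single scale $\rho$ says nothing directly about blow-ups as $r\to0$.) Concretely, take $G=I$, $f=-1$ and $u(x)=\phi(|x-ae_1|)$, with $\phi(r)=\frac{r^2-a^2}{4}-\frac{a^2}{2}\log\frac{r}{a}$ for $r\geq a$ and $\phi=0$ otherwise. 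Then $0\in\partial\{u>0\}$ and the zero set has density $1/4>\varepsilon$ in $B_{2a}$. With $\rho=2a$, the rescaling $\rho^{-2}u(\rho x)$ is the same solution for every $a$: its coincidence set is the disk $B_{1/2}(e_1/2)$, so it is not a half-space solution.

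The missing idea is Caffarelli's convexity of global solutions. The set $\{u_\infty=0\}\cap B_1$ is convex with measure at least $\varepsilon|B_1|$, hence contains a ball $B_{c(\varepsilon)}(z)$. Convexity of $u_\infty$ along lines through that ball gives $\partial_e u_\infty\geq u_\infty/2$ near $0$, for all $e$ in a cone of directions pointing from that ball towards $0$, whose opening depends only on $\varepsilon$. This is the cone monotonicity your Step 3 needs, obtained without any flatness. It transfers to $u_k$ via the lemma ``$C\partial_e u-u\geq-\eta$ in $B_1$ implies $C\partial_e u-u\geq 0$ in $B_{1/2}$''. That lemma is a maximum-principle argument inside $\{u>0\}$ using the differentiated equation, not Lemma \ref{lem:mp}. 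It also needs an extra perturbation step when $G$ varies and $f$ is only H\"older.

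Second, a compactness argument at scale $\rho$ only yields information in balls of radius comparable to $\rho$, and nothing better is true. In the example above, once $2a<\mu$ the whole circle $\partial B_a(ae_1)$ lies in $B_\mu$, so $\partial\{u>0\}\cap B_\mu$ is not a graph. What the argument (and Caffarelli's theorem) actually gives is a $C^1$ graph in $B_{\mu\rho}$. This is all the paper needs, since its regularity argument in Section \ref{ssect:quadratic} applies the proposition at a fixed scale, but your proof should aim at that version. Finally, your extraction and coefficient freezing require uniform bounds on $\|f\|_{C^{0,\delta}}$ and on a modulus of continuity of $G$. So the constants you obtain depend on these as well, not only on $\varepsilon$ and the ellipticity constant.
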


Throughout this article, $ C $ denotes a positive constant, whose value may change from line to line. Moreover, $ M $ (resp. $ M_n $) will denote the total mass of the solution $ u $ to \eqref{eq:obstacle_gamma} (resp. $ u_n $) while $ m $ will denote the mass in a neighborhood of a point. 

\section{Maxima described by a positive homogeneous function}
\label{sect:hom}

\subsection{Preliminary computations}

Throughout this section, we will let the assumptions of Theorem \ref{mainthm:isolated_hom} hold. Without loss of generality, we will assume that $ \Theta = B_R $, so that \eqref{eq:hyp_hom} becomes
\[g(x) = \gmax - f(x) + r(x) \faskip \forall x\in B_R.\]
Up to making $ R $ smaller, we can assume
\begin{equation}
\label{eq:hypothesis_nondeg}
	g(x) \leq \gmax - \frac{1}{2} f(x) \faskip \forall x\in B_R.
\end{equation}
Furthermore, we will assume that \eqref{eq:metric} holds in this coordinate system.

Throughout this section, $M_n$ will denote a sequence of positive masses such that $ M_n\rightarrow 0 $, and $(u_n, \alpha_n, \xi_n)$ will denote the solutions to \eqref{eq:obstacle_gamma} with mass $M_n$. 

In $B_{R}$, the right-hand side of \eqref{eq:obstacle_gamma} may be written as
\begin{align*}
	 (1+\alpha_n) g - 1  &=  (1 + \alpha_0 + (\alpha_n - \alpha_0))g - 1  \\
		      &=  \left(\frac{1}{\gmax} + (\alpha_n - \alpha_0)\right)\left( \gmax - f(x) + r(x) \right)  - 1  \\
		      &=   (\alpha_n - \alpha_0) \gmax - \left(\frac{1}{\gmax} + (\alpha_n - \alpha_0)\right) f(x) + \left(\frac{1}{\gmax} + (\alpha_n - \alpha_0)\right) r(x) .
\end{align*}
We will denote $\beta_n := \alpha_n - \alpha_0$. By Theorem \ref{thm:known} (1), we know that $\beta_n \searrow 0$.

Now, by Theorem \ref{thm:known} (3), by taking $ n $ large, we may assume $u_n \equiv 0$ on $ \partial B_R $. Define
\[m_n := \int_{B_R} u_n \mu(x)\mathrm{d}x ,\]
where $\mu(x) = \sqrt{\mathrm{det}(H(x))}$ is the Riemannian volume of $ \Gamma $. We will define the rescalings
\begin{equation}
\label{eq:nondeg_rescaling}
u_n = m_n^{(\gamma + 2)/(\gamma + 4)} \gmax^{(\gamma + 4)/\gamma}U_{n} (\gmax^{-2/\gamma} m_n^{-1/(\gamma + 4)} x),
\end{equation}
where $U_n$ is a function defined in $B_{m_n^{-1/6}R}$ and is extended to a function in $H^{1}(\RR^{2})$ by zero. Denote $ \varepsilon_n = m_n^{1/(\gamma + 4)} $ and $R_n := \varepsilon_n^{-1}R$ .

Computing
\begin{align*}
	m_n = \int_{B_R} u_n \mu(x) \mathrm{d}x &= m_n^{(\gamma + 2)/(\gamma + 4)}\int_{B_R} U_n(\gmax^{-2/\gamma} \varepsilon_n^{-1} x) \sqrt{\det (H(x))} \mathrm{d}x \\
					  &= m_n \gmax \int_{B_{R_n}} U_n(y) \sqrt{\det H(\varepsilon_n y)} \mathrm{d}y,
\end{align*}
we obtain $\| U_n\|_{L^{1}(\RR^{2},\mu_n)} = \gmax^{-1}$, where we denote $ \mu_n(y) = \sqrt{\det H(\gmax^{2/\gamma} \varepsilon_n y)} $. By \eqref{eq:metric}, we have that $ \mu_n \rightarrow 1$ uniformly in compact sets. 

Moreover, we can express the right hand side of \eqref{eq:obstacle_gamma} in the $ y = \varepsilon_n^{-1} x$ variable
\begin{align*}
	 (1+\alpha_n) g - 1  &= \beta_n \gmax - \left(\frac{1}{\gmax} + \beta_n\right) f(\gmax^{2/\gamma} \varepsilon_n y) + \left(\frac{1}{\gmax} + \beta_n\right) r(\gmax^{2/\gamma} \varepsilon_n y) \\
			     &= \gmax \varepsilon_n^{\gamma}\left( \frac{\beta_n}{\varepsilon_n^{\gamma}} - \left(1 + \frac{\beta_n}{\gmax}\right) f(y) + \left(\frac{1}{\gmax} + \beta_n\right) \frac{r(\varepsilon_n y)}{\gmax \varepsilon_n^{\gamma}}  \right). 
 \end{align*}
Note that, for $ |y| \leq T $, we have
\[
	\left| \frac{\beta_n}{\gmax} f(y) + \left(\frac{1}{\gmax} + \beta_n\right) \frac{r(\varepsilon_n y)}{\gmax\varepsilon_n^{\gamma}}\right| \leq \frac{\beta_n}{\gmax} \|f\|_{L^{\infty}(B_T)} + \frac{2T^{\gamma}}{\gmax^2}\frac{|r(\varepsilon_n y)|}{|\varepsilon_n y|^{\gamma}} \rightarrow 0 \text{ as }n\rightarrow \infty, 
\]
so that we may define
\[r_n (y)= \frac{\beta_n}{\gmax} f(y) + \left(\frac{1}{\gmax} + \beta_n\right) \frac{r(\varepsilon_n y)}{\gmax \varepsilon_n^{\gamma}}\]
and we have $r_n \rightarrow 0$ uniformly in compact sets. We will further set $ \gmax = 1 $ since the precise value of this parameter plays no role at this stage in our analysis.

Hence, the equation satisfied by $ U_n $ may be written as
\begin{equation}
	\label{eq:U}
	-\divg(H(\gmax^{2/\gamma}\varepsilon_n^{1/6} y)\nabla U_n(y)) = \left( \frac{\beta_n}{\varepsilon_n^{\gamma}} - f(y) + r_n(y) \right)\mathbb{1}_{\{U_n > 0\}}.
\end{equation}  

We will denote $H_n(y) = H(\gmax^{2/\gamma}\varepsilon_n y)$. By \eqref{eq:metric}, for any compact set $K \subset \RR^{2}$ and any function $ \varphi \in C^{2}(K) $, we have 
\begin{equation}
\label{eq:approx_diff}
	-\divg(H_n \nabla \varphi) = -\Delta \varphi + o(1).
\end{equation}
Finally, if $\lambda > 0$ is a lower bound for the ellipticity constant of $H$ in $B_R$, then $\lambda$ is also a lower bound for the ellipticity constant of $H_n$ in $B_{R_n}$.

\subsection{The limit problem}
\label{sect:limit_prob}

In view of \eqref{eq:U} and \eqref{eq:approx_diff}, if $ \frac{\beta_n}{\varepsilon_n^{\gamma}}\rightarrow \overline{\alpha} $ and $ (U_n)_{n\in\NN} $ converges in a strong sense, its limit should satisfy
\begin{equation}
\label{eq:obstacle_f}
	\begin{cases}
		-\Delta v = \left(\overline{\alpha} - f(y)  \right) \mathbb{1}_{\{v > 0\}}, \\
		v \geq 0, v\in L^{1}(\RR^{2}).
	\end{cases}
\end{equation}

We will first show that this problem has a unique solution.
\begin{prop}
	\label{prop:existence}
	Given $ \overline{\alpha} > 0 $, there exist a unique weak solution $ v \in H^{1}(\RR^{2}) $ to \eqref{eq:obstacle_f}.
\end{prop}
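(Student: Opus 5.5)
Fix $\overline{\alpha} > 0$. The solution will live in a bounded region, because $\overline{\alpha} - f(y) \leq -1$ as soon as $f(y) \geq \overline{\alpha}+1$. By homogeneity and positivity of $f$ on the unit circle, $f(y) \geq c_f |y|^{\gamma}$ with $c_f = \min_{|e|=1} f(e) > 0$. So the right-hand side is $\leq -1$ outside $B_{R_0}$, where $R_0 = ((\overline{\alpha}+1)/c_f)^{1/\gamma}$. I would reduce the problem on $\RR^2$ to an obstacle problem on a bounded ball, in three steps.

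\textbf{Existence.} For $R > 2R_0$ let $v_R \in H^1_0(B_R)$ be the unique minimizer of
\[
J_R(w) = \int_{B_R} \frac{|\nabla w|^2}{2} - (\overline{\alpha} - f)w \,\mathrm{d}y
\]
over nonnegative $w \in H^1_0(B_R)$. The Proposition on equivalent formulations shows that $v_R$ is a continuous solution of the obstacle problem in $B_R$. To bound it uniformly in $R$, I compare with the explicit supersolution $W(y) = \frac{\overline{\alpha}}{4}(R_1^2 - |y|^2)_+$, using Lemma \ref{lem:mp}. Here $W$ is a positive multiple of the truncated paraboloid, and one checks $-\Delta W \geq (\overline{\alpha} - f)$ in the weak sense; this holds because the kink of $W$ on $\partial B_{R_1}$ contributes a positive measure and $\overline{\alpha}-f<0$ outside $B_{R_0}$ once $R_1 \geq R_0$. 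This gives $0 \leq v_R \leq C$ with $C$ independent of $R$.

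Next I apply Lemma \ref{lem:nondeg} on unit balls $B_1(y_0)$ with $|y_0| > R_0 + 1$, after rescaling so that $f - \overline{\alpha} \geq \kappa_0$ and the sup-bound is normalized. I rescale the ball $B_\rho(y_0)$ by $w(z) = v_R(y_0+\rho z)/C$: the equation becomes $-\Delta w = \rho^2 C^{-1}(\overline{\alpha}-f)$, and $\rho^2 C^{-1}(f-\overline{\alpha})$ becomes $\geq \kappa_0$ once $|y_0|$ is large, since $f$ grows. This shows $v_R \equiv 0$ outside a fixed ball $B_{R_2}$ independent of $R$. Then $v_R$ is a compactly supported solution on $\RR^2$, hence lies in $H^1 \cap L^1$. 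The same support bound shows $v_R$ does not depend on $R$: any two such solutions for $R, R'$ both minimize $J_{R''}$ on the larger ball.

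\textbf{Uniqueness.} This is the main obstacle, since a priori an $H^1(\RR^2)$ solution need not have compact support. Let $v$ be any weak solution in $H^1(\RR^2)$ with $v \in L^1$. First I show $v$ is bounded and compactly supported. Boundedness on balls should follow from local $W^{2,p}$/$C^{1,\alpha}$ estimates, because the right-hand side is locally bounded and $v \in H^1$. Decay comes from $v \in H^1(\RR^2)$: the mean of $v$ on $B_1(y_0)$ tends to $0$, and since $v$ is subharmonic plus bounded error on unit balls (as $-\Delta v \leq \overline{\alpha}$), we get $\sup_{B_{1/2}(y_0)} v \to 0$ as $|y_0| \to \infty$. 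Then the nondegeneracy Lemma \ref{lem:nondeg} applies on balls far out and gives compact support.

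Once both solutions $v_1, v_2$ are supported in some $B_R$, I apply Lemma \ref{lem:mp} on $B_{R+1}$ in both directions. Each $v_i$ satisfies $-\Delta v_i \geq \overline{\alpha} - f$ globally, which is part of the equivalent formulation, and $v_1 = v_2 = 0$ on the boundary. Hence $v_1 \leq v_2$ and $v_2 \leq v_1$, so $v_1 = v_2$. If the global inequality is not directly available from the weak formulation as stated, I would instead use the variational characterization: both are minimizers of the strictly convex $J_{R+1}$ on nonnegative $H^1_0(B_{R+1})$, so they coincide.
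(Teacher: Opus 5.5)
There is a genuine gap in the existence part, at the uniform $L^\infty$ bound. Your barrier $W(y)=\frac{\overline{\alpha}}{4}(R_1^2-|y|^2)_+$ is not a supersolution. Truncating a concave function at zero produces a convex kink, so the singular part of $-\Delta W$ on $\partial B_{R_1}$ is negative, not positive. Indeed, for $\varphi\ge 0$,
\[
\int_{\RR^2}\nabla W\cdot\nabla\varphi\,\mathrm{d}y=\overline{\alpha}\int_{B_{R_1}}\varphi\,\mathrm{d}y-\frac{\overline{\alpha}R_1}{2}\int_{\partial B_{R_1}}\varphi\,\mathrm{d}S .
\]
Testing with $\varphi$ concentrated in a thin annulus around $\partial B_{R_1}$ shows that $-\Delta W\ge\overline{\alpha}-f$ fails weakly, so Lemma \ref{lem:mp} cannot be invoked.

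The conclusion $v_R\le W$ is in fact false for admissible choices. Take $f(y)=|y|^2/2$ and $\overline{\alpha}=4$, so $R_0=\sqrt{10}$. The global solution is $16\Phi(y/2)$ with $\Phi$ from \eqref{eq:solution_radial}, supported in $\overline{B_4}$. It coincides with $v_R$ for $R>4$ and is positive on $\sqrt{10}<|y|<4$, where $W=0$ if $R_1=R_0$. Your rescaled nondegeneracy argument and the $R$-independent support bound both rest on this $L^\infty$ bound, so existence is not established as written.

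The repair is a barrier with no kink, i.e.\ one that is $C^{1,1}$ across its free boundary. This is why the paper, in Proposition \ref{prop:nondeg_infty_bd}, uses a rescaled obstacle solution as barrier. For instance, with $c_f=\min_{\partial B_1}f$, choose $r_*$ by $c_f r_*^\gamma=\frac{(\gamma+2)\overline{\alpha}}{2}$ and set
\[
\tilde W(y)=\Bigl(\frac{\gamma\overline{\alpha}}{4(\gamma+2)}r_*^2-\frac{\overline{\alpha}}{4}|y|^2+\frac{c_f}{(\gamma+2)^2}|y|^{\gamma+2}\Bigr)\mathbb{1}_{B_{r_*}}(y).
\]
Then $\tilde W\ge 0$, $\tilde W=\partial_r\tilde W=0$ on $\partial B_{r_*}$, and $-\Delta\tilde W=(\overline{\alpha}-c_f|y|^\gamma)\mathbb{1}_{B_{r_*}}\ge\overline{\alpha}-f$ on all of $\RR^2$. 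Outside $B_{r_*}$ this holds because $f\ge c_f r_*^\gamma\ge\overline{\alpha}$ there. Lemma \ref{lem:mp} then gives $0\le v_R\le\tilde W$ for $R\ge r_*$. This yields the uniform bound and $\supp v_R\subset\overline{B_{r_*}}$ at once, so the nondegeneracy step becomes unnecessary.

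With this fix, your route is a valid alternative to the paper's. You solve directly at the prescribed $\overline{\alpha}$ on large balls. The paper instead minimizes under the mass constraint on $\RR^2$ (Nash, tightness, Prokhorov) and then rescales using the homogeneity of $f$. Your uniqueness argument is sound: you prove compact support first, then use comparison or strict convexity. It establishes up front the compact support that the paper's shorter argument (testing each variational inequality against the other solution) invokes.
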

\begin{proof}
	First, let us prove uniqueness. Let $v_1$, $v_2$ be two solutions to \eqref{eq:obstacle}. The weak formulation of \eqref{eq:obstacle} reads	
\[
	\int_{\RR^{2}} \nabla v_i \cdot \nabla (\varphi - v_i) + (f - \overline{\alpha}) (\varphi - v_i) \mathrm{d}x \geq 0 \faskip \forall \varphi \in H_0^{1}(\RR^{2}), ~\varphi \geq 0.
\]
Testing $ v_1 $ and $ v_2 $ with each other yields
\begin{align*}
	\int_{\RR^{2}} |\nabla (v_1 - v_2)|^{2} &= -\left( \int_{\RR^{2}} \nabla v_1 \cdot \nabla (v_2 - v_1) + \nabla v_2 \cdot \nabla(v_1 - v_2) \right) \\
						&\leq - \left( \int_{\RR^{2}} (f - \overline{\alpha}) (v_2 - v_1) + (f - \overline{\alpha}) (v_1 - v_2) \right)  = 0,
\end{align*}
thus, $\nabla(v_1 - v_2) \equiv 0$, which means that $v_1 - v_2$ is constant. This implies that $v_1 \equiv v_2$ since both have compact support.

	For the existence, we will show that solving \eqref{eq:obstacle_f} is equivalent to solving a minimization problem of the form
\begin{equation}
\label{eq:min}
	\min \left\{ J(v) := \int_{\RR^{2}} \frac{|\nabla v|^{2}}{2} + f v \mathrm{d}y: v\in H^{1}(\RR^{2}),~ v\geq 0, ~ \int_{\RR^{2}}v = 1  \right\}.
\end{equation}

Let us show that \eqref{eq:min} has a minimizer. Indeed, fix any $ \varphi \in C^{1}_c(B_1) $, $ \varphi \geq 0 $, $ \varphi \equiv 1 $ in $ B_{1/2} $, and we have that
\[w = \frac{1}{\int_{\RR^{2}}\varphi \mathrm{d}y} \varphi\]
	is an admissible test function. 

	Take a minimizing sequence $ (v_n)_{n\in\NN} $. Therefore, we have that $ J(v_n) \geq C $. In particular, $ \|\nabla v_n \|_{L^{2} (\RR^{2})} \leq C $. Moreover, by Nash's inequality, we have
	\[\|v_n\|_{L^{2}(\RR^{2})}^{4} \leq C \|\nabla v_n \|_{L^{2}(\RR^{2})} \| v_n \|_{L^{1}(\RR^{2})} \leq C.\]
	Thus, $ (v_n)_{n\in \NN} $ is a bounded sequence in $ H^{1}(\RR^{2}) $. By the Banach-Alaoglu theorem, there exists some $ \overline{v} \in H^{1}(\RR^{2}) $ and a subsequence of $ (v_n) $ such that $ v_n \rightharpoonup \overline{v} $ weakly in $ H^{1} (\RR^2)$. In each smooth compact set we can apply the Rellich-Kondrachov theorem to extract a further subsequence such that $ v_n \rightarrow \overline{v} $ in $ L^{2}_{\loc}(\RR^{2}) $ and a.e. in $ \RR^{2} $.

	We want to show that $ \overline{v} $ is admissible. Clearly, $ \overline{v} \geq 0 $, so it remains to prove that $ \int_{\RR^{2}} \overline{v} \mathrm{d}y = 1  $. To verify this, note that $ \nu_n =  v_n \mathrm{d}x $ are probability measures on $ \RR^{2} $. Moreover, the family $ (\nu_n)_{\{n\in \NN\}} $ is tight For any $ R > 0 $, we have
	\begin{align*}
		\int_{\RR^{2}} f v_n \mathrm{d}y &= \int_{\RR^{2}\setminus B_R} f v_n \mathrm{d}y + \int_{B_R} f v_n \mathrm{d}y \\
						    &\geq \int_{\RR^{2}\setminus B_R} |y|^{\gamma} f \left( \frac{y}{|y|} \right)  v_n \mathrm{d}y \\
						    &\geq R^{\gamma} \min_{\partial B_1} f \int_{\RR^{2}\setminus B_R} v_n \mathrm{d}y \\
						    &= R^{\gamma} \min_{\partial B_1}f \nu_n(\RR^{2}\setminus	B_R).
	\end{align*}
	Recalling that $ \int_{\RR^{2}} f(y) v_n \mathrm{d}y \leq J(v_n) \leq C $ we conclude
	\[\nu_n (\RR^{2}\setminus B_R) \leq \frac{C}{R^{\gamma} \min_{\partial B_1}f},\]
	which implies that $ (\nu_n) $ is tight. Prokhorov's theorem \cite[Theorem 8.6.1]{Bog07} implies that there exists a probability measure $ \nu $ such that $ \nu_n \rightharpoonup \nu $ in the weak sense of measures, up to a subsequence. But by testing with smooth functions with compact support, we readily verify that $ \nu = \overline{v} \mathrm{d}x$. This implies that the whole sequence $ (\nu_n) $ converges to $ \nu $ and that $ \|\overline{v} \|_{L^{1}(\RR^{2})} = 1$, and therefore $ \overline{v}  $ is admissible.

	The weak lower semicontinuity of $ u \mapsto \| \nabla u \|^{2}_{L^{2}(\RR^{2})} $ in $ H^{1}(\RR^{2}) $ yields
	\[\liminf_{n\rightarrow \infty}\int_{\RR^{2}} \frac{|\nabla v_n |^{2}}{2} \mathrm{d}y \geq \int_{\RR^{2}} \frac{|\nabla \overline{v}|^{2}}{2} \mathrm{d} y.  \]
	On the other hand, Fatou's lemma yields
	\[\liminf_{n\rightarrow \infty}\int_{\RR^{2}} f v_n \mathrm{d}y \geq \int_{\RR^{2}} f \overline{v} \mathrm{d}y.  \]
We conclude that $ \overline{v} $ is a minimizer of \eqref{eq:min}.

Now, we will show that the Euler-Lagrange equation of \eqref{eq:min} is precisely \eqref{eq:obstacle_f}. Take $\varphi \in C_c^1(\RR^2)$ a nontrivial nonnegative function, and $\delta >0$. Define
\[v_{\delta} := \frac{v + \delta \varphi}{\int_{\RR^2} v+ \delta \varphi \mathrm{d}y} = \frac{1}{1 + \delta \int_{\RR^2} \varphi \mathrm{d}y} (v+\delta \varphi),\]
which is admissible for \eqref{eq:min}. By the fact that $u$ is a minimizer of \eqref{eq:min}, we have $J(u_{\delta}) \geq J(u)$. On the other hand,
\begin{align*}
J(v_{\delta}) - J(v) =& \int_{\RR^2} \frac12\left( \frac{1}{\left( 1+ \delta \int_{\RR^2} \varphi \mathrm{d}y\right)^2}|\nabla v_{\delta}|^2 - |\nabla v|^2 \right) + f\left(\frac{1}{1 + \delta \int_{\RR^2} \varphi \mathrm{d}y} (v+\delta \varphi) - v\right)\mathrm{d}y \\
=& \int_{\RR^2} \frac12\left( \left( \frac{1}{\left( 1 + \delta \int_{\RR^2} \varphi \mathrm{d}y\right)^2}- 1\right) |\nabla v|^2 +  \frac{\delta}{\left( 1+ \delta \int_{\RR^2} \varphi \mathrm{d}y\right)^2} (2 \nabla v \cdot \nabla \varphi + \delta |\nabla \varphi|^2) \right) \\ & + f\left(\left(\frac{1}{1 + \delta \int_{\RR^2} \varphi \mathrm{d}y} - 1 \right)v + \frac{\delta}{1 + \delta \int_{\RR^2} \varphi \mathrm{d}y} \varphi\right)~ \mathrm{d}y. 
\end{align*}

Recalling that
\begin{align*}
\frac{1}{1 + \delta \int_{\RR} \varphi \mathrm{d}y} &= 1 - \delta \int_{\RR} \varphi \mathrm{d}y + o(\delta), \\
\frac{1}{\left(1 + \delta \int_{\RR} \varphi \mathrm{d}y\right)^2} &= 1 - 2\delta \int_{\RR^2} \varphi \mathrm{d}y + o(\delta),
\end{align*}
as $\delta \rightarrow 0$, we get
\begin{align*}
J(v_{\delta})-J(v)= \delta \int_{\RR^2} \nabla v \cdot \nabla\varphi  + f\varphi \mathrm{d}y - \delta \left(\int_{\RR^2} \varphi \mathrm{d}y	\right) \left(\int_{\RR^2} |\nabla v|^2 - f v \mathrm{d}y\right)  + o(\delta).
\end{align*}

Dividing by $\delta$ and letting $\delta \searrow 0$, we have
\[\int_{\RR^2} \nabla v \cdot \nabla\varphi  + f \varphi \mathrm{d}y \geq \left(\int_{\RR^2} \varphi \mathrm{d}y\right) \left(\int_{\RR^2} |\nabla v|^2 - f \mathrm{d}y\right),\]
which can be rewritten, defining
\begin{equation}
	\label{eq:alpha}
	\overline{\alpha} := \int_{\RR^2} |\nabla v|^2 - fv \mathrm{d}x,
\end{equation}
as 
\[
\int_{\RR^2} \nabla v \cdot \nabla\varphi  +( f - \overline{\alpha}) \varphi \mathrm{d}y \geq 0 \faskip \forall \varphi \in C^2_{c}(\RR^2),~\varphi \geq 0,
\]
which is precisely the weak formulation of \eqref{eq:obstacle_f}. 

Finally, define
\begin{equation}
	\label{eq:v_rescale}
	v_{\overline{\alpha}}(y) = \overline{\alpha}^{1 + 2/\gamma} \overline{v}\left( \frac{y}{\overline{\alpha}^{1/\gamma}} \right),  
\end{equation}
and $ v_{\overline{\alpha}} $ solves \eqref{eq:obstacle_f}.
\end{proof}

Using classical regularity estimates for elliptic equations see \cite[Theorem 9.11]{GT83}, we see that the solution to \eqref{eq:obstacle_f} is in $ W^{2,p}_{\loc}(\RR^{2}) $ for any $ p\in [1,\infty)$, since the right hand side of \eqref{eq:obstacle_f} is locally bounded. In particular, $ v $ is continuous.

\begin{prop}
	\label{prop:compact}
	Let $v$ be the unique weak solution to \eqref{eq:obstacle_f}. Then, $v$ has compact support.
\end{prop}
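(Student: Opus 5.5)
The plan is to apply the nondegeneracy Lemma \ref{lem:nondeg} on unit-scale balls far from the origin. There the right-hand side $\overline{\alpha} - f(y)$ is very negative, since $f(y) = |y|^{\gamma} f(y/|y|) \geq |y|^{\gamma}\min_{\partial B_1} f$ and $\min_{\partial B_1} f>0$ by hypothesis 1 and continuity. So $\overline{\alpha} - f \leq -\kappa_0$ on $\RR^2\setminus B_{R_0}$ once $R_0$ is large. The remaining ingredient is a smallness bound for $v$ at infinity, namely $\sup_{B_1(y_0)} v \leq 1$ for $|y_0|$ large. With both in hand, Lemma \ref{lem:nondeg} with $G = I$, applied to $v(y_0+\cdot)$ on $B_1$, gives $v\equiv 0$ on $B_{1/2}(y_0)$ for all $|y_0|$ large, so $\supp v$ is bounded.

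To get the smallness, I would first note that $v$ is subharmonic wherever $|y|\geq R_0$. Indeed, $-\Delta v = (\overline{\alpha}-f)\mathbb{1}_{\{v>0\}} \leq 0$ there, in the weak sense, since $\mathbb{1}_{\{v>0\}}\geq 0$ and $\overline{\alpha}-f<0$; this is rigorous because $v\in W^{2,p}_{\loc}$. By the mean value inequality for subharmonic functions, for $|y_0|\geq R_0+2$ and $y\in B_1(y_0)$,
\[
v(y) \leq \frac{1}{|B_1|}\int_{B_1(y)} v \,\mathrm{d}z \leq \frac{1}{\pi}\int_{\RR^2\setminus B_{|y_0|-2}} v\,\mathrm{d}z .
\]
Since $v\in L^1(\RR^2)$, the right-hand side tends to $0$ as $|y_0|\to\infty$, so it is $\leq 1$ for $|y_0|$ large. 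This is exactly the required smallness.

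The step needing care is that Lemma \ref{lem:nondeg} requires $f\leq -\kappa_0$ on $B_1$ with $u\leq 1$. Here the relevant function is $\overline{\alpha}-f(y_0+\cdot)$, which on $B_1$ is at most $\overline{\alpha} - (|y_0|-1)^{\gamma}\min_{\partial B_1} f \leq -\kappa_0$ for $|y_0|$ large, and $\kappa_0$ depends only on dimension since $G=I$. Both conditions therefore hold simultaneously for $|y_0|\geq R_1$, which gives $v=0$ outside $B_{R_1+1}$.

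If one prefers not to rely on $L^1$ integrability, for example when working directly with the $H^1$ solution from Proposition \ref{prop:existence}, the same mean value argument works with $L^2$ norms. Since $v\in H^1\subset L^2$, the estimate $v(y)^2 \leq \frac{1}{\pi}\int_{B_1(y)} v^2$ also tends to $0$, because $v^2$ is subharmonic where $v$ is. I expect the main point of care to be justifying the subharmonicity and the pointwise mean value inequality, which follow from continuity of $v$ and the $W^{2,p}_{\loc}$ regularity noted before the proposition.
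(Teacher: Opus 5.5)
Your proof is correct and follows essentially the same route as the paper: subharmonicity of $v$ outside a large ball, the mean value inequality to control $\sup_{B_1(y_0)} v$ by an $L^1$ quantity, and then Lemma \ref{lem:nondeg} with $G=I$ on unit balls far from the origin. The only difference is cosmetic. The paper normalizes by $\|v\|_{L^1(\RR^2)}$: it chooses $R$ so that $\overline{\alpha}-f\le -\|v\|_{L^1}\kappa_0$ and applies the lemma to $v(x_0+\cdot)/\|v\|_{L^1}$. You instead use the decay of the $L^1$ tail to obtain $v\le 1$ directly.
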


\begin{proof}
	Let $\kappa_0 > 0$ be the constant from Lemma \ref{lem:nondeg}, and let $R > 1$ be such that 
	\begin{equation}
	\label{eq:rhs}
	\overline{\alpha} - f(x) \leq -\|v\|_{L^{1}(\RR)} \kappa_0 \faskip \forall x \in \RR^{2}\setminus B_{R/2},
	\end{equation}
	which exists by homogeneity and the fact that $ f(x) > 0 $ for every nonzero $ x $. In particular, $v$ is subharmonic in $ \RR^{2}\setminus B_{R/2} $, whence
	\[ \sup_{B_1(x_0)} v \leq \int_{B_2(x_0)} v(x) \mathrm{d}x\leq \|v\|_{L^{1}(\RR^{2})} \faskip \forall x \in \RR^{2}\setminus B_{R+1}, \]
by the mean value property and the fact that $v\geq 0$. 

	For any $x_0\in \RR^{2}\setminus B_{R+1}$, we can apply Lemma \ref{lem:nondeg} to $ v(x) = \frac{v(x_0 + x)}{\|v\|_{L^{1}(\RR^{2})}} $ to get that $v \equiv 0$ in $B_{1/2}(x_0)$. Thus, the support of $v$ is contained in $B_{R+1}$.
\end{proof}

\subsection{Convergence}
\label{sect:hom_subseq}

In this section we prove the convergence of $ U_n $ as defined in \eqref{eq:nondeg_rescaling}, first up to a subsequence, then along the whole parameter $ m\rightarrow 0 $.

\begin{prop}
	\label{prop:nondeg_q_bound}
	$ \limsup_{n\rightarrow \infty} \frac{\beta_n}{\varepsilon_n^{\gamma}} < \infty$
\end{prop}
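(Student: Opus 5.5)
We argue by contradiction: suppose that, along a subsequence, $q_n := \beta_n/\varepsilon_n^{\gamma} \to \infty$. The idea is that a large constant positive source $q_n$ in \eqref{eq:U} forces $U_n$ to carry far more mass than the normalization $\|U_n\|_{L^1(\mu_n)} = 1$ allows. The comparison principle, Lemma \ref{lem:mp}, will make this quantitative.

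The first step is a lower barrier. On the ball $B_{\rho_n}$ with $\rho_n := (q_n/4)^{1/\gamma}$, homogeneity gives $f(y) \le \rho_n^{\gamma}\max_{\partial B_1} f$ there. After normalizing $f$ (by rescaling $y$ we may assume $\max_{\partial B_1} f \le 1$), and since $r_n$ is small relative to $q_n$, the source in \eqref{eq:U} satisfies
\[
q_n - f + r_n \ge \frac{q_n}{2} \qquad \text{on } B_{\rho_n}.
\]
Here one must check that $r_n = o(1)+o(|y|^{\gamma})$ uniformly on $B_{\rho_n}$. This holds as long as $\varepsilon_n \rho_n \to 0$, i.e.\ $\beta_n \to 0$, which is true by Theorem \ref{thm:known}. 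We may also need $B_{\rho_n} \subset B_{R_n}$. If instead $\varepsilon_n \rho_n$ is not small, we replace $\rho_n$ by $\min(\rho_n, c\varepsilon_n^{-1})$ for a small $c$, and the argument below still goes through.

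The second step rules out $U_n \equiv 0$ on $B_{\rho_n}$. On $B_{\rho_n}$ the function $U_n$ cannot vanish identically, since the variational inequality $-\divg(H_n\nabla U_n) \ge q_n - f + r_n > 0$ would fail there. More usefully, I compare on a smaller ball $B = B_{\rho_n/2}(y_0)$ containing a positivity point. On $\{U_n>0\}\cap B$ we have $-\divg(H_n \nabla U_n) \ge q_n/2$. A nondegeneracy-type estimate, i.e.\ the classical growth $\sup_{B_r(y_0)} U_n \ge c\, q_n r^2$ for $y_0 \in \overline{\{U_n>0\}}$, obtained by comparing $U_n$ with $w(y) = U_n(y) - \tfrac{q_n}{8\Lambda}|y-y_0|^2$ via the maximum principle, gives $\sup_{B_r(y_0)} U_n \ge c q_n r^2$ for all $r \le \rho_n/2$.

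The third step converts the supremum bound into an $L^1$ bound. Combining the quadratic growth with a mean-value-type bound for subsolutions (the source is bounded above by $q_n$, so $U_n + \tfrac{q_n}{4\lambda}|y-y_1|^2$ is an $H_n$-supersolution, and local maximum principle estimates apply) yields
\[
\int_{B_{\rho_n}} U_n \gtrsim q_n \rho_n^{4} \sim q_n^{1+4/\gamma} \to \infty.
\]
This contradicts $\int U_n \mu_n = 1$ together with $\mu_n \ge c > 0$.

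The main obstacle is this third step. Quadratic growth at one point gives a large supremum, but not directly a large integral. I would first try the Harnack / weak Harnack inequality for the nonnegative supersolution $U_n - \tfrac{q_n}{C}|y-y_0|^2 + C' q_n r^2$. Failing that, I would use a cleaner alternative. Take the radial comparison function $\phi(y) = \tfrac{q_n}{8\Lambda}\bigl(r^2 - |y-y_0|^2\bigr)_+$ on a ball where $U_n>0$, and test the equation of $U_n$ against the nonnegative first eigenfunction-like cutoff $\psi \in C_c^2(B_{\rho_n})$ with $\psi \ge 0$ and $\int\psi = 1$ at scale $\rho_n$. Integration by parts gives
\[
\int U_n (-\divg(H_n\nabla\psi)) \ge \int (q_n - f + r_n)\mathbb{1}_{\{U_n>0\}} \psi.
\]
The left side is at most $C\rho_n^{-4}\int U_n$. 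The right side is at least $\tfrac{q_n}{2}\int_{\{U_n>0\}}\psi$, which is of order $q_n$ because $\{U_n=0\}\cap B_{\rho_n}$ must be small: on $\{U_n=0\}$ the variational inequality forces the source to be $\le 0$, which is false on $B_{\rho_n}$. Hence $|\{U_n = 0\}\cap B_{\rho_n}| = 0$, and we get $\int U_n \gtrsim q_n \rho_n^4$. This gives the desired contradiction directly and avoids Harnack.
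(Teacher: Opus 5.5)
Your final argument, the test-function computation in the last paragraph, is correct. It is a genuinely different route from the paper's.

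The paper uses a barrier. On a fixed ball $B_{\bar r}$ where $q_n - f + r_n \ge q_n/2$, it compares $U_n$ with the paraboloid $\frac{q_n}{16}(\bar r^2-|y|^2)$ via the comparison principle (Lemma \ref{lem:mp}). This gives the pointwise bound $U_n \ge c\,q_n$ on $B_{\bar r/2}$, which is then integrated against the mass constraint.

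You argue by duality instead. You move the operator onto a nonnegative bump $\psi$, so all you need is $\|\divg(H_n\nabla\psi)\|_{L^\infty}\le C$ together with $\int U_n\mu_n=1$. This uses neither a maximum principle nor uniform ellipticity, only bounded $C^1$ coefficients. The same few lines would therefore also settle Proposition \ref{prop:deg_bound_q}, where the paper has to build an anisotropic barrier by separation of variables. What the paper's barrier buys in exchange is pointwise information: positivity and size of order $q_n$ of $U_n$ near the origin. That fits the comparison arguments used in the rest of the section, and is reused, for instance, in the existence proof for the noncoercive limit problem.

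Your write-up can be shortened considerably:
\begin{itemize}
\item The nondegeneracy/Harnack detour, including the radial function $\phi$, is never used.
\item The growing scale $\rho_n$ is not needed.
\item The a.e.\ positivity of $U_n$ on $B_{\rho_n}$ is not needed either, although your argument for it is correct.
\end{itemize}
The rescaled problem satisfies $-\divg(H_n\nabla U_n)\ge q_n-f+r_n$ on all of $B_{R_n}$; this is the rescaled form of $-\Delta_\Gamma u_M\ge(1+\alpha_M)g-1$ on $\Gamma$. Testing it against one fixed $\psi\in C^2_c(B_1)$ with $\psi\ge 0$ gives $(q_n-C)\int\psi\le C_\psi\int U_n\le C$. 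Hence $q_n\le C$ directly, with no contradiction argument needed.
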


\begin{proof}
	We argue by contradiction. Assume that, for some subsequence, $ \frac{\beta_n}{\varepsilon_n^{\gamma}} \rightarrow \infty $ as $n\rightarrow \infty$. Therefore, there exists $ \overline{r}> 0 $ independent of $ n $ such that 
	\[\frac{\beta_n}{\varepsilon^{\gamma}} - f(y) + r_n(y) \geq \frac{\beta_n}{2\varepsilon_n^{\gamma}} \faskip \forall y\in B_{\overline{r}}.\]
	
	Define the function
	\[
		\varphi(y) = \frac{\beta_n}{16 \varepsilon_n^{\gamma}} \left( \overline{r}^{2} - |y|^{2} \right) 
	\]
	which satisfies
	\[
		-\divg(H_n \nabla \varphi) = \frac{\beta_n}{4 \varepsilon_n^{\gamma}} + o(1).
	\]
	
	For $n$ large enough, $-\divg(H_n \nabla U_n) \geq -\divg(H_n \nabla \varphi)$ in $B_r$, and since $U_n \geq 0$ and $\varphi = 0$ on $ \partial B_{\overline{r}} $, using the comparison principle we conclude $U_n \geq \varphi$ in $ B_{\overline{r}} $. In particular,
	\[U_n \geq \frac{3 \overline{r}^{2}\beta_n}{64 \varepsilon_n^{\gamma}} \text{ in }B_{\overline{r}/2}.\]
Therefore,
\[1 = \int_{\RR^{2}} U_n \mu_n(y)\mathrm{d}y \geq \int_{B_{\overline{r}/2}} U_n \mu_n(y) \mathrm{d}y \geq \frac{3 \overline{r}^{2}\beta_n}{64 \varepsilon_n^{\gamma}} \mu_n(B_{\overline{r}/2}) \rightarrow \infty, \]
a contradiction. We conclude that $\frac{\beta_n}{\varepsilon_n^{\gamma}}$ must be bounded.
\end{proof}

\begin{prop}
	\label{prop:nondeg_infty_bd}
	There exist $C_0, R_1 > 0$ such that $ \supp(U_n) \subset B_{R_1} $ and 
	\begin{equation}
	\label{eq:nondeg_infty_bd}
	\|U_n\|_{L^{\infty}(\RR^{2})} \leq C_0 \frac{\beta_n}{\varepsilon_n^{\gamma}} ,
	\end{equation}
	for every $ n\in \NN $.
\end{prop}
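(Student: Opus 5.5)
The plan is to prove the two claims in turn: first the uniform bound on the support, by nondegeneracy away from the origin, and then the $L^\infty$ bound, by comparison with an explicit supersolution. Write $q_n := \beta_n/\varepsilon_n^{\gamma}$, which is bounded by Proposition \ref{prop:nondeg_q_bound}, say $q_n \leq Q$.

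\textbf{Support.} By homogeneity and \eqref{eq:hypothesis_nondeg}, which in the rescaled variables holds on all of $B_{R_n}$, we have
\[
q_n - f(y) + r_n(y) \leq Q - \tfrac12 |y|^{\gamma}\min_{\partial B_1} f .
\]
Hence there is $R_*$, independent of $n$, such that the right-hand side of \eqref{eq:U} is at most $-\kappa$ on $B_{R_n}\setminus B_{R_*}$, for any prescribed $\kappa>0$. In particular $U_n$ is a subsolution of the divergence-form operator there, i.e. $-\divg(H_n\nabla U_n)\le 0$.
\begin{enumerate}
\item The local maximum principle for divergence-form operators (De Giorgi--Nash--Moser, \cite[Theorem 8.17]{GT83}) gives $\sup_{B_1(y_0)} U_n \leq C\int_{B_2(y_0)} U_n \leq C'$ for $|y_0|\geq R_*+2$. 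The constant is uniform because the ellipticity of $H_n$ is uniform and $\|U_n\|_{L^1(\mu_n)}=1$. This replaces the mean value property used in Proposition \ref{prop:compact}.
\item Apply Lemma \ref{lem:nondeg} to $U_n(y_0+\cdot)/C'$, with $\kappa$ chosen as $C'\kappa_0$. This gives $U_n\equiv0$ on $B_{1/2}(y_0)$.
\end{enumerate}
We conclude $\supp U_n\subset B_{R_1}$ with $R_1=R_*+3$.

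\textbf{$L^\infty$ bound.} Since $f\geq0$ and $r_n\to0$ uniformly on $B_{R_1}$, the right-hand side of \eqref{eq:U} is at most $q_n + o(1)$ there. The plan is to compare with
\[
\varphi(y)=\frac{q_n + \eta_n}{2\lambda}\bigl(R_1^2-|y|^2\bigr)+\eta_n',
\]
chosen so that $-\divg(H_n\nabla\varphi)\geq q_n+o(1)$; the $O(\varepsilon_n)$ derivatives of $H_n$ enter only as lower-order terms. Since $\varphi\geq U_n=0$ on $\partial B_{R_1}$, Lemma \ref{lem:mp} gives $U_n\leq C(q_n+o(1))$.

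\textbf{Main obstacle.} The difficulty is turning $C(q_n+o(1))$ into $C_0 q_n$: this requires a lower bound $q_n\geq c>0$. I would argue by contradiction.
\begin{enumerate}
\item Suppose $q_n\to0$ along a subsequence. Then on $\{U_n>0\}$ the right-hand side of \eqref{eq:U} is at most $o(1)$, so $-\divg(H_n\nabla U_n)\leq o(1)$ on all of $B_{R_1}$.
\item Since $U_n=0$ on $\partial B_{R_1}$, the same comparison gives $\|U_n\|_\infty\to0$.
\item On the other hand, $\int U_n\mu_n=1$ with $\supp U_n\subset B_{R_1}$ forces $\|U_n\|_\infty\geq c>0$, a contradiction.
\end{enumerate}

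This contradiction argument gives $\liminf q_n>0$. Absorbing the $o(1)$ terms into $q_n$ then yields \eqref{eq:nondeg_infty_bd} for $n$ large, and finitely many remaining $n$ are handled by enlarging $C_0$ and $R_1$. If a cleaner route is wanted, the positivity of $q_n$ can alternatively be obtained by integrating \eqref{eq:U} against $1$: the flux vanishes since $U_n$ has compact support, giving $\int (q_n - f + r_n)\mathbb 1_{\{U_n>0\}}=0$, which forces $q_n$ to be positive.
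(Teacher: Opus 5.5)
Your proof is correct, but it differs from the paper's on both halves.

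\textbf{Support.} The paper builds a global barrier from the limit problem. With $\Phi$ the compactly supported solution of \eqref{eq:obstacle_f} for $\overline{\alpha}=1$, the function $\varphi = 4^{2/\gamma}q_n^{(\gamma+2)/\gamma}\Phi((4q_n)^{-1/\gamma}y)$ solves the obstacle problem with right-hand side $q_n-\frac14 f$. Lemma \ref{lem:mp}, together with \eqref{eq:hypothesis_nondeg} and Proposition \ref{prop:nondeg_q_bound}, then gives $U_n\le\varphi$, hence $\supp U_n\subset B_{CR_0}$.

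You instead run the argument of Proposition \ref{prop:compact} directly on $U_n$. You correctly replace the mean value property by the local maximum principle for the variable-coefficient operator. One remark: \cite[Theorem 8.17]{GT83} is stated for $p>1$. The $L^1$ version you use is the standard extension to $p>0$; alternatively, use $L^2$ via an energy bound and Nash's inequality, as in Section \ref{sect:noncoercive}.

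Your route needs neither the limit problem nor its compact support. It also uses the equation only where the right-hand side is uniformly negative. So you never have to absorb the $O(\varepsilon_n^2)$ metric error $-\divg(H_n\nabla\varphi)+\Delta\varphi$ into $\frac14 f$ near the origin, where $f$ vanishes, which is the step in the barrier argument that needs care. In exchange, the paper's route yields the profile-shaped pointwise bound $U_n\le\varphi$ and avoids De Giorgi--Nash--Moser.

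\textbf{$L^\infty$ bound.} The obstacle you identify is self-inflicted. In the support step you already derived $q_n-f+r_n\le q_n-\frac12 f$ on all of $B_{R_n}$; equivalently, $(1+\alpha_n)g-1\le(1+\alpha_n)\gmax-1=\beta_n\gmax$ because $g\le\gmax$. So the right-hand side of \eqref{eq:U} is at most $q_n$, with no $o(1)$ term.

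The paper uses exactly this and compares with the paraboloid $q_n((CR_0)^2-|y|^2)$. Its operator error is $q_n\cdot o(1)$ because the barrier is proportional to $q_n$, so $U_n\le Cq_n$ follows at once. The lower bound $\liminf q_n>0$ is then deduced afterwards as Corollary \ref{cor:nondeg_inf_bound}.

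Your contradiction argument is valid; it is essentially that corollary proved first from the weaker bound $U_n\le C(q_n+o(1))$. It is just unnecessary.

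Your alternative of integrating \eqref{eq:U} over $\RR^2$ only gives $q_n>0$ for each fixed $n$. That suffices for the finitely many remaining indices. On its own it does not give a uniform lower bound, unless you also use the $L^\infty$ bound and unit mass to keep $|\{U_n>0\}|$ from shrinking.
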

\begin{proof}
	Take $ v $ the solution to \eqref{eq:obstacle_f} with $ \overline{\alpha} = 1 $, and define the function 
	\[\varphi(y) = 4^{2/\gamma} \left( \frac{\beta_n}{\varepsilon_n^{\gamma}} \right)^{(\gamma + 2)/\gamma} \Phi \left( \left( \frac{4 \beta_n}{\varepsilon_n^{\gamma}} \right)^{-1/\gamma} y  \right). \]
	
	Then, $ \varphi \geq 0 $ and a straightforward computation shows that $ \varphi $ satisfies
	\begin{align*}
		-\Delta \varphi = \left( \frac{\beta_n}{\varepsilon_n^{\gamma}} - \frac{1}{4} f(y) \right) \mathbb{1}_{\{\varphi>0\}}.
	\end{align*}
By Proposition \ref{prop:compact}, there is $ R_0 > 0 $ such that $ \supp(v)\subset B_{R_0} $. By definition, $ \supp(\varphi) \subset \left( \frac{4\beta_n}{\varepsilon_n^{\gamma}} \right)^{1/\gamma}B_{R_0} $, and by Proposition \ref{prop:nondeg_q_bound}, we may choose $C > 0$ such that $ \supp(\varphi) \subset B_{CR_0} $ for any $ n $. By \eqref{eq:approx_diff}, we know that 
	\begin{align*}
		-\divg(H_n \nabla \varphi) &\geq \frac{\beta_n}{\varepsilon_n^{\gamma}} - \frac{1}{4} f(y) +o(1) \\
					   &\geq \frac{\beta_n}{\varepsilon_n^{\gamma}} - \frac{1}{2} f(y)
	\end{align*}
	in $ B_{CR_0} $, for $ n $ large. In $ B_{R_n}\setminus B_{CR_0} $, we have $ -\divg(H_n \varphi) =0 \geq  \frac{\beta_n}{\varepsilon_n^{\gamma}} - \frac{1}{2} f(y)$.

	Recall that $ \supp(U_n)\subset B_{R_n} $ for $ R_n = \varepsilon_n^{-1} R $, whence we have $ U_n = \varphi = 0$ on $ \partial B_{R_n} $, so that by Lemma \ref{lem:mp} we have $ \varphi \geq U_n $. In particular, $ U_n \equiv 0 $ outside $ B_{CR_0} $.
	
	To prove \eqref{eq:nondeg_rescaling}, recall that 
	\[-\divg(H_n \nabla U_n) \leq \frac{\beta_n}{\varepsilon_n^{\gamma}} ~ \text{ in } B_{CR_0}.\]
	Thus, taking 
	\[\psi(y) = \frac{\beta_n}{\varepsilon_n^{\gamma}}((CR_0)^{2} - |x|^{2})\]
	we have $ \psi \geq 0 $ and $ -\divg(H_n \nabla \psi) = 2 \frac{\beta_n}{\varepsilon_{n}^{\gamma}} + o(1)\geq \frac{\beta_n}{\varepsilon_n^{\gamma}} $ in $ B_{CR_0} $ for $ n $ large, and $ \psi =U_n =0 $ on $ \partial B_{CR_0} $. By Lemma \ref{lem:mp}, $ \psi \geq U_n $, therefore 
	\[\|U_n\|_{L^{\infty}(\RR^{2})} \leq \frac{(CR_0)^{2}\beta_n}{2\varepsilon_n^{\gamma}},\]
	as claimed. 
\end{proof}

\begin{cor}
	\label{cor:nondeg_inf_bound}
	$ \liminf_{n \rightarrow \infty} \frac{\beta_n}{\varepsilon_n^{\gamma}} > 0$.
\end{cor}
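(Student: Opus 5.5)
The plan is to combine the two bounds of Proposition \ref{prop:nondeg_infty_bd} with the mass normalization of $U_n$. We argue by contradiction: suppose that along some subsequence (not relabeled) $\beta_n/\varepsilon_n^{\gamma}\rightarrow 0$.

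By Proposition \ref{prop:nondeg_infty_bd}, there is a fixed radius $R_1$ with $\supp(U_n)\subset B_{R_1}$ for every $n$. The same proposition gives
\[\|U_n\|_{L^{\infty}(\RR^{2})}\leq C_0\frac{\beta_n}{\varepsilon_n^{\gamma}}.\]
On the other hand, the normalization obtained right after \eqref{eq:nondeg_rescaling} says $\int_{\RR^{2}}U_n\mu_n\,\mathrm{d}y=\gmax^{-1}$, and this equals $1$ after setting $\gmax=1$. Since $\mu_n\rightarrow 1$ uniformly on compact sets by \eqref{eq:metric}, we have $\mu_n\leq 2$ on $B_{R_1}$ for $n$ large. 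Therefore
\[1=\int_{B_{R_1}}U_n\mu_n\,\mathrm{d}y\leq 2|B_{R_1}|\,C_0\frac{\beta_n}{\varepsilon_n^{\gamma}}\longrightarrow 0,\]
which is a contradiction. Hence $\liminf_{n\rightarrow\infty}\beta_n/\varepsilon_n^{\gamma}\geq \bigl(2C_0|B_{R_1}|\bigr)^{-1}>0$.

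The one point that needs care is that the constants $C_0$ and $R_1$ in Proposition \ref{prop:nondeg_infty_bd} are uniform in $n$. In that proof they come from $R_0$, the support radius of the fixed profile $v$ with $\overline{\alpha}=1$, together with the upper bound of Proposition \ref{prop:nondeg_q_bound}. Neither depends on a lower bound for $\beta_n/\varepsilon_n^{\gamma}$, so the argument is not circular. If $\beta_n/\varepsilon_n^{\gamma}$ becomes small, the support bound only gets better, since the comparison function $\varphi$ has support in $(4\beta_n/\varepsilon_n^{\gamma})^{1/\gamma}B_{R_0}$. We also need $\beta_n>0$ so that the comparison function there is well defined; this holds because Theorem \ref{thm:known} gives $\alpha_n\searrow\alpha_0$, and $U_n\not\equiv 0$ forces $\beta_n>0$.
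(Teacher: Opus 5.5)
Your proposal is correct and is essentially the paper's own argument. You combine the uniform support bound $\supp(U_n)\subset B_{R_1}$ and the $L^\infty$ bound $\|U_n\|_{L^\infty}\le C_0\,\beta_n/\varepsilon_n^{\gamma}$ from Proposition \ref{prop:nondeg_infty_bd} with the normalization $\int U_n\mu_n\,\mathrm{d}y=1$ and $\mu_n\to1$ on $B_{R_1}$, which gives $1\le C\,\beta_n/\varepsilon_n^{\gamma}$. Your check that this is not circular is also sound: the comparison function $\psi$ in that proposition is linear in $\beta_n/\varepsilon_n^{\gamma}$, so the metric error there is a relative $o(1)$ and needs no lower bound on $\beta_n/\varepsilon_n^{\gamma}$.
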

\begin{proof}
	By Proposition \ref{prop:nondeg_infty_bd}, we have 
	\[
		1 = \int_{B_{r_1}} U_n \mu_n(y)\mathrm{d}y \leq 2r_1^{2} \mu_n(B_{r_1})  \frac{\beta_n}{\varepsilon_n^{\gamma}},
	\]
which implies the claim by recalling that $\mu_n(B_{R_1}) \rightarrow |B_{R_1}| > 0$.
\end{proof}

We now turn to the convergence of the interfaces. First, take $ v_{\alpha} $ as defined in \eqref{eq:v_rescale}. It is then easy to see that the function $ \alpha \mapsto v_{\alpha} $ is continuous for the topology of uniform convergence.

The Hausdorff convergence of the interfaces will follow from the following proposition.
\begin{prop}
	\label{prop:hom_sandwich}
	Assume $ \frac{\beta_n}{\varepsilon_n^{2}} \rightarrow \overline{\alpha} $. For every $\delta \in (0, \overline{\alpha})$, there exists $ n_0 \in \NN $ such that 
	\begin{equation}
	\label{eq:hom_sandwich}
		v_{\overline{\alpha} - \delta} \leq U_n \leq v_{\overline{\alpha} + \delta} \text{ in } \RR^{2}.
	\end{equation}
	for every $n\geq n_0$.
\end{prop}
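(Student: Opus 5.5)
The plan is to compare $U_n$ with slightly perturbed versions of the barriers $v_{\overline{\alpha}\pm\delta}$, using the comparison principle Lemma \ref{lem:mp} on a large ball. (The hypothesis is read as $\beta_n/\varepsilon_n^{\gamma}\to\overline{\alpha}$.) The natural setting is $B_{R_1'}$, where $R_1'$ is larger than $R_1$ from Proposition \ref{prop:nondeg_infty_bd} and larger than the support radius of $v_{\overline{\alpha}+\delta}$. This radius is uniform in $\delta\le\overline{\alpha}/2$ by the scaling \eqref{eq:v_rescale} and Proposition \ref{prop:compact}. On $\partial B_{R_1'}$ both $U_n$ and the barriers vanish.

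\textbf{Upper bound.} Set $w=v_{\overline{\alpha}+\delta}$. It satisfies $-\Delta w=(\overline{\alpha}+\delta-f)\mathbb{1}_{\{w>0\}}$, and it is $W^{2,p}_{\loc}$, hence $C^{1,\beta}$. By \eqref{eq:approx_diff}, which can be justified for $W^{2,p}$ functions because $H_n-I=O(\varepsilon_n^2)$ in $C^1$ on compacts, we get
\[
-\divg(H_n\nabla w)=(\overline{\alpha}+\delta-f)\mathbb{1}_{\{w>0\}}+o(1).
\]
We need this to be $\ge \beta_n\varepsilon_n^{-\gamma}-f+r_n$ everywhere on $B_{R_1'}$. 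In $\{w>0\}$ this follows because $\beta_n\varepsilon_n^{-\gamma}\to\overline{\alpha}$ and $r_n\to0$ uniformly on compacts, which leaves a margin of $\delta/2$.

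The difficulty is the set $\{w=0\}$. There we need $0\ge \beta_n\varepsilon_n^{-\gamma}-f+r_n$, and this fails near the free boundary of $w$ if $\overline{\alpha}+\delta-f>0$ there. For this step I would use the fact that $\{w>0\}\supset\{f<\overline{\alpha}+\delta\}$ essentially up to a nullset. If $w=0$ on an open set $\{w=0\}^\circ$ where $\overline{\alpha}+\delta-f>0$, then the supersolution condition $-\Delta w\ge\overline{\alpha}+\delta-f$ from the variational inequality is violated. Hence on $\{w=0\}$ we have $f\ge\overline{\alpha}+\delta$ (a.e., and then everywhere by continuity, since $\{w=0\}$ has nonempty interior near such points or else is a nullset). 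Consequently $\beta_n\varepsilon_n^{-\gamma}-f+r_n\le -\delta/2<0$ there for $n$ large.

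A cleaner route is to apply Lemma \ref{lem:mp} with $u=U_n$, $v=w$, and right-hand side $F_n:=\beta_n\varepsilon_n^{-\gamma}-f+r_n$. What is needed is $-\divg(H_n\nabla w)\ge F_n$ as distributions. On $\{w>0\}$ this holds with margin. On $\{w=0\}$ it follows because $-\Delta w\ge \overline{\alpha}+\delta-f$ holds distributionally for obstacle solutions; this is the variational inequality in Proposition \ref{prop:existence}. Thus $-\Delta w\ge\overline{\alpha}+\delta-f$ everywhere. Adding the $o(1)$ error gives $-\divg(H_n\nabla w)\ge \overline{\alpha}+\delta-f-o(1)\ge F_n$ for $n$ large. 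This requires only that the $o(1)$ error is controlled in the distributional sense, which holds since $(H_n-I)$ is small in $C^1$ and $\nabla w$ is bounded. Lemma \ref{lem:mp} then yields $U_n\le v_{\overline{\alpha}+\delta}$.

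\textbf{Lower bound.} Here the roles are swapped. Apply Lemma \ref{lem:mp} with $u=v_{\overline{\alpha}-\delta}$ as the obstacle solution and $v=U_n$ as the supersolution. The lemma needs $u$ to solve an equation with the operator $\divg(H_n\nabla\cdot)$ exactly, which $v_{\overline{\alpha}-\delta}$ does not. The fix is a small perturbation. Let $\tilde w_n$ solve the obstacle problem $-\divg(H_n\nabla\tilde w_n)=(\overline{\alpha}-\delta-f)\mathbb{1}_{\{\tilde w_n>0\}}$ in $B_{R_1'}$ with zero boundary data. Since $U_n$ satisfies $-\divg(H_n\nabla U_n)\ge F_n\ge\overline{\alpha}-\delta-f$ for $n$ large, Lemma \ref{lem:mp} gives $\tilde w_n\le U_n$.

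It remains to show $\tilde w_n\ge v_{\overline{\alpha}-2\delta}$ for $n$ large. This follows by the same argument as the upper bound, with $v_{\overline{\alpha}-2\delta}$ as subsolution, its distributional inequality $-\Delta v\le(\overline{\alpha}-2\delta-f)^+$ in the obstacle sense. Equivalently, it follows from the uniform convergence $\tilde w_n\to v_{\overline{\alpha}-\delta}$ together with the continuity of $\alpha\mapsto v_\alpha$ and monotonicity in $\alpha$. Relabelling $2\delta\to\delta$ gives \eqref{eq:hom_sandwich}.

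The main obstacle is this lower bound: Lemma \ref{lem:mp} is asymmetric and requires an exact obstacle solution. I expect the intermediate problem $\tilde w_n$, together with a stability estimate for the obstacle problem under $C^1$-small perturbations of the coefficients, to be the technical heart of the argument.
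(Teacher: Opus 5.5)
Your upper bound is correct and is essentially the paper's argument: $v_{\overline{\alpha}+\delta}\in C^{1,1}_c$ satisfies $-\Delta v_{\overline{\alpha}+\delta}\ge\overline{\alpha}+\delta-f$ on all of $\RR^2$, and Lemma \ref{lem:mp} is applied with $u=U_n$. One small correction: the coefficient error is $o(1)$ in $L^\infty$ because $D^2v_{\overline{\alpha}+\delta}$ is bounded, not merely $\nabla v_{\overline{\alpha}+\delta}$.

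\textbf{The gap is in the lower bound.} The step $\tilde w_n\le U_n$ is valid. The remaining step $\tilde w_n\ge v_{\overline{\alpha}-2\delta}$ is the same kind of comparison you wanted to avoid: an obstacle solution for $-\divg(H_n\nabla\cdot)$ must lie above an exact obstacle solution for $-\Delta$. It does not follow by ``the same argument as the upper bound''. There the barrier was the supersolution, which is the case Lemma \ref{lem:mp} covers. Here the barrier is the subsolution.

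The inequality $-\Delta v\le(\overline{\alpha}-2\delta-f)^+$ you invoke is also too weak. At points where $\tilde w_n>0$ but $\overline{\alpha}-\delta-f<0$, it gives only $-\Delta v\le0$, while $-\divg(H_n\nabla\tilde w_n)<0$, so there is no ordering.

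Your alternative, uniform convergence $\tilde w_n\to v_{\overline{\alpha}-\delta}$, would close the argument, using the strict gap $v_{\overline{\alpha}-\delta}-v_{\overline{\alpha}-2\delta}\ge c>0$ on $\overline{\{v_{\overline{\alpha}-2\delta}>0\}}$. But that is a stability theorem you only assert. So the auxiliary problem buys nothing: any argument giving $\tilde w_n\ge v_{\overline{\alpha}-2\delta}$ would apply verbatim to $U_n$.

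\textbf{The missing idea} is that the lower bound needs no obstacle-type comparison at all. On the bounded open set $D=\{v_{\overline{\alpha}-\delta}>0\}$ the barrier solves a linear equation. So for $n$ large,
\[
-\divg(H_n\nabla v_{\overline{\alpha}-\delta})=\overline{\alpha}-\delta-f+o(1)\le\overline{\alpha}-\tfrac{\delta}{2}-f\le\frac{\beta_n}{\varepsilon_n^{\gamma}}-f+r_n\le-\divg(H_n\nabla U_n)\quad\text{in }D.
\]
The last inequality holds because $U_n$ is a supersolution on the whole domain, as you note yourself. Since $v_{\overline{\alpha}-\delta}=0\le U_n$ on $\partial D$, the ordinary weak maximum principle for the linear operator $-\divg(H_n\nabla\cdot)$ on $D$ gives $U_n\ge v_{\overline{\alpha}-\delta}$ in $D$. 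Outside $D$ the inequality is trivial because $U_n\ge0$.

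This is exactly the paper's proof, with no auxiliary problem and no loss from $\delta$ to $2\delta$. Running the same linear argument on $\{v>u\}\subset\{v>0\}$ gives the general ``subsolution below obstacle solution'' principle you were looking for.
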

\begin{proof}
	Fix $\delta > 0$ and $ r_2 > 0 $ such that $\supp (v_{\overline{\alpha} + \delta}) \cup B_{r_1} \subset  B_{r_2} $. There exists $n_1 \in \NN$ such that 
	\begin{equation}
	\label{eq:hom_rhs}
	\overline{\alpha} - \frac{\delta}{2} - f(y) \leq \frac{\beta_n}{\varepsilon_n^{4}} - f(y) + o(1) \leq \overline{\alpha} + \frac{\delta}{2} - f(y) \faskip \forall y \in B_{r_2}.
	\end{equation}
	
	By the fact that $v_{\overline{\alpha} + \delta} \in C^{1,1}_c(B_{r_2}) $, for some $ n_1 \in \NN $ we have
	\begin{align*}
		-\divg(H_n \nabla v_{\overline{\alpha} + \delta}) &\geq \overline{\alpha} + \delta - f(y) + o(1) \\
							  &\geq \overline{\alpha} + \frac{\delta}{2} - f(y)
	\end{align*}
	in $ B_{r_2} $ for every $ n\geq n_2 $. Since $ v_{\overline{\alpha} + \delta} = U_n = 0 $ on $ \partial B_{r_2} $, we can apply Lemma \ref{lem:mp} to get $ v_{\overline{\alpha} + \delta} \geq U_n $ in $ B_{r_2} $. The upper bound in \eqref{eq:hom_sandwich} follows by the fact that $ v_{\overline{\alpha} + \delta} = U_n = 0$ outside $ B_{r_2} $.

	In $ \{v_{\overline{\alpha} - \delta} > 0\} $, we have that 
	\begin{align*}
		-\divg(H_n \nabla v_{\overline{\alpha} - \delta}) &= \overline{\alpha} - \delta -f(y) \\
								  &\leq \overline{\alpha} - \frac{\delta}{2} - f(y) \\
								  &\leq -\divg(H_n\nabla U_n),
	\end{align*}
	for $ n \geq n_3 $ for some $ n_3 \in \NN $. By the comparison principle we get $ U_n \geq v_{\overline{\alpha} - \delta} $ in $ \{v_{\overline{\alpha} - \delta} > 0\}$. The lower bound in \eqref{eq:hom_sandwich} follows for $ n_0\geq \max\{ n_1, n_2, n_3\} $, and the fact that $ U_n \geq 0 $ in $ \RR^{2} $. 
\end{proof}

\begin{proof}[Proof of Theorem \ref{mainthm:isolated_hom}]
	By Proposition \ref{prop:nondeg_q_bound} and Corollary \ref{cor:nondeg_inf_bound}, we may choose a subsequence such that $ \frac{\beta_n}{\varepsilon_n^{\gamma}} \rightarrow \overline{\alpha} $. 

	Classical regularity estimates (see \cite[Lemma 9.17]{GT83}) imply 
	\begin{equation}
		\label{eq:reg_estimate}
	\|U_n\|_{W^{2,p}(B_{r_1})} \leq C\left\|\frac{\beta_n}{\varepsilon_n^{\gamma}} - f(y) + r_n(y) \right\|_{L^{p}(B_{r_1})} \leq C \left\|\frac{\beta_n}{\varepsilon_n^{\gamma}} - f(y) + r_n(y) \right\|_{L^{\infty}(B_{r_1})} \faskip \forall p \in (1,\infty),
	\end{equation}
which implies due to Proposition \ref{prop:nondeg_q_bound}, that $U_n$ is a bounded sequence in $ W^{2,p}(B_{r_1}) $. For $ p=2 $, the Banach-Alaoglu theorem implies that we may extract a subsequence such that $U_n \rightharpoonup \overline{U} $ weakly in $ H^2(B_{r_1}) $. Moreover, up to a further subsequence, the Rellich-Kondrachov theorem implies that $U_n\rightarrow \overline{U}$ strongly in $ H^1(B_{r_1}) $ and a.e. in $ B_{r_1} $.

	The weak formulation of the equation satisfied by $U_n$ reads
	\[
		\int_{B_{r_1}} H_n \nabla U_n \cdot \nabla (\varphi - U_n) \mathrm{d}y \geq \int_{B_{r_1}} \left( \frac{\beta_n}{\varepsilon_n^{\gamma}} - f + r_n \right) (\varphi - U_n) \mathrm{d}y, \faskip \forall \varphi \in H^{1}_0(B_{r_1}),~ \varphi \geq 0.
	\]
	By the strong $ H^{1} $ convergence of $U_n$, we can take the limit in this inequality to get 
	\[
		\int_{B_{r_1}} \nabla \overline{U} \cdot \nabla (\varphi - \overline{U}) \geq\int_{B_{r_1}} \left( \overline{\alpha} - f  \right) (\varphi - \overline{U}) \mathrm{d}y, \faskip \forall \varphi \in H^{1}_0(B_{r_1}),~ \varphi \geq 0. 
	\]
	Since $\overline{U}\in W^{2,2}(B_{r_1})$, this equation is satisfied strongly, i.e. \eqref{eq:obstacle_f} holds 

	Recalling that $U_n \leq C \frac{\beta_n}{\varepsilon_n^{\gamma}} \mathbb{1}_{B_{r_1}}$, the dominated convergence theorem implies
	\[ \int_{B_{r_1}}U_n \mu_n(y) \mathrm{d}y\rightarrow \int_{B_{r_1}} \overline{U}\mathrm{d}y, \]
	therefore $ \|\overline{U}\|_{L^{1}(B_{r_1})} = 1 $.

	Now, the above discussion implies that $ \overline{U} $ is a minimizer for \eqref{eq:min}. That is, $ \overline{\alpha} $ must satisfy \eqref{eq:alpha}. This implies that any convergent sequences $ (\frac{\beta_n}{\varepsilon_n^{\gamma}}) $, $ (U_n) $ must converge to the same limit. This implies the convergence along the parameter $ M $.

	To finish, we prove the Hausdorff convergence of the interfaces. Let $ S = \partial\{\overline{U} > 0\} = \partial\{v_{\overline{\alpha}} > 0\} $. By \eqref{eq:v_rescale}, we have that 
	\[
		\partial\{v_{\overline{\alpha} \pm \delta} > 0\} = \left( 1 \pm \frac{\delta}{\overline{\alpha}} \right)^{1/\gamma} S.
	\]
	Then, $ \mathrm{dist}_H(\{v_{\overline{\alpha} + \delta} = 0\}, \{v_{\overline{\alpha} - \delta} > 0\}) \rightarrow 0 $ as $ \delta \rightarrow 0 $. 

	By Proposition \ref{prop:hom_sandwich}, for any $ \delta > 0 $ we have $ \partial \{U_n > 0\} \subset \{v_{\overline{\alpha} + \delta} > 0\} \setminus \{v_{\overline{\alpha} - \delta} = 0\}$ for $ n $ sufficiently large, which by the above discussion implies the desired convergence.
\end{proof}

Note that \eqref{eq:reg_estimate} and the Banach-Alaoglu theorem yield that $U_n \rightharpoonup \overline{U}$ in $W^{2,p}(B_{r_1})$ for any $p \in (1, \infty)$, in particular strongly in $C^{1,\gamma}(B_{r_1})$ for any $\gamma \in (0,1)$ by standard Sobolev embeddings.

\section{Morse functions}
\label{ssect:quadratic}

In this section, we treat in detail the case where $ g $ is a Morse function. For ease of reference, we record the formulas for the relative mass of each maximum point in the following theorem.

\begin{thm}
	\label{thm:lambda_i}
	With the notation and hypotheses of Theorem \ref{mainthm:morse}, if $ \sigma^i_{1,2} > 0 $ are the eigenvalues of $- D^2g(p_i) $, assume without loss of generality that $ \sigma^i_2 > \sigma_1^i $ and denote
	\[
		\begin{array}{cc}
			s_i = \sigma^i_2 - \sigma^i_1.
		\end{array}
	\]
Define
\begin{equation}
\label{eq:C0}
	C_0(s_i) = \left( \frac{3}{1-2s_i} \right)^{2} \left( \frac{1}{8} - \frac{1}{2}\sqrt{\frac{1}{16} - \frac{1}{2} \left( \frac{1-2s_i}{3} \right)^{2}} \right), 
\end{equation}
\begin{equation}
	\label{eq:beta}
	\beta(s_i) = \frac{C_0(s_i) (1-2s_i)}{3},
\end{equation}
\begin{equation}
	\label{eq:mass}
	M(s_i) = \frac{73\pi}{96} \frac{C_0(s_i)^2}{\sqrt{\frac{1}{16} - \beta(s_i)^2}}.
\end{equation}
Then,
\begin{equation}
	\label{eq:lambda_i}
	\lambda_i = \frac{M(s_i) \mathrm{tr}(-D^2 g(p_i))^{-3/2}}{\sum_{j=1}^{N} M(s_j) \mathrm{tr}(-D^2 g(p_j))^{-3/2}}. 
\end{equation}
\end{thm}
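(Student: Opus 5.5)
Two ingredients are needed: the explicit solution of the limit problem \eqref{eq:global} for quadratic $f$, which gives the mass constant $M(s_i)$, and the matching of the masses $m_i$ near the different maxima through the common multiplier $\alpha_M$.

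\emph{Matching of masses.} Let $p_1,\dots,p_N$ be the nondegenerate maxima and apply Theorem \ref{mainthm:isolated_hom} at each $p_i$ with $f_i(y)=\frac12 y^{\top}(-D^2g(p_i))y$, so $\gamma=2$. By Theorem \ref{thm:known}(3), for $n$ large $u_n$ vanishes outside $\bigcup_i B_R(p_i)$. Hence $M_n=\sum_i m_{i,n}$. Theorem \ref{mainthm:isolated_hom} gives
\[
\frac{\beta_n}{m_{i,n}^{1/3}}\to \overline{\alpha}_i ,
\]
where $\overline{\alpha}_i$ is the multiplier of the mass-one solution of \eqref{eq:global} with $f=f_i$. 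Since $\beta_n$ is common to all $i$, we get $m_{i,n}\sim (\beta_n/\overline{\alpha}_i)^3$. Therefore
\[
\lambda_i=\lim_n \frac{m_{i,n}}{M_n}=\frac{\overline{\alpha}_i^{-3}}{\sum_j \overline{\alpha}_j^{-3}} .
\]
Convergence of $u_n/M_n$ to $\sum_i\lambda_i\delta_{p_i}$ then follows, because the support of $u_n$ near $p_i$ shrinks like $m_{i,n}^{1/6}$. It remains to compute $\overline{\alpha}_i^{-3}$ and show it is proportional to $M(s_i)\,\mathrm{tr}(-D^2g(p_i))^{-3/2}$.

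\emph{Scaling reduction.} Invert the relation $\alpha\mapsto$ mass. By \eqref{eq:v_rescale} with $\gamma=2$, the solution with multiplier $\alpha$ has mass $\alpha^{3}\cdot\mathrm{mass}(v_1)$. So $\overline{\alpha}_i^{-3}$ equals the mass of the solution with $\overline{\alpha}=1$, up to the constants involving $\gmax$, which are the same for all $i$. Next use the quadratic scaling: rotate so that $-D^2g(p_i)=\mathrm{diag}(\sigma_1,\sigma_2)$, and rescale $y\mapsto \tau^{-1/2}y$ with $\tau=\mathrm{tr}(-D^2 g(p_i))$. This turns $f_i$ into a form normalized by trace one, and the mass acquires exactly the factor $\tau^{-3/2}$, leaving a one-parameter problem in the anisotropy. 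I expect the $s_i$ in the statement is meant in this normalized sense, i.e.\ after dividing by the trace.

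\emph{Explicit solution of the normalized problem.} I would solve
\[
-\Delta v=\Bigl(1-\tfrac12\sigma_1y_1^2-\tfrac12\sigma_2y_2^2\Bigr)\mathbb 1_{\{v>0\}},\qquad \sigma_1+\sigma_2=1,
\]
with the ansatz that $\{v>0\}=E$ is an ellipse $y_1^2/a^2+y_2^2/b^2<1$. Inside $E$ take $v=\pi_4(y)$, a quartic polynomial satisfying the PDE, chosen so that $v$ vanishes to second order on $\partial E$. Since $v$ must vanish quadratically on $\partial E$, this quartic should be $v=(1-y_1^2/a^2-y_2^2/b^2)^2\,Q(y)$ with $Q$ a constant or a quadratic form. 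Matching the coefficients of $-\Delta v$ yields algebraic equations for $a,b$ and the coefficients. These can be reduced to a quadratic whose admissible root gives $C_0$ and $\beta$ as in \eqref{eq:C0}--\eqref{eq:beta}. Integrating $v$ over $E$ with the standard moments of the ellipse gives \eqref{eq:mass}, with the factor $\sqrt{1/16-\beta^2}$ coming from the area $\pi ab$.

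\emph{Verification and main obstacle.} Having produced a $C^{1,1}$, nonnegative, compactly supported function that solves \eqref{eq:obstacle_f}, the uniqueness in Proposition \ref{prop:existence} identifies it as the limit. Two points need checking: that $v\ge0$ inside $E$, and that $1-f\le0$ near $\partial E$ outside $E$, so that the variational inequality holds. The main obstacle is the algebra of the ansatz: determining the right form of $Q$ so the system closes, and selecting the correct root (the minus sign in \eqref{eq:C0}) so that positivity holds. I would first try $Q$ equal to a constant times $1+cy_1^2+dy_2^2$ and check consistency against the isotropic case $s=0$, where everything is radial and computable directly.
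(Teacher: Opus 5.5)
\textbf{The step that fails is the scaling factor.} You claim that passing to the trace-normalized form multiplies the mass by exactly $\tau^{-3/2}$. If $\tilde v$ solves $-\Delta\tilde v=(1-\tilde f_i)\mathbb{1}_{\{\tilde v>0\}}$ with $\tilde f_i=f_i/\tau$, then the solution for $f_i$ is $v(y)=\tau^{-1}\tilde v(\tau^{1/2}y)$. Its height scales like $\tau^{-1}$ and the area of its support like $\tau^{-1}$, so $\int v=\tau^{-2}\int\tilde v$. Hence $\overline{\alpha}_i^{-3}\propto M(s_i)\,\mathrm{tr}(-D^2g(p_i))^{-2}$. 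No correct argument can produce the exponent $-3/2$ in \eqref{eq:lambda_i}, so the formula as stated is false.

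A direct check: for $-\Delta v=(a-\sigma|y|^2)\mathbb{1}_{\{v>0\}}$ the solution is $v=\frac{\sigma}{16}\bigl(\overline{r}^2-|y|^2\bigr)_+^2$ with $\overline{r}^2=2a/\sigma$, and its mass is $\pi a^3/(6\sigma^2)$. Two isotropic maxima whose Hessian traces differ by a factor $4$ therefore carry masses in ratio $16$, not $4^{3/2}=8$.

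The paper's own derivation has the same slip. Its explicit $v_i=\frac{\overline{\alpha}^2\gmax^3}{\mathrm{tr}(A_i)}\Phi\bigl(\sqrt{\mathrm{tr}(A_i)/(\gmax^2\overline{\alpha})}\,y;s_i\bigr)$ has integral $\gmax^5\overline{\alpha}^3\,\mathrm{tr}(A_i)^{-2}M(s_i)$, but it is recorded with $\mathrm{tr}(A_i)^{-3/2}$. With the scaling done correctly, your argument concludes $\lambda_i=M(s_i)\,\mathrm{tr}(-D^2g(p_i))^{-2}\big/\sum_j M(s_j)\,\mathrm{tr}(-D^2g(p_j))^{-2}$.

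\textbf{The rest of your plan is sound.} Matching the masses through the common $\beta_n$ is equivalent to the paper's argument, which rescales every local piece by the total mass $M_n$ so that the common multiplier appears directly.

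Your ansatz works once $Q$ is a constant, since a quartic $v$ leaves no room for a quadratic $Q$. It is exactly the form $\Phi=\frac{1}{4C_0}(2C_0-q)^2$ that the paper reaches via harmonic polynomials and Euler's identity. Imposing $-\Delta\bigl(K(1-c_1y_1^2-c_2y_2^2)^2\bigr)=1-sy_1^2-(1-s)y_2^2$ gives $4K(c_1+c_2)=1$, $s=c_1\frac{3c_1+c_2}{c_1+c_2}$ and $1-s=c_2\frac{c_1+3c_2}{c_1+c_2}$. This is more direct than the paper's route. It also makes the nondegeneracy check immediate: for $c_1,c_2>0$ you get $c_1<s$ and $c_2<1-s$, so $\{sy_1^2+(1-s)y_2^2\le 1\}\subset E$.

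Two details will not match literally.
\begin{enumerate}
\item In \eqref{eq:C0} the parameter is the smaller eigenvalue of $A/\mathrm{tr}A$. The statement's $s_i=\sigma_2^i-\sigma_1^i$ is not even scale invariant and should read $\sigma_1^i/(\sigma_1^i+\sigma_2^i)$. So the isotropic case is $s=1/2$, where \eqref{eq:C0} has the removable limit $C_0=1/2$ (the radial value). At $s=0$, where you planned to check, it gives $3/4$, the degenerate one-dimensional value.
\item After the paper's change of variables $z_j=\sqrt{k_j/C_0}\,x_j$ one has $\Phi=C_0(1-|z|^2/2)^2$ on $B_{\sqrt2}$. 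So the constant in \eqref{eq:mass} is $2\pi/3$, not $73\pi/96$; this cancels in $\lambda_i$.
\end{enumerate}
Your normalized form has trace $1/2$ rather than $1$, which only rescales these constants by a universal factor.
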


Recall that, if $ g $ is a Morse function, then around each maximum $ p $ we may write
\[g(x) = \gmax - x^{\top}A x + O(|x|^{3}),\]
in a suitable coordinate system, and with $ A $ a positive definite symmetric matrix.  Of course, this $ g $ satisfies the hypotheses of Theorem \ref{mainthm:isolated_hom} with $ \gamma = 2 $, so that the discussion in the preceding subsections remains valid.

In this case, the limit problem reads
\begin{equation}
	\label{eq:obstacle}
	\begin{cases}
		-\Delta v = (\overline{\alpha} - x^{\top} A x) \mathbb{1}_{\{v>0\}} & \text{ in } \RR^2, \\
		v \geq 0 & \text{ in } \RR^2,\\
		v \in L^{1}(\RR^{2}).
	\end{cases}
\end{equation}

For the subsequent analysis, we note that the number of parameters can be reduced in the following way. If $\Phi$ is a solution to 
\begin{equation}
	\label{eq:obst_reduced}
	\begin{cases}
	-\Delta \Phi = \left( 1 - y^{\top}\Tilde{A}y  \right)\mathbb{1}_{\{\Phi>0\}} & \text{in } \RR^{2},\\ 
		\Phi \geq 0 & \text{in } \RR^{2}, \\
		\Phi \in L^{1}(\RR^{2}), 
	\end{cases}
\end{equation}
where $\Tilde{A} = \frac{1}{\text{tr}(A)}A$, then,
\begin{equation}
	\label{eq:v_hom}
	v(x) = \frac{\overline{\alpha}^{2}}{\text{tr}(A)} \Phi \left( \sqrt{\frac{\text{tr}(A)}{\overline{\alpha}}} x \right) 
\end{equation}
solves \eqref{eq:obstacle}. Hence, up to an orthogonal change of coordinates, it suffices to study the case
\[
	A = \begin{pmatrix}
		s & 0 \\ 
		0 & 1-s
	\end{pmatrix}
\]
for $s \in (0, \frac{1}{2}]$.

\subsection{Radial case}
\label{ssect:radial_case}

In what follows we will consider the case in which $A$ is of the form
\[A = \frac{1}{2} I.\]
In this case, the solution $\Phi$ will be of the form $\Phi(x) = \psi(|x|)$ for some $\psi:\RR \rightarrow \RR$. Since $\psi(0) > 0$, for some interval $[0, \overline{r})$, we must have
\begin{align*}
	-\Delta \Phi = -\frac{1}{r} (r \psi'(r))' &= 1 -  \frac{r^{2}}{2},
\end{align*}
so that 
\[\psi(r) = \psi(0) -\frac{r^{2}}{4} + \frac{r^{4}}{32} \faskip \forall r \in [0, \overline{r}).\]

At $\overline{r}$, we must have $\psi(\overline{r}) = \psi'(\overline{r}) =0$. Thus
\begin{align*}
	\psi(\overline{r}) &= \psi(0) - \frac{\overline{r}^{2}}{4} +  \frac{\overline{r}^4}{32} = 0, \\
	\psi'(\overline{r}) &= -\frac{\overline{r}}{2} + \frac{\overline{r}^{3}}{8} = 0.
\end{align*}
Hence, $\overline{r} = 2$ and $\psi(0) = \frac{1}{2}$.

We conclude that the solution in this case is 
\begin{equation}
\label{eq:solution_radial}
\Phi(x) = \begin{cases}
	\frac{1}{2} - \frac{|x|^{2}}{4} + \frac{|x|^{4}}{32} & \text{if } |x| \leq 2\\
	0 & \text{ otherwise}
\end{cases}
\end{equation} 

\subsection{General case}
\label{ssect:general_case}

Recall that, in this case, 
\begin{align*}
A = \begin{pmatrix}
	s & 0 \\
	0 & 1-s
\end{pmatrix}
\end{align*}
for some $s \in (0, \frac{1}{2})$.

In this case, $\Phi$ should have the form
\begin{align*}
\Phi(x) = \frac{s x_1^4 + (1-s) x_2^4}{12} - \frac{|x|^2}{4} + \varphi(x),
\end{align*}
where $\varphi$ is harmonic inside the positivity set of $\Phi$. We will denote $p_2(x) = \frac{|x|^2}{4}$ and $p_4(x) = \frac{s x_1^4 + (1-s) x_2^4}{12}$. For $\Phi$ to be a solution to \eqref{eq:obstacle}, $\varphi$ must satisfy
\begin{equation}
\label{eq:bc_zero}
\varphi = p_2 - p_4 ~~ \text{on }\partial\{\Phi>0\},
\end{equation}
\begin{equation}
\label{eq:bc_one}
\nabla\varphi = \nabla p_2 - \nabla p_4 ~~ \text{on } \partial\{\Phi>0\}.
\end{equation}

We look for $\varphi$ of the form
\begin{align*}
\varphi(x) = C_0 + Y_2(x) + Y_4(x),
\end{align*}
where $Y_i$ is a harmonic polynomial of degree $i$.

By taking $x\cdot$\eqref{eq:bc_one}, we get, recalling that $ x\cdot \nabla P_m = mP_m$ for any homogeneous polynomial $P_m$ of degree $m$,
\begin{align*}
	x\cdot \nabla \varphi =x\cdot( \nabla Y_2 + \nabla Y_4 ) = 2Y_2 + 4Y_4 &= x\cdot( \nabla p_2 - \nabla p_4) = 2 p_2 - 4 p_4, \\
\end{align*}
hence,
\[
	 p_2 - Y_2 = 2 (Y_4 + p_4).
\]

Note that \eqref{eq:bc_zero} can be rewritten as
\begin{align*}
p_2- Y_2 = C_0 + Y_4 + p_4,
\end{align*}
so that 
\begin{equation}
	\label{eq:curves}
\begin{cases}
	Y_4 + p_4 = C_0 & \text{on } \partial\{\Phi>0\},\\
	p_2 - Y_2 = 2 C_0  & \text{on } \partial\{\Phi>0\}.
\end{cases}
\end{equation}

Writing 
\begin{align*}
	p_2 - Y_2 &= b_1 x_1^2 + b_2 x_2^2, \\
	p_4 + Y_4 &= a_1 x_1^4 + a_2 x_2^4 + a_3 x_1^2x_2^2,
\end{align*}
and using polar coordinates $x_1 = \rho \cos(\theta)$ and $x_2 = \rho \sin(\theta)$, we can write \eqref{eq:curves} as 
\begin{align*}
	\rho^2 (b_1 \cos(\theta)^2 + b_2 \sin(\theta)^2) &= 2C_0, \\
	\rho^4 (a_1 \cos(\theta)^4 + a_2 \sin(\theta)^4 + a_3 \cos(\theta)^2 \sin(\theta)^2) &= C_0,
\end{align*}
which can be rewritten as
\begin{align*}
	\rho^4 = \frac{C_0}{a_1 \cos(\theta)^4 + a_2 \sin(\theta)^4 + a_3 \cos(\theta)^2 \sin(\theta)^2} &= \left( \frac{2C_0}{b_1 \cos(\theta)^2 + b_2 \sin(\theta)^2} \right)^2,
\end{align*}
and therefore
\[
	 \frac{b_1^2 \cos(\theta)^4 + b_2^2 \sin(\theta)^4 + 2b_1b_2 \cos(\theta)^2 \sin(\theta)^2}{a_1 \cos(\theta)^4 + a_2 \sin(\theta)^4 + a_3 \cos(\theta)^2 \sin(\theta)^2} = 4C_0.
\]
Since this identity holds for every $ \theta $, we must have
\begin{align*}
	\begin{array}{ccc}
		b_1^2 = 4C_0a_1, &
		b_2^2 = 4C_0a_2, &
	2b_1b_2 = 4C_0a_3.
	\end{array}
\end{align*}

We now turn to expressing $\varphi$ in the base of harmonic polynomials. Note that, by the invariance of equation \eqref{eq:obst_reduced} under the transformations $(y_1, y_2) \mapsto (-y_1 ,y_2) $ and reflections around the origin, we consider only homogeneous polynomials which are even in each variable, i.e.
\[Y_2 (x) = \beta_2(x_1^2 -x_2^2),~~ Y_4(x) = \beta_4(x_1^4 + x_2^4 -6x_1^2x_2^2),\]
where $ \beta_2 $ and $ \beta_4 $ are real numbers to be determined. Hence 
\[
	\begin{array}{ccccc}	
		b_1 = \dfrac{1}{4} - \beta_2, & b_2 = \dfrac{1}{4} + \beta_2, & a_1 = \dfrac{s}{12} + \beta_4, & a_2 = \dfrac{1-s}{12} + \beta_4, & a_3 = -6\beta_4.
	\end{array}
\]
Plugging this in the equations for $a_i$, $b_i$, we get
\begin{subequations}
\label{eq:beta_system}
\begin{align}
	\left( \frac{1}{4} - \beta_2 \right)^2 &= 4C_0\left( \frac{s}{12} + \beta_4\right), \label{eq:beta_square1}\\
	\left( \frac{1}{4} + \beta_2 \right)^2 &= 4C_0\left( \frac{1-s}{12} + \beta_4\right),  \label{eq:beta_square2}\\
	\left(\frac{1}{4} - \beta_2 \right) \left(\frac{1}{4} + \beta_2 \right) &= -12C_0\beta_4 \label{eq:beta_product}.
\end{align}
\end{subequations}

Subtracting \eqref{eq:beta_square1} from \eqref{eq:beta_square2} yields 
\begin{equation}
	\label{eq:beta2}
	\beta_2 =  \frac{C_0 \left(1-2s\right)}{3} . 
\end{equation}
On the other hand, adding \eqref{eq:beta_product} and \eqref{eq:beta_square2} and plugging \eqref{eq:beta2} gives
\begin{align*}
	\frac{1}{8} + \frac{\beta_2}{2} &= \frac{C_0 (1-s)}{3} - 8 C_0 \beta_4\\
 	\Leftrightarrow ~~ C_0 \beta_4 &= \frac{C_0}{48} - \frac{1}{64}.
\end{align*}
Plugging the expressions for $\beta_2$ and $\beta_4$ in \eqref{eq:beta_square1} we get
\begin{align*}
	\frac{1}{16} - C_0 \left( \frac{1-2s}{6} \right) + C_0^{2}\left( \frac{1-2s}{3} \right)^{3} &= \frac{C_0 s}{3} + \frac{C_0}{12} - \frac{1}{16}.
\end{align*}
Rearranging terms we obtain
\begin{equation}
\label{eq:quad_C0}
	\frac{1}{8} - \frac{C_0}{4} + C_0^{2}\left( \frac{1-2s}{3} \right)^{2} = 0,
\end{equation}
whence
\[
	C_0 = \left( \frac{3}{1-2s} \right)^{2} \left( \frac{1}{8} - \frac{1}{2}\sqrt{\frac{1}{16} - \frac{1}{2} \left( \frac{1-2s}{3} \right)^{2}} \right),
\]
which is precisely \eqref{eq:C0}.

Now, we will write $\Phi$ in a more compact form. For any $x\in \{\Phi>0\}$ we have
\begin{align*}
	\Phi(x) = C_0 - \left( \left( \frac{1}{4} - \beta_2 \right)x_1^{2} + \left( \frac{1}{4} + \beta_2 \right) x_2^{2} \right) + \left( \frac{s}{12} + \beta_4 \right)x_1^{4}  + \left( \frac{1-s}{12} + \beta_4 \right)x_2^{4} - 6\beta_4 x_1^{2}x_2^{2}, 
\end{align*}
which by \eqref{eq:beta_system} may be written as
\begin{align*}
	\Phi(x) = C_0 - \left( \left( \frac{1}{4} - \beta_2 \right)x_1^{2} + \left( \frac{1}{4} + \beta_2 \right) x_2^{2} \right) + \frac{1}{4C_0}\left( \left( \frac{1}{4} - \beta_2 \right)x_1^{2} + \left( \frac{1}{4} + \beta_2 \right) x_2^{2} \right)^{2}.
\end{align*}

Since $t\mapsto C_0 - t + \frac{1}{4C_0}t^{2}$ is a nonnegative quadratic polynomial with zero at $t_0 = 2C_0$, we conclude that
\[
	\{\Phi>0\} = \left\{ \left( \frac{1}{4} - \beta_2 \right)x_1^{2} + \left( \frac{1}{4} + \beta_2 \right) x_2^{2} < 2C_0\right\}.
\]

	We would like to compute a formula for the limit $ \frac{\beta_M}{M^{1/3}} $ when we assume $ \{g = \gmax \} $ is a finite set of nondegenerate maxima. Note that this limit exists by applying Theorem \ref{mainthm:isolated_hom} at each maximum and by imposing that $ M $ equals the sum of the masses around each point, as we do in the sequel. This illustrates a way in which information about a \textit{global} parameter may be derived from local information at each point.

	Take $ M_n \rightarrow 0 $. Fix disjoint neighborhoods $ \{\Theta_i\}_{i=1}^N $ around each $ p_i $. For sufficiently large $ n $, we have $ \{u_n > 0 \} = \bigcup_{i=1}^N \Theta_i $ and
\begin{equation}
	\label{eq:masses}
	\sum_{i=1}^{N} \int_{\Theta_i} u_n \mathrm{d}S = M_n.
\end{equation}

Choose charts $x_i: \Theta_i \rightarrow B_{R_i}$ such that $x_i(p_i) = 0$. Around each $p_i$, it holds 
\[
g(x_i) = \gmax - x_i^{\top}A_i x_i + r_i(x_i),
\]
where $|r_i(x_i)| \leq C|x_i|^{3}$. Moreover, we can assume, up to making each $\Theta_i$ smaller, that
\begin{align*}
	g(x_i) \leq \gmax - \frac{1}{2}x_i^{\top}A_i x_i.
\end{align*}

By repeating the arguments in Section \ref{sect:hom_subseq} at each $ p_i $, we may choose subsequences such that 
\[
	\frac{1}{\beta_{M_n}^{3}}\int_{\Theta_i} u_{n} \rightarrow \overline{\alpha}_i ,~~ i=1,\hdots N.
\]
Therefore, \eqref{eq:masses} implies that $M_n^{-1/3} \beta_{M_n}$ converges to some $ \overline{\alpha} >0 $.

Define the rescalings 
\[
	v^{n}_i(y)= M_n^{-2/3} u_{n}(x_i^{-1}(M_n^{1/6}y)),
\]
which satisfy equations of the form \eqref{eq:U}. Hence, there exist $ \{r_i\}_{i=1}^{N}$ such that $ \supp(v^{n}_i) \subset B_{r_i} $ for all $ n $.

In terms of $v_i^{n}$, equation \eqref{eq:masses} becomes
\begin{equation}
	\label{eq:masses_vn}
	\sum_{i=1}^{N} \int_{B_{r_i}}v^{n}_i \mu_i^{n}\mathrm{d}y  = 1,
\end{equation}
where $ \mu^n_i $ is the surface density in a coordinate system defined in $ \Theta_i $.

Theorem \ref{mainthm:isolated_hom} implies that $v_{i}^{n} \rightharpoonup v_i$ weakly in $ W^{2,2}(\RR^{2}) $ and strongly in $ L^{1}(\RR^{2}) $. Hence, we can take the limit in \eqref{eq:masses_vn} and obtain 
\begin{equation}
\label{eq:mass_limit}
\sum_{i=1}^{N} \int_{B_{r_i}}v_i \mathrm{d}y = 1
\end{equation}

The equation satisfied by $v_i$ is 
\[
	-\Delta v_i = \left( \overline{\alpha} \gmax - \frac{1}{\gmax} y^{\top} A_i y \right) \mathbb{1}_{\{v_i>0\}}.
\]
Define $s_i \in (0, 1/2]$ as 
\[ 
	\frac{1}{\text{tr}(A_i)} A_i \sim \begin{pmatrix}
		s_i & 0 \\
		0 & 1 - s_i
\end{pmatrix}
\]
up to an orthogonal transformation. Then, 
\[
	v_i(y) = \frac{\overline{\alpha}^{2} \gmax^{3}}{\text{tr}(A_i)} \Phi \left( \sqrt{\frac{\text{tr}(A_i)}{\gmax^{2} \overline{\alpha}}} y ;~ s_i\right) 
\]
where $\Phi(z;~s_i)$ is the solution to \eqref{eq:obst_reduced} with $\Tilde{A} = \begin{pmatrix} s_i & 0 \\ 0 & 1-s_i \end{pmatrix}$. Moreover, we define
\begin{equation}
	\label{eq:mass_def}
	M(s) := \int_{\RR^{2}} \Phi(x;s) \mathrm{d}x. 
\end{equation}

Plugging the expression for $v_i$ in \eqref{eq:mass_limit} yields
\begin{align*}
	\sum_{i=1}^{N} \frac{\gmax^{5} \overline{\alpha}^{3}}{\text{tr}(A_i)^{3/2}} \int_{\RR^{2}} \Phi(z;~ s_i) \mathrm{d}z = 1, 
\end{align*}
from which we recover an expression for $\overline{\alpha}$
\[
	\overline{\alpha} = \left( \sum_{i=1}^{N} \frac{\gmax^{5} M(s_i)}{\text{tr}(A_i)^{3/2}} \right)^{-1/3}.
\]
Putting this formulas together yields \eqref{eq:lambda_i} 

To conclude this subsection, we would like to express the mass of $\Phi$ as a function of $s$. This will let us compare the concentration of $u$ around maximum points of the same homogeneity, using formulas \eqref{eq:v_hom} and \eqref{eq:mass_limit}.

\begin{lem}
	The mass at a point $M(s)$ defined by \eqref{eq:mass_def} satisfies equation \eqref{eq:mass}. Moreover, the function $s\mapsto M(s)$ is decreasing.
\end{lem}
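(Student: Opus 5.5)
The plan is to compute $M(s)$ directly from the explicit form of $\Phi$ obtained in Subsection~\ref{ssect:general_case}, and then to study monotonicity through the dependence of $C_0$ and $\beta_2$ on $s$.

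\textbf{Step 1: compute the mass.} Write $b_1=\frac14-\beta_2$, $b_2=\frac14+\beta_2$ and $t(x)=b_1x_1^2+b_2x_2^2$. We showed that
\[
\Phi=\Big(C_0-t+\tfrac{1}{4C_0}t^2\Big)\mathbb{1}_{\{t<2C_0\}}.
\]
Since $\beta_2=\beta(s)$ by \eqref{eq:beta2} and \eqref{eq:beta}, I first check that $b_1,b_2>0$, i.e.\ $|\beta(s)|<\frac14$. This will follow from the bound on $C_0$ proved in Step~2, and it also makes $\{\Phi>0\}$ a genuine ellipse.

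Next I substitute $x_i=z_i/\sqrt{b_i}$, which has Jacobian $(b_1b_2)^{-1/2}=(\frac1{16}-\beta^2)^{-1/2}$ and turns $t$ into $|z|^2$. Passing to polar coordinates and setting $u=r^2$ gives
\[
M(s)=\frac{\pi}{\sqrt{\frac1{16}-\beta^2}}\int_0^{2C_0}\Big(C_0-u+\frac{u^2}{4C_0}\Big)\,\mathrm{d}u .
\]
This is an elementary integral, and it yields a constant multiple of $C_0^2/\sqrt{\frac1{16}-\beta^2}$. My quick evaluation gives $\frac{2\pi}{3}$ rather than the $\frac{73\pi}{96}$ in \eqref{eq:mass}, so I will recheck this constant carefully. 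It plays no role in the monotonicity argument.

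\textbf{Step 2: monotonicity.} Put $q=\frac{1-2s}{3}$. Then $q$ ranges over $[0,\frac13)$ and is strictly decreasing in $s$. In this variable $C_0$ is the smaller root of \eqref{eq:quad_C0},
\[
F(C_0,q):=\tfrac18-\tfrac{C_0}{4}+C_0^2q^2=0 .
\]
The discriminant $\frac1{16}-\frac{q^2}{2}$ is positive because $\frac{q^2}{2}<\frac1{18}<\frac1{16}$. Choosing the smaller root gives $\partial_{C_0}F=-\frac14+2C_0q^2=-\sqrt{\frac1{16}-\frac{q^2}{2}}<0$.

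Implicit differentiation then gives
\[
\frac{\mathrm{d}C_0}{\mathrm{d}q}=\frac{2C_0^2q}{\sqrt{\frac1{16}-\frac{q^2}{2}}}\ge 0 .
\]
So $C_0$ is nondecreasing in $q$, strictly for $q>0$. Hence $\beta=C_0q$ is increasing in $q$, and $\frac1{16}-\beta^2$ is decreasing in $q$. It follows that $M$ is increasing in $q$, and therefore decreasing in $s$. The endpoint $s=\frac12$ corresponds to $q=0$, where $C_0=\frac12$, which matches the radial solution \eqref{eq:solution_radial}; continuity covers this endpoint.

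\textbf{Main obstacle.} The delicate point is verifying $\beta^2<\frac1{16}$ uniformly in $s$, together with the correct root selection. I would use $2C_0q^2<\frac14$, which holds for the smaller root, to get $C_0q<\frac1{8q}$. This is not enough when $q$ is small, so I would instead bound $C_0$ directly from the root formula: the formula gives $C_0\le 1$ for $q\le\frac13$, and then $\beta=C_0q\le\frac13<\frac14$ fails. So I need the sharper estimate. At $q=\frac13$ I will compute $C_0=9\big(\frac18-\frac12\sqrt{\frac1{16}-\frac1{18}}\big)$ explicitly, check that $\beta<\frac14$ there, and then invoke the monotonicity of $\beta$ in $q$ to conclude $\beta<\frac14$ for all $q\in[0,\frac13)$.
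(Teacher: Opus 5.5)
Your argument is correct and follows the same route as the paper. Both proofs use a linear change of variables that turns $\{\Phi>0\}$ into a disc to compute the mass, and both derive the monotonicity of $M$ from the monotonicity of $C_0$ and $\beta$ in $s$. Your implicit differentiation of \eqref{eq:quad_C0} in $q$ is a tidier version of the paper's explicit differentiation of \eqref{eq:C0}. It also lets you control the sign of $\frac1{16}-\beta^2$, which the paper's proof of the lemma does not address.

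\textbf{The constant.} Do not try to reconcile your constant with \eqref{eq:mass}. Your value $\frac{2\pi}{3}$ is the right one, and $\frac{73\pi}{96}$ is an error in the paper. With $y_i=\sqrt{k_i/C_0}\,x_i$ one gets $\Phi=C_0\bigl(1-\tfrac{|y|^2}{2}\bigr)^2$ on $\{|y|<\sqrt2\}$. The paper writes $B_{\sqrt2/2}$ and $|y|^2/4$ where it should have $B_{\sqrt2}$ and $|y|^4/4$. Then $\int_{B_{\sqrt2}}\bigl(1-\tfrac{|y|^2}{2}\bigr)^2\,\mathrm{d}y=\tfrac{2\pi}{3}$.

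The radial case $s=\frac12$ confirms this. There $C_0=\frac12$ and $\beta=0$, and integrating \eqref{eq:solution_radial} directly also gives $\frac{2\pi}{3}$. The constant cancels in \eqref{eq:lambda_i}, so nothing downstream is affected.

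\textbf{The endpoint in your last paragraph.} At $q=\frac13$ the discriminant is $\frac1{16}-\frac1{18}=\frac1{144}$, so $C_0=\frac34$ and $\beta=\frac14$ exactly, not $\beta<\frac14$. This is harmless. The value $q=\frac13$ corresponds to the excluded $s=0$, and you have already shown that $\beta$ is strictly increasing in $q$. That gives $\beta<\frac14$ on $[0,\frac13)$.

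\textbf{A shortcut.} Since $\beta=C_0q$, equation \eqref{eq:quad_C0} reads $\beta^2=\frac{C_0}{4}-\frac18$. Hence $\frac1{16}-\beta^2=\frac{3-4C_0}{16}$ and $M=\frac{8\pi}{3}\,C_0^2(3-4C_0)^{-1/2}$. As $q$ runs over $[0,\frac13)$, $C_0$ increases from $\frac12$ towards $\frac34$. The positivity of $\frac1{16}-\beta^2$ and the monotonicity of $M$ then follow at once.
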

\begin{proof}
Denote $k_1 = \frac{1}{4} - \beta_2$ and $k_2 = \frac{1}{4} + \beta_2$. A direct computation yields
\begin{align*}
	\int_{\RR^{2}} \Phi \mathrm{d}x &= \int_{\{\Phi>0\}} C_0 - \left( k_1x_1^{2} + k_2 x_2^{2} \right) + \frac{1}{4C_0}\left( k_1x_1^{2} + k_2 x_2^{2} \right)^{2} \mathrm{d}x \\
				     &= C_0 \int_{\{\Phi>0\}} 1 - \left( \left( \sqrt{\frac{k_1}{C_0}} x_1 \right)^{2} + \left( \sqrt{\frac{k_2}{C_0}} x_2 \right)^{2} \right) + \frac{1}{4}\left( \left( \sqrt{\frac{k_1}{C_0}} x_1 \right)^{2} + \left( \sqrt{\frac{k_2}{C_0}} x_2 \right)^{2} \right)^{2} \mathrm{d}x.
\end{align*}
Define $y_1 = \sqrt{\frac{k_1}{C_0}} x_1$,	$ y_2 = \sqrt{\frac{k_2}{C_0}} x_2$, then, $\{\Phi > 0\} = \left\{ y_1^{2}+ y_2^{2} < \frac{1}{2} \right\} $. Hence, by the change of variables formula
\[
	M(s) = \frac{C_0^{2}}{\sqrt{k_1 k_2}} \int_{B_{\sqrt{2}/2}} 1 - |y|^{2} + \frac{|y|^{2}}{4} \mathrm{d}y = \frac{\kappa C_0^{2}}{\sqrt{k_1 k_2}},  
\]
where $\kappa := \int_{B_{\sqrt{2}/2}} 1 - |y|^{2} + \frac{|y|^{2}}{4} \mathrm{d}y = \frac{73\pi}{96}$, which is exactly \eqref{eq:mass}. Differentiating \eqref{eq:C0} with respect to $s$, yields, after some algebraic manipulation
\[
	C_0'(s) = \frac{9}{2(1-2s)^{3}} \left( \sqrt{\frac{1}{16} - \frac{1}{18} (1-2s)^{2}} - \left( \frac{1}{4} + \frac{7}{9}(1-2s)^{2} \right) \right) \left( \frac{1}{16} - \frac{1}{18}	(1-2s)^{2} \right)^{-1/2} 
\]
which is nonpositive for all $s \in [0, \frac{1}{2}]$. On the other hand, we have 
\begin{align*}
	\beta_2'(s) &= \frac{1}{3} \left( (1-2s) C'_0 -2C_0 \right) \leq 0, \\
	k_1'(s) &= -k_2'(s) = -\beta_2'(s). 
\end{align*}

Differentiating $ M(s) $ yields
\[
M'(s) = \kappa \left( \frac{2C_0 C_0'(s)}{\sqrt{k_1 k_2}} - \frac{C_0^{2}}{2 \left( k_1k_2 \right)^{3/2}} \left( k_1 \beta_2' - k_2 \beta_2' \right)  \right) =  \kappa \left( \frac{2C_0 C_0'(s)}{\sqrt{k_1 k_2}} + \frac{C_0^{2} (k_2 - k_1)}{2 \sqrt{k_1k_2}} \beta_2'(s)  \right) \leq 0
\]
and thus the mass is decreasing with $s$.
	
\end{proof}

\subsection{Regularity of the free boundary}

Let us now prove the regularity of the free boundary for small mass, as claimed in Theorem \ref{mainthm:morse}.

First, let us note that, near the free boundary of the limit solution, the right hand side of \eqref{eq:obst_reduced} is nondegenerate, that is, it is bounded above by a negative constant. For this, it suffices to compare the semiaxes of the ellipses $ \{ sx_1^2 + (1-s)x_2^2  = 1 \} $ and $ \left\{ \left( \frac14 - \beta_2\right) x_1^2 + \left( \frac14 + \beta_2 \right)x^2_2 = 2C_0 \right\}. $

Inspecting equation \eqref{eq:C0}, it is easy to see that $ C_0 $ may be extended continuously to a function defined for all $ s\in (0,1) $, with the property that $ C_0(s) = C_0(1-s) $ for every $ s $. A direct computation yields
\begin{align*}
	\frac{d}{ds}\left( \frac{1}{2C_0} \left(\frac14 - \beta_2\right) \right) &= - \left( \frac{2C_0 - C_0' (1-2s)}{6C_0} +\left(\frac14 - \beta_2\right) \frac{C_0'}{2C_0^2} \right) \\
										 &=\left( \frac{1}{3} - \frac{C'_0}{8C_0^{2}} \right). 
\end{align*}
Differentiating \eqref{eq:quad_C0} with respect to $ s $, we get the following formula for $ C_0' $:
\[C_0' = \frac{16C_0^2(1-2s)}{8C_0(1-2s)^2 - 9}.\]
Plugging this formula in the derivative above we get
\begin{align*}
	\frac{d}{ds}\left( \frac{1}{2C_0} \left(\frac14 - \beta_2\right) \right) &= \frac{1}{3} - \frac{2 (1-2s)}{8C_0(1-2s)^2 - 9}\\
										 &= \frac{1}{3} + \frac{2(1-2s)}{3 \sqrt{9 - (1 - 2s)^2}},
\end{align*}
where we used formula \eqref{eq:C0}. We see that $ \frac{d}{ds}\left( \frac{1}{2C_0} \left(\frac14 - \beta_2\right) \right)$ is decreasing in $ (0,1) $, therefore
\[ -1 < \frac{1}{3} \left(1 - \sqrt{2}  \right) \leq\frac{d}{ds}\left( \frac{1}{2C_0} \left(\frac14 - \beta_2\right) \right) \leq \frac{1}{3} \left(1 + \sqrt{2}  \right) < 1 , \]
This means that $ \frac{1}{2C_0}\left( \frac14 -\beta_2\right) - s $ is decreasing in $ (0,1) $. Moreover
\[
\lim_{s\rightarrow 0^+}\left( \frac{1}{2C_0s} \left(\frac14 - \beta_2\right) \right) = \lim_{s\rightarrow 0^+} \frac{(1-2s) (14 - 16s - 2 \sqrt{1 + 16s - 16s^2})}{3(3 -\sqrt{1 + 16s - 16s^2}) (1 + \sqrt{1 + 16s - 16s^2})} = 1,
\]
which means that $ \frac{1}{2C_0} \left(\frac14 - \beta_2\right) < s$ for $ s \in (0,\frac12) $. 

On the other hand, we have that 
\[\frac{1}{2C_0(s) (1-s)} \left( \frac{1}{4} + \beta_2(s) \right) = \frac{1}{2C_0(1-s) (1-s)} \left( \frac{1}{4} - \beta_2(1-s)\right)  \]
by the symmetry of $ C_0 $ and \eqref{eq:beta2}. Thus, we get that $ \frac{1}{2C_0} \left( \frac{1}{4} + \beta_2 \right) < 1-s $ for every $ s\in (0,\frac{1}{2}) $. 

Therefore, both semiaxes of $ \left\{ \left( \frac14 - \beta_2\right) y_1^2 + \left( \frac14 + \beta_2 \right)y_2 = 2C_0 \right\} $ are larger than those of $ \{ sy_1^2 + (1-s)y_2^2  = 1 \} $. Hence, there exists some $ \lambda_0 > 0$ such that $ y^{\top}\Tilde{A} y> 1 + \lambda_0 $ in a neighborhood of the free boundary. By rescaling and rotating, this holds for every $ A $ and $ \overline{\alpha} $.

In order to obtain the regularity claim, it suffices to apply Proposition \ref{prop:hom_sandwich} with $ \delta_0 > 0 $ such that $ \partial \{v_{\overline{\alpha} - \delta} \} $ is at a positive distance, say $ d $ to $ \left\{ \frac{\beta_M}{m^{1/3}} - y^{\top}A y + o(1) = 0 \right\} $ for $ m $ smaller that some $ m_0 $, which is possible given the above discussion.

Denote $S_0 =  \{v_{\overline{\alpha} + \delta_0}> 0 \}\setminus \{v_{\overline{\alpha} - \delta_0}> 0 \}$. Note that for any $ \delta \leq \delta_0 $ we have
\[
	\partial \{U_{m_{\delta}} > 0\} \subset \{v_{\overline{\alpha} + \delta}> 0 \}\setminus \{v_{\overline{\alpha} - \delta}> 0 \} =: S_{\delta} \subset S_0,
\]
where $ m_{\delta} $ is the one given by Proposition \ref{prop:hom_sandwich}. Fix any $ \varepsilon \in (0, \frac{1}{8}) $, and apply a rescaled version of Proposition \ref{prop:reg} in each ball of radius $ d/2 $ with center in $ S_0 $ to get some $ \rho_0 $ such that the conclusion of the proposition holds. Then, there is some $ \delta_1 $ such that, for any $y \in S_{\delta_1} $, we have
\[\frac{|\{U_{m} = 0\} \cap B_{\rho_0 / 2}(y)|}{|B_{\rho_0 / 2}(y)|} \geq \frac{|\{v_{\overline{\alpha} + \delta_1} = 0 \} \cap B_{\rho_0/2}(y) |}{|B_{\rho_0/2}(y)|} > \varepsilon\]
for any $ m\leq m_{\delta_1} $. This yields the $ C^1 $ regularity of $ \partial \{U_{m} > 0\} $ for small $ m $. 

\section{An isolated maximum described to the leading order by an nonvanishing anisotropic function}
\label{sect:inhom}

In this section, we prove Theorem \ref{mainthm:inhom}. The structure is similar to the preceding section, with additional attention given to the issue of convergence.

Choose a local chart $ (x,\Theta) $ such that $x(p) = 0$ and $ x(\Theta) = B_R $, and we may write 
\[g(x) = \gmax  - a x_1^{4} - b x_2^{2}+ r(x) \faskip \forall x\in B_R,\]
with $r(x) = o(x_1^{4} +x_2^{2}) $. Up to making the chart smaller, we may assume 
\begin{equation} 
	\label{eq:hypothesis_degenerate}
	g(x) \leq \gmax -\frac{a x_1^4 + b x_2^2}{2} \faskip \forall x\in B_R.
\end{equation}

Fix a sequence $ M_n \rightarrow 0 $. As in Section \ref{sect:hom}, we retain the notation for $ u_n $, $ \alpha_n $ and so on.

For $n$ large enough we have $u_n |_{\partial B_r} = 0$, and for $ x= (x_1, x_2) \in B_r $, denote \[y = (m_n^{-1/11} x_1, m_n^{-2/11}x_2) \] and define the rescaling
\[
	u_n(x_1,x_2) = \gmax^2 m_n^{8/11} U_n(\gamma^{-1/4}m_n^{-1/11} x_1,  \gamma^{-1/2}m_n^{-2/11} x_2).
\]
In this case, $U_n$ is supported in the set
\[
	O_n:= \{y:~ (\gmax^{1/4}m_n^{1/11} y_1)^{2} + (\gmax^{1/2}m_n^{2/11}y_2)^{2} < R\}.
\]
For the rest of this section, we will denote $\varepsilon_n := m_n^{1/11}$.

As in Section \ref{sect:hom}, we obtain that 
\begin{align*}
	-\divg( H_{n}(y) \nabla U_{n}(y)) = \left( \frac{\beta_{n}}{\varepsilon_n^{4}} - ay_1^{4} - by_2^{2} + R_n(y)\right)\mathbb{1}_{\{U_{n} >0 \}} ~ \text{ in } O_{n} 
\end{align*}
where 
\begin{align*}
	H_{n}(y) &= \begin{pmatrix}
			\gmax^{1/2}\varepsilon_n^{2} h_{11}(\varepsilon_n y_1, \varepsilon_n^{2} y_2) & \gmax^{1/4}\varepsilon_n h_{12}(\varepsilon_n y_1, \varepsilon_n^{2} y_2) \\
			\gmax^{1/4}\varepsilon_n h_{12}(\varepsilon_n y_1, \varepsilon_n^{2} y_2) &  h_{22}(\varepsilon_n y_1, \varepsilon_n^{2} y_2)
		\end{pmatrix}, \\ 
		R_n(y) &= \frac{R(\gmax^{1/4}\varepsilon_n y_1, \gmax^{1/2}\varepsilon_n^{2} y_2)}{\varepsilon_n^{4}}.
\end{align*}
Thus, for any $K \subset \RR^{2}$ compact and $ \varphi \in C^{2}(K) $, we have 
\begin{equation}
	\label{eq:deg_asympt}
	-\divg(H_n \nabla \varphi) = -h_{22}(0) \partial_{22}\varphi + O(\varepsilon_n) ~ \text{ in } K.
\end{equation}

For the subsequent analysis, we remark that, without loss of generality, we may take $ a = b = 1 $. Indeed, it suffices to modify the rescaling as
\[u_n(x_1, x_2) = \gmax^2 (m_n^{8} a^{2}b)^{1/11} \Tilde{U}_n\left( \left( \frac{b^{3/4}}{a^{5/4}}m_n \right)^{1/11} x_1, \left( \frac{a^{1/4}}{b^{5/4}}m_n \right)^{2/11} x_2  \right), \]
and a direct computation shows that 
\[-\divg(\Tilde{H}_n \nabla \Tilde{U}_n) = \left( \frac{\beta_n}{(ab^{6})^{1/11} m_n^{4/11}} - y_1^{4} - y_2^{2} + \Tilde{R}_n(y) \right)\mathbb{1}_{\{\Tilde{U}_n > 0\}}, \]
with $ \Tilde{H}_n $, $ \Tilde{R}_n $ defined in an analogous way, and $ \Tilde{H}_n $ also satisfying \eqref{eq:deg_asympt}. In a similar way, we can assume $ h_{22}(0) = 1 $. 

\subsection{The limit problem}
\label{ssect:limit_deg}

We will first study the limit problem, which in view of \eqref{eq:deg_asympt} should have the form 
\begin{equation}
	\label{eq:deg_limit_prob}
	\begin{cases}
		- \partial_{y_2 y_2 } V = \left( \overline{\alpha} - y_2^{2} - y_1^{4} \right) \mathbb{1}_{\{V>0\}} & \text{ in } \RR^{2}, \\ 
		V \geq 0, ~~ V \in L^{1}(\RR^{2}),
	\end{cases}
\end{equation}
for some fixed $ \overline{\alpha} > 0 $. This problem can be brought into the normalized form 
\begin{equation}
\label{eq:limit_deg}
\begin{cases}
	- \partial_{y_2 y_2 } \Phi = \left( 1 - y_2^{2} - y_1^{4} \right) \mathbb{1}_{\{\Phi>0\}} & \text{ in } \RR^{2}, \\ 
	\Phi \geq 0, ~~ \Phi \in L^{1}(\RR^{2}),
\end{cases}
\end{equation}
by defining
\begin{equation}
	\label{eq:deg_V}
	V(y) = \overline{\alpha}^{2} \Phi \left( \frac{1}{\overline{\alpha}^{1/4}} y_1, \frac{1}{\overline{\alpha}^{1/2}} y_2 \right).
\end{equation}

Let us first state the following uniqueness result:
\begin{lem}
	\label{lem:deg_uniqueness}
	There is at most one solution $ V \in L^{2}(\RR^{2}) $ to \eqref{eq:deg_limit_prob} such that $ \partial_2 V\in L^{2}(\RR^{2}) $.
\end{lem}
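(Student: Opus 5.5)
The plan is to adapt the energy argument from the uniqueness part of Proposition \ref{prop:existence}, with only $y_2$-derivatives present. First I fix the weak formulation adapted to the class in the statement. A solution is a nonnegative $V\in L^2(\RR^2)$ with $\partial_2 V\in L^2(\RR^2)$ satisfying the variational inequality
\[
\int_{\RR^{2}} \partial_2 V\, \partial_2(\varphi - V) + (y_1^4+y_2^2-\overline{\alpha})(\varphi - V)\,\mathrm{d}y \geq 0
\]
for all admissible $\varphi\ge0$. Equivalently, a.e. in $y_1$, the slice $V(y_1,\cdot)$ solves the one-dimensional obstacle problem with right-hand side $\overline{\alpha}-y_1^4-y_2^2$. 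The test class should be the natural one: $\varphi\ge0$, $\varphi,\partial_2\varphi\in L^2$, and $(y_1^4+y_2^2)\varphi\in L^1$.

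Next I test the inequality for $V_1$ with $\varphi=V_2$ and the inequality for $V_2$ with $\varphi=V_1$, and add. The zero-order terms cancel exactly, since the weights $(y_1^4+y_2^2-\overline{\alpha})(V_2-V_1)$ and $(y_1^4+y_2^2-\overline{\alpha})(V_1-V_2)$ are opposite. This leaves
\[
\int_{\RR^2}|\partial_2(V_1-V_2)|^2\,\mathrm{d}y\le 0 .
\]
Hence $\partial_2(V_1-V_2)=0$, so $W:=V_1-V_2$ depends only on $y_1$, i.e. $W(y)=w(y_1)$. Then
\[
\int_{\RR^2}W^2\,\mathrm{d}y=\int_{\RR} w(y_1)^2\Big(\int_{\RR}\mathrm{d}y_2\Big)\mathrm{d}y_1 ,
\]
which is finite only if $w=0$ a.e. Therefore $V_1=V_2$. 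Here the hypothesis $V\in L^2$ plays the role that compact support played in Proposition \ref{prop:existence}, since no Poincaré-type control in $y_1$ is available.

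The main obstacle is justifying that each $V_j$ is an admissible test function for the other inequality. In particular, the integrals $\int (y_1^4+y_2^2)V_j$ must be finite, because the integrability assumptions do not give this directly. I would handle this through the slicewise description. For fixed $y_1$, the slice $v=V(y_1,\cdot)\ge0$ with $v\in L^2(\RR)$ and $v'\in L^2(\RR)$ satisfies $-v''\le 0$ wherever $|y_2|^2>\overline{\alpha}-y_1^4$. A nonnegative convex function that is $L^2$ on a half-line must vanish there. Applying this at each end gives $\supp V(y_1,\cdot)\subset\{y_2^2\le\overline{\alpha}-y_1^4\}$, and hence the slice is identically zero when $y_1^4>\overline{\alpha}$. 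Consequently $V$ is supported in the compact set $\{y_1^4+y_2^2\le\overline{\alpha}\}$. All weights are then bounded, and the cross-testing above is legitimate, possibly after first testing with truncations $\min(V_j,k)$, or with cutoffs $\eta_R V_j$ and letting $R\to\infty$. Once compact support is established, the energy argument closes immediately.
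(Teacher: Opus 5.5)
Your core argument is exactly the paper's proof. You cross-test the two variational inequalities so the zeroth-order terms cancel, conclude $\partial_2(V_1-V_2)=0$, and then use $V_1-V_2\in L^2(\RR^2)$ to rule out a nonzero function of $y_1$ alone.

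You go further than the paper by noticing that the cross-test needs the zeroth-order integrals to be finite. However, the step you use to secure this is false. A nonnegative convex function in $L^2$ of a half-line need not vanish there; take $e^{-t}$ on $(0,\infty)$. Only on the whole line do convexity and integrability force $v\equiv 0$. So your argument correctly gives $V=0$ for $y_1^4>\overline{\alpha}$, but nothing more.

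The resulting claim $\supp V\subset\{y_1^4+y_2^2\le\overline{\alpha}\}$ is in fact wrong. For $\overline{\alpha}=1$ the paper's explicit solution \eqref{eq:deg_sol} is positive on $\{y_2^2/3+y_1^4<1\}$. For example, $\Phi(0,3/2)=3/64>0$ although $y_1^4+y_2^2=9/4>1$. As usual for obstacle problems, the positivity set extends into the region where the source is negative. A correct slice analysis goes as follows. On $(a,\infty)$ with $a=(\overline{\alpha}-y_1^4)^{1/2}$, the slice is nonincreasing, its positivity set is an interval $(a,s)$ with $s<\infty$, and $v(a)\ge (s-a)^4/4$. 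This only bounds the $y_2$-extent of each slice by its supremum, which your hypotheses do not control uniformly in $y_1$.

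The fix is to drop the support claim and localize the test functions instead. Let $W=V_1-V_2$ and $\eta_R=\eta(\cdot/R)$, with $0\le\eta\le1$ and $\eta=1$ on $B_1$. Test the inequality for $V_1$ with $\varphi=(1-\eta_R)V_1+\eta_RV_2$, and the one for $V_2$ with $\varphi=(1-\eta_R)V_2+\eta_RV_1$. Both are nonnegative with $\varphi,\partial_2\varphi\in L^2$. Moreover, $\varphi-V_j=\mp\eta_RW$ has compact support, so every term is finite and the zeroth-order terms cancel exactly, with no moment bound on $V_j$. This also shows the natural test class should constrain $\varphi-V$ rather than $\varphi$. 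Adding the two inequalities yields
\[
\int_{\RR^2}\eta_R|\partial_2W|^2\,\mathrm{d}y\le-\int_{\RR^2}W\,\partial_2W\,\partial_2\eta_R\,\mathrm{d}y\le\frac{C}{R}\|W\|_{L^2}\|\partial_2W\|_{L^2}.
\]
Letting $R\to\infty$ (Fatou) gives $\partial_2W=0$, and your final step finishes the proof. With this replacement your argument is complete, and more careful than the paper's, which performs the cross-test without comment.
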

\begin{proof}
	The weak formulation of \eqref{eq:limit_deg} reads
	\begin{equation}
		\label{eq:deg_limit_weak}
		\int_{\RR^{2}} \partial_2 V \partial_2(\varphi - V) \geq \int_{\RR^{2}} \left( 1 - y_2^{2} - y_1^{4} \right) (\varphi - V) \faskip \forall \varphi \in L^2(\RR^{2}), \text{ s. t. } \partial_2 \varphi \in L^2(\RR^2),~ \varphi \geq 0.
	\end{equation}
	If $ V_1, V_2 $	are two solutions, then, by testing their corresponding equations with each other, we get 
	\[\int_{\RR^{2}} (\partial_2(V_1 - V_2))^{2} \mathrm{d}y = 0, \]
in other words, $ \partial_2 (V_1 - V_2) = 0 $ a. e. in $ \RR^2 $.

Hence, there exists $ f:\RR \rightarrow \RR $ such that $V_1(y_1, y_2) = V_2(y_1,y_2) + f(y_1) $. Thus, $f = V_1 - V_2 \in L^{2}(\RR^{2})$. Suppose $f\not \equiv 0$, then
\[\int_{\RR} \int_{\{f > 0\}} (f(y_1))^2 \mathrm{d}y_1 \mathrm{d}y_2 = \infty,\]
a contradiction. Therefore $ f\equiv 0 $ and $ V_1 = V_2 $ a.e. in $\RR^2$.
\end{proof}

Note that \eqref{eq:limit_deg} can be solved explicitly. Indeed, in $\{\Phi > 0\}$ one must have 
\begin{equation}
	\label{eq:Phi_deg1}
	\Phi(y_1, y_2) = \frac{y_2^{4}}{12} +(y_1^{4} -1) \frac{y_2^{2}}{2} + f(y_1),
\end{equation}
for some $f: \RR \rightarrow \RR$ to be determined. On $\partial \{ \Phi>0\}$ we have
\begin{align*}
	\frac{y_2^{4}}{12} + ( y_1^{4} -1) \frac{y_2^{2}}{2} + f(y_1) &= 0, \\
	\frac{y_2^{3}}{3} + ( y_1^{4} -1) y_2 &= 0,
\end{align*}
which suggests $\partial \{ \Phi>0\} = \left\{\frac{ y_2^{2}}{3} + y_1^{4} = 1 \right\}$. On this set, the first equation can be rewritten as
\[
	f(y_1) = \frac{ y_2^{4}}{4},
\]
but, written in terms of $y_1$, this becomes
\[
	f(y_1) = \frac{3}{4}(1 -  y_1^{4})^{2}.
\]

Hence, we can plug this expression in \eqref{eq:Phi_deg1} to get
\[
	\Phi(y_1,y_2) = \frac{3}{4} - \frac{3}{2} \left( \frac{y_2^{2}}{3} +  y_1^{4} \right) + \frac{3}{4} \left( \frac{y_2^{2}}{3} + y_1^{4} \right)^{2}.
\]
One can readily see that $\Phi > 0$ in $\left\{\frac{ y_2^{2}}{3} +  y_1^{4} < 1\right\}$. Thus, the solution to \eqref{eq:limit_deg} is given by 
\begin{equation}
	\label{eq:deg_sol}
	\Phi(y_1, y_2) = \begin{cases}
	\frac{3}{4} - \frac{3}{2} \left( \frac{ y_2^{2}}{3} + y_1^{4} \right) + \frac{3}{4} \left( \frac{y_2^{2}}{3} + y_1^{4} \right)^{2} & \text{ in } \left\{\frac{y_2^{2}}{3} + y_1^{4} \leq 1\right\}, \\
	0 & \text{ outside}.
\end{cases}
\end{equation}

Define $ V_{\overline{\alpha}} $ by the left-hand side of \eqref{eq:deg_V} where $ \Phi $ is defined by \eqref{eq:deg_sol}. Note that, for $ \alpha, \beta \in [c_0^{-1}, c_0] $, there exists $C = C(c_0) > 0 $ such that 
\begin{equation}
\label{eq:cont_infty}
	\|V_{\alpha} - V_{\beta}\|_{L^{\infty}(\RR^{2})} \leq C |\alpha - \beta|.
\end{equation}

\subsection{Convergence}

In this subsection we will justify that $ (U_n)_{n\in\NN} $ converges in the sense specified in Theorem \ref{mainthm:inhom} to a solution to \eqref{eq:deg_limit_prob}. As in Section \ref{sect:hom_subseq}, we will show uniform estimates on the leading term in the right hand side and on $ (U_n)_{n\in \NN} $. 

\begin{prop}
	\label{prop:deg_bound_q}
	There exists $C_0> 0$ such that $ \frac{\beta_{n}}{\varepsilon_n^{4}} \leq C_0 $ for all $n \in \NN$.
\end{prop}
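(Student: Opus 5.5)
We argue by contradiction, adapting the proof of Proposition \ref{prop:nondeg_q_bound}. Suppose that along a subsequence $q_n := \beta_n/\varepsilon_n^{4} \to \infty$. In the rescaled variables the right-hand side is $q_n - y_1^{4} - y_2^{2} + R_n(y)$, where $R_n \to 0$ uniformly on compact sets by the hypothesis $r(x) = o(x_1^4 + x_2^2)$. Hence, on the fixed rectangle $Q := [-1,1]\times[-1,1]$, the right-hand side is at least $q_n/2$ for all large $n$.

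The new difficulty is the operator. The limit operator is the degenerate $-\partial_{22}$, while $H_n$ has $11$-entry of order $\varepsilon_n^2$ and $12$-entry of order $\varepsilon_n$. A radial barrier is therefore not suitable, and we use a barrier depending essentially on $y_2$. Take
\[
	\varphi(y) = \frac{q_n}{8}\,\eta(y_1)\,(1 - y_2^{2}),
\]
with $\eta \in C^2_c((-1,1))$, $0\le \eta\le 1$, and $\eta \equiv 1$ on $[-1/2,1/2]$. By \eqref{eq:deg_asympt}, with $h_{22}(0)=1$,
\[
	-\divg(H_n\nabla\varphi) = \frac{q_n}{4}\eta(y_1) + q_n\,O(\varepsilon_n).
\]
Here the $O(\varepsilon_n)$ term is uniform, since $\varphi/q_n$ is a fixed $C^2$ function on $Q$. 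So for $n$ large $-\divg(H_n\nabla\varphi) \le q_n/2 \le -\divg(H_n \nabla U_n)$ wherever $U_n>0$ in $Q$. To be careful, this step needs $U_n \ge 0$ together with the Euler–Lagrange inequality $-\divg(H_n\nabla U_n) \ge (\text{rhs})\ge q_n/2$ in the whole of $Q$. This holds because the right-hand side is positive there, and the variational inequality $-\divg(H_n\nabla U_n)\ge$ rhs holds everywhere, exactly as used in Proposition \ref{prop:nondeg_q_bound}.

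Since $\varphi = 0 \le U_n$ on $\partial Q$ and $H_n$ is uniformly elliptic on $Q$ for each fixed $n$, the weak comparison principle for $-\divg(H_n\nabla\cdot)$ gives $U_n \ge \varphi$ in $Q$. Only ellipticity for each fixed $n$ is needed here, not uniformity in $n$. In particular $U_n \ge 3q_n/32$ on $[-1/2,1/2]^2$.

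The mass normalization then gives the contradiction. In the new variables, $\int U_n \mu_n\, dy = \gmax^{-c}$ for a fixed power $c$, with $\mu_n \to \mu(0) > 0$ uniformly on compact sets. The Jacobian $\varepsilon_n^{3}$ of the anisotropic scaling was absorbed exactly by the choice of the exponent $8/11$, since $8/11 + 3/11 = 1$. Hence
\[
	C \ge \int_{[-1/2,1/2]^2} U_n\,\mu_n \, dy \ge c\, q_n \to \infty,
\]
which is a contradiction. The main point to check is the uniformity of the $O(\varepsilon_n)$ error in \eqref{eq:deg_asympt} when applied to the $q_n$-scaled barrier. This is harmless, because the error is linear in $q_n$ and is multiplied by $\varepsilon_n \to 0$.
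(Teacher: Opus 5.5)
Your proof is correct. Its skeleton (contradiction, a lower barrier on a fixed square, then the mass normalization) is the same as the paper's, but the barrier is built differently.

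With $q_n=\beta_n/\varepsilon_n^{4}$, the paper takes $w=\frac{q_n}{8}(4r_0^2-y_2^2)+\psi_n$ on $(-2r_0,2r_0)^2$. Here $\psi_n$ solves $-(\varepsilon_n^2\partial_{11}+\partial_{22})\psi_n=0$, with lateral data that cancel the one-dimensional profile. An explicit separation-of-variables series then gives $|\psi_n|\le Cq_n e^{-\pi/(4\varepsilon_n)}$ on the inner square. This makes $w$ an exact solution of $-(\varepsilon_n^2\partial_{11}+\partial_{22})w=q_n/4$ that is nonpositive on the whole boundary, and it reflects the boundary-layer structure of the degenerate limit operator.

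The cost is that $\psi_n$ varies on the scale $\varepsilon_n$ in $y_1$ near the lateral sides. There its first and second $y_1$-derivatives are of order $q_n/\varepsilon_n$ and $q_n/\varepsilon_n^{2}$. The mismatch between $H_n$ and the model operator $\varepsilon_n^2\partial_{11}+\partial_{22}$ then produces terms of order $q_n$ times the deviation of the metric coefficients from their normalized values. Since \eqref{eq:deg_asympt} is a statement about a fixed $C^2$ function, it does not control these terms by itself.

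Your cutoff $\eta(y_1)$ avoids all of this. It only lowers $-\partial_{22}\varphi$, which is harmless for a subsolution. The $\partial_{11}$ and $\partial_{12}$ contributions carry the factors $\varepsilon_n^{2}$ and $\varepsilon_n$ from $H_n$. Because $\varphi/q_n$ is independent of $n$, \eqref{eq:deg_asympt} applies exactly as stated, with error $O(\varepsilon_n q_n)$.

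You lose nothing relevant by doing this, since only some lower bound $U_n\ge c\,q_n$ on a fixed square is needed. Your route is therefore both shorter and more robust than the paper's for this proposition.
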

\begin{proof}
	By contradiction, take a subsequence such that $\frac{\beta_{n}}{\varepsilon_n^{4}} \rightarrow \infty$.

	We claim there exist $c_0, ~r_0 > 0$ such that for any $n$ sufficiently large	
	\[
		U_{n} \geq c_0 \frac{\beta_{n}}{\varepsilon_n^{4}}  \faskip \text{in } (-r_0, r_0)^{2}.
	\]

	Indeed, fix $r_0 > 0$ such that 
	\[
		\frac{\beta_{n}}{\varepsilon_n^{4}} - a y_1^{4} - b y_2^{2} + R_{n}(x) \geq \frac{\beta_{n}}{2\varepsilon_n^{4}} \faskip \text{in } (-2r_0, 2r_0)^{2},
	\]
	and define
	\[w(y) = \frac{\beta_{n}}{8\varepsilon_n^{4}}(4 r_0^{2} - y_2^{2}) + \psi_{n}(y),\]
	where $\psi_{n}$ solves
	\[
		\begin{cases}
			-(\varepsilon_n^{2} \partial_{11} + \partial_{22}) \psi = 0 & \text{in }(-2r_0, 2r_0)^2, \\
			\psi(y_1, \pm 2r_0) = 0 & \forall y_1 \in (-2r_0, 2r_0), \\
			\psi(\pm 2 r_0, y_2) = \frac{\beta_{n}}{4\varepsilon_n^{4}}(y_2^{2} - 4 r_0^{2}) & \forall y_2 \in (-2r_0, 2r_0).  
		\end{cases}
	\]
	Note that the boundary datum is continuous and the boundary of the domain is Lipschitz regular, hence the Dirichlet problem has a classical solution.

	Using separation of variables, we obtain the representation formula:
\begin{align*}
	\psi_{n}(y) = \frac{16r_0^{2}\beta_{n}}{\pi^{3}\varepsilon_n^{4}}\sum_{k=1}^{\infty} \frac{(-1)^{k}}{(2k+1)^{3} } \frac{\cosh\left(\frac{(2k+1)\pi}{4r_0\varepsilon_n}y_1\right)}{\cosh\left(\frac{(2k+1)\pi}{2\varepsilon_n}\right)}	\cos \left( \frac{(2k+1)\pi}{4r_0} y_2 \right),
\end{align*}
from which we obtain 
\begin{equation}
	\label{eq:bound_psi}
	|\psi_{n}(y)| \leq C \frac{\beta_{n}}{\varepsilon_n^{4}}e^{-\pi/4\varepsilon_n} \faskip \forall (y_1, y_2) \in (-r_0, r_0)^{2}.
\end{equation}

By \eqref{eq:deg_asympt}, we have 
\[-\divg(H_n\nabla w) = \frac{\beta_n}{4\varepsilon_n^{4}} + o(1)  ~\text{ in } (-2r_0,2r_0)^{2},\]
so that, for $n$ large, $-\divg(H_n \nabla w) \leq \frac{\beta_n}{2 \varepsilon_n^{4}}$ in $ (-2r_0, 2r_0)^{2} $.

Now, $U_{n}$ is a nonnegative supersolution to $- \divg(H_n\nabla U_{n}) = \frac{\beta_{n}}{2\varepsilon_{n}^{4}}$ in $(-2r_0, 2r_0)$, so $U_{n} \geq w$ in $ (-2r_0, 2r_0) $ by the maximum principle. Thus, 
\[U_{n} \geq \frac{\beta_{n}}{\varepsilon_n^{4}}\left( \frac{3}{8} - Ce^{-\pi/4\varepsilon_n} \right) ~ \text{ in } (-r_0, r_0). \]
Choose $ n_0 $ such that the right hand side is larger than $\frac{\beta_{n}}{2\varepsilon_n^{4}}$, and the claim follows for $c_0 = \frac{1}{2}$.

Finally, we have
\[1 = \int_{\RR^{2}} U_{n} \geq \frac{(2r_0)^{2} c_0 \beta_{n}}{\varepsilon_n^{4}} \rightarrow +\infty, \]
a contradiction. The claim of the proposition follows.
\end{proof}

\begin{prop}
	\label{lem:deg_infty_bd}
	There exist $C_1 , R_1 > 0$ such that $ \supp(U_n) \subset B_{R_1}$ and $ \|U_n\|_{L^{\infty}} \leq C_1 \left( \frac{\beta_n}{\varepsilon_n^{4}} \right)^2 $.
\end{prop}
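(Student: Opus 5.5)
I follow the proof of Proposition \ref{prop:nondeg_infty_bd}: build a compactly supported supersolution from the explicit limit profile \eqref{eq:deg_sol}, compare, and read off support and sup bounds. Write $q_n := \beta_n/\varepsilon_n^{4}$, which is bounded by Proposition \ref{prop:deg_bound_q}. The barrier is
\[
\varphi_n(y) := V_{4q_n}\bigl(y_1,\, y_2\bigr)\cdot\tfrac14 ,
\]
where $V_{\alpha}$ is the rescaling \eqref{eq:deg_V} of $\Phi$. Then $-\partial_{22}\varphi_n = \bigl(q_n - \tfrac14(y_1^4+y_2^2)\bigr)\mathbb{1}_{\{\varphi_n>0\}}$, and more generally I use whatever combination of \eqref{eq:deg_V} and a constant multiple yields right-hand side $q_n - \tfrac14(y_1^4+y_2^2)$. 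Since $\Phi$ is supported in $\{y_2^2/3+y_1^4\le 1\}$ and $q_n\le C_0$, all $\varphi_n$ are supported in one fixed ball $B_{CR_0}$. Also $\|\varphi_n\|_{L^\infty}\le C q_n^2$, because $V_\alpha=\alpha^2\Phi(\cdot)$.

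The key step is to show that $\varphi_n$ is a supersolution for the operator $-\divg(H_n\nabla\cdot)$ with right-hand side $q_n-\tfrac12(y_1^4+y_2^2)+R_n$. Here lies the main obstacle. Unlike the isotropic case, $\varphi_n$ is only $C^{1,1}$ in $y_2$ across the free boundary, and $\partial_{11}\varphi_n$ involves $y_1^3$-type derivatives of $(y_2^2/3+y_1^4)^2$. These are bounded on compacts but appear multiplied by $\varepsilon_n^2$ in $H_n$, while the mixed terms come with a factor $\varepsilon_n$. So \eqref{eq:deg_asympt} is what I need, but with $\varphi_n\in C^{1,1}$ only. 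I plan to verify directly that $\varphi_n\in W^{2,\infty}$: the explicit formula is $\tfrac34(1-s)^2$ with $s=y_2^2/3+y_1^4$, which vanishes to second order at $s=1$, so all second derivatives are bounded uniformly in $n$. Then in the weak sense
\[
-\divg(H_n\nabla\varphi_n)\ge q_n-\tfrac14(y_1^4+y_2^2) - C\varepsilon_n \quad\text{in } B_{CR_0}.
\]
Inside $B_{CR_0}$, the gap $\tfrac14(y_1^4+y_2^2)$ between the $\tfrac14$ and $\tfrac12$ coefficients, together with $R_n\to0$ uniformly and the bound \eqref{eq:hypothesis_degenerate}, absorbs the $C\varepsilon_n$ error except near the origin. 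Near the origin I worry about $q_n$ small, but there $\varphi_n>0$, so I shift by using $V_{4q_n+\delta_n}$ with a small $\delta_n\to0$, which is allowed by \eqref{eq:cont_infty}. Outside the support, $0\ge q_n-\tfrac12(y_1^4+y_2^2)$ provided $\tfrac12(y_1^4+y_2^2)\ge q_n$ there. This holds because the zero set of $\varphi_n$ lies outside $\{\tfrac14(y_1^4+y_2^2)<q_n\}$, the ellipse-type inclusion that follows from $y_2^2/3+y_1^4\ge y_1^4+y_2^2\cdot\tfrac13$ after scaling.

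With the supersolution in hand, Lemma \ref{lem:mp} on $O_n$ (where $U_n=\varphi_n=0$ on $\partial O_n$) gives $U_n\le\varphi_n$. This yields $\supp U_n\subset B_{R_1}$ with $R_1:=CR_0$, and $\|U_n\|_{L^\infty}\le\|\varphi_n\|_{L^\infty}\le C_1 q_n^2$.

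If the weak supersolution inequality across the free boundary of $\varphi_n$ proves delicate, my fallback is a cruder barrier. For the support, I would use $A\,(c q_n - y_1^4/?)$-type separable functions. For the sup bound, I would compare on the fixed box containing $B_{R_1}$ with $w=q_n(\ell^2-y_2^2)$ corrected as in Proposition \ref{prop:deg_bound_q}. That comparison only gives a bound of order $q_n$, which suffices since $q_n\le C_0$, while the $q_n^2$ form of the bound comes from the scaling of $V$.
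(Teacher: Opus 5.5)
Your barrier is exactly the paper's: $\tfrac14 V_{4q_n}=4q_n^2\Phi\bigl((4q_n)^{-1/4}y_1,(4q_n)^{-1/2}y_2\bigr)$, compared with $U_n$ through Lemma \ref{lem:mp}, with the support and the bound $3q_n^2$ read off from \eqref{eq:deg_sol}. The paper simply writes $-\divg(H_n\nabla\varphi)\ge q_n-\tfrac14(y_1^4+y_2^2)+o(1)\ge q_n-\tfrac12(y_1^4+y_2^2)$, so you are right that the last inequality needs an argument where $y_1^4+y_2^2$ is small.

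\textbf{The gap is in your repair.} You bound the error by $C\varepsilon_n$, independently of $q_n$. So the additive shift must satisfy $\delta_n\gtrsim\varepsilon_n$, and the barrier $\tfrac14V_{4q_n+\delta_n}$ then only yields $\|U_n\|_{L^\infty}\le\tfrac{3}{16}(4q_n+\delta_n)^2$. That is $\le C_1q_n^2$ only if $q_n\gtrsim\varepsilon_n$. No lower bound on $q_n$ is available at this point: it is Corollary \ref{cor:deg_bounds}, which is deduced from this very proposition. For the same reason you cannot invoke \eqref{eq:cont_infty}, which requires $\alpha\ge c_0^{-1}$. Your fallback claim that an $O(q_n)$ bound suffices ``since $q_n\le C_0$'' also runs the wrong way: $q_n\le C_0$ gives $q_n^2\le C_0q_n$, not the converse. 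So, as written, the $q_n^2$ form does not ``come from the scaling of $V$''.

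There are two easy ways to close this.

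(i) \emph{Make the shift relative.} The error actually scales with $q_n$. Up to numerical constants, the derivatives $\partial_1,\partial_2,\partial_{11},\partial_{12},\partial_{22}$ of $\varphi_n$ are $q_n^{7/4},q_n^{3/2},q_n^{3/2},q_n^{5/4},q_n$ times the corresponding (bounded) derivatives of $\Phi$ at the rescaled point. Also $|q_n-\tfrac14(y_1^4+y_2^2)|\le 3q_n$ on $\supp\varphi_n$. On bounded sets you have $q_n\le C_0$, $|h^n_{11}|\le C\varepsilon_n^2$, $|h^n_{12}|+|h^n_{22}-1|\le C\varepsilon_n$, and the first-order coefficients $\partial_1h^n_{11}+\partial_2h^n_{12}$ and $\partial_1h^n_{12}+\partial_2h^n_{22}$ are $O(\varepsilon_n^2)$. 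Together these give $-\divg(H_n\nabla\varphi_n)\ge q_n-\tfrac14(y_1^4+y_2^2)-C\varepsilon_nq_n$ a.e.\ on $\{\varphi_n>0\}$. Now take $\tilde\varphi_n:=\tfrac14V_{4(1+K\varepsilon_n)q_n}$ with $K$ large. Then:
\begin{itemize}
\item on its positivity set, origin included, $-\divg(H_n\nabla\tilde\varphi_n)\ge q_n-\tfrac14(y_1^4+y_2^2)\ge q_n-\tfrac12(y_1^4+y_2^2)$;
\item on its zero set, $0\ge q_n-\tfrac12(y_1^4+y_2^2)$, because there $y_1^4+y_2^2\ge y_1^4+y_2^2/3\ge 4q_n$ (this is the inclusion you meant to write);
\item $\|\tilde\varphi_n\|_{L^\infty}=3(1+K\varepsilon_n)^2q_n^2\le 12q_n^2$ for $n$ large.
\end{itemize}

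(ii) \emph{Keep your additive shift and bootstrap.} The facts $\supp U_n\subset B_{R_1}$, $\|U_n\|_{L^\infty}\le C(q_n+\delta_n)^2$ and $\int U_n\mu_n=1$ already force $\liminf_n q_n>0$. After that, $\delta_n\le q_n$ for $n$ large, so $(4q_n+\delta_n)^2\le 25q_n^2$.
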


\begin{proof}
	Let $ \varphi $ be defined by
	\[
		\varphi(y) = 4 \left( \frac{\beta_n}{\varepsilon_n^{4}} \right)^2 \Phi \left( \left( 4 \frac{\beta_n}{\varepsilon_n^{4}}\right)^{-1/4} y_1,  \left( 4 \frac{\beta_n}{\varepsilon_n^{4}}\right)^{-1/2} y_2 \right) 
	\]
	which has support in $ F_n := \left\{ y_1^{4} + \frac{y_2^{2}}{3} < \frac{4 \beta_n}{\varepsilon_n^{4}} \right\} $. A straightforward computation shows that $ \varphi $ satisfies 
	\begin{align*}
		-\partial_{22}\varphi &= \left( \frac{\beta_n}{\varepsilon_n^{4}} - \frac{1}{4}(y_1^{4} + y_2^{2}) \right) \mathbb{1}_{\{\varphi > 0 \}}.
	\end{align*}
	Therefore, 
	\begin{align*}
		-\divg(H_n\nabla \varphi) &\geq \frac{\beta_n}{\varepsilon_n^{4}} - \frac{1}{4}(y_1^{4} + y_2^2) + o(1) \\
					  &\geq \frac{\beta_n}{\varepsilon_n^{4}} - \frac{1}{2} (y_1^{4} + y_2^2)
	\end{align*}
	in $ (-\varepsilon_n^{-1}R, \varepsilon_n^{-1}R) \times (-\varepsilon_n^{-2}R, \varepsilon_n^{-2}R) $, which contains $ O_{n} $. 

	Since 
	\begin{align*}
		U_n(y_1, \pm \varepsilon_n^{-2}R) = 0 = \varphi & \faskip \forall y_1 \in (-\varepsilon_n^{-1}R, \varepsilon_n^{-1}R),\\ 
		U_n(\pm \varepsilon_n^{-1}R, y_2) = 0 = \varphi & \faskip \forall y_2 \in(-\varepsilon_n^{-2}R, \varepsilon_n^{-2}R),
	\end{align*}
	we can apply Lemma \ref{lem:mp} to get $ \varphi \geq U_n $.  

	By Proposition \ref{prop:deg_bound_q}, there is $ {R_1} $ such that $ F_n \subset B_{R_1} $ for all $ n $. In particular, for $ y \in \RR^{2} \setminus B_{R_1} $, $U_n \equiv 0$, where $ C_0 $ is that of the preceding Proposition. Moreover, $ U_n \leq \|\varphi\|_{L^{\infty}} = 3 \left( \frac{\beta_n}{\varepsilon_n^{4}} \right) ^{2}$, and the Lemma follows.
\end{proof}

\begin{cor}
	\label{cor:deg_bounds}
	$ \liminf_{n\rightarrow \infty} \frac{\beta_{n}}{\varepsilon_n^{4}} > 0 $
\end{cor}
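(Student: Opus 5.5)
This follows the pattern of Corollary \ref{cor:nondeg_inf_bound}: we combine the mass normalization of $U_n$ with the two bounds of Proposition \ref{lem:deg_infty_bd}, namely the uniform support bound and the $L^\infty$ bound.

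First I will record the mass identity in the anisotropic scaling. The change of variables $x_1 = \gmax^{1/4}\varepsilon_n y_1$, $x_2 = \gmax^{1/2}\varepsilon_n^2 y_2$ has Jacobian $\gmax^{3/4}\varepsilon_n^{3}$. The prefactor is $m_n^{8/11} = \varepsilon_n^{8}$. Hence
\[ m_n = \int_{B_R} u_n \mu \,\mathrm{d}x = c_{\gmax}\,\varepsilon_n^{11}\int_{\RR^2} U_n \mu_n\,\mathrm{d}y, \]
where $c_{\gmax}>0$ depends only on $\gmax$ and $\mu_n(y) = \mu(\gmax^{1/4}\varepsilon_n y_1, \gmax^{1/2}\varepsilon_n^2 y_2)$. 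It follows that $\int_{\RR^2} U_n\mu_n \,\mathrm{d}y = c$ for a fixed constant $c>0$ independent of $n$. This is the analogue of $\|U_n\|_{L^1(\mu_n)} = \gmax^{-1}$ in Section \ref{sect:hom}; after the normalization $\gmax = 1$ it is the identity $\int U_n = 1$ already used in Proposition \ref{prop:deg_bound_q}. By \eqref{eq:metric}, $\mu_n \to 1$ uniformly on compact sets, so $\mu_n \le 2$ on $B_{R_1}$ for $n$ large.

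Next I will insert the bounds of Proposition \ref{lem:deg_infty_bd}: $\supp U_n \subset B_{R_1}$ and $\|U_n\|_{L^\infty} \le C_1(\beta_n/\varepsilon_n^4)^2$. These give
\[ c = \int_{B_{R_1}} U_n \mu_n\,\mathrm{d}y \le 2|B_{R_1}|\, C_1 \left(\frac{\beta_n}{\varepsilon_n^4}\right)^2 . \]
Therefore $\beta_n/\varepsilon_n^4 \ge \left(c/(2C_1|B_{R_1}|)\right)^{1/2} > 0$ for all large $n$, which proves the claim.

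The only point requiring care is that $R_1$ and $C_1$ are independent of $n$. This holds because Proposition \ref{lem:deg_infty_bd} derives them from the upper bound of Proposition \ref{prop:deg_bound_q}, which keeps the comparison sets $F_n$ inside a fixed ball. Without that uniform support bound, mass could escape to infinity in the degenerate $y_1$ direction, and the $L^\infty$ bound alone would not suffice. Since that step is already done, no real obstacle remains.
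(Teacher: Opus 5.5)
Your proof is correct and follows the paper's argument exactly. You combine the mass normalization of $U_n$ with the $n$-independent support bound $\supp(U_n)\subset B_{R_1}$ and the bound $\|U_n\|_{L^\infty}\le C_1(\beta_n/\varepsilon_n^4)^2$ from Proposition \ref{lem:deg_infty_bd}; you are in fact slightly more careful than the paper, which drops $C_1$ in the final inequality, and you only need $\mu$ bounded above on $B_R$, so it does not matter whether the chart of Section \ref{sect:inhom} satisfies \eqref{eq:metric}.
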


\begin{proof}
	By Proposition \ref{lem:deg_infty_bd} 
	\[1 = \int_{B_{R_1}}U_n \mu_n(y) \mathrm{d}y \leq \mu_n(B_{R_1})\left( \dfrac{\beta_{\varepsilon_n}}{\varepsilon_n^{4}} \right)^{2},\]
which implies the claim by recalling that $\mu_n(B_{R_1}) \rightarrow |B_{R_1}| > 0$.
\end{proof}

The next Proposition will give simultaneously the a.e. convergence of $ U_n $ and the convergence of the interfaces. Its proof is identical to that of Proposition \ref{prop:hom_sandwich}.
\begin{prop}
	\label{prop:sandwich}
	Assume $ \frac{\beta_n}{\varepsilon_n^{4}} \rightarrow \overline{\alpha} $. For every $\delta \in (0, \overline{\alpha})$, there exists $ n_0 \in \NN $ such that 
	\begin{equation}
	\label{eq:deg_sandwich}
		V_{\overline{\alpha} - \delta} \leq U_n \leq V_{\overline{\alpha} + \delta} \text{ in } \RR^{2}.
	\end{equation}
	for every $n\geq n_0$.
\end{prop}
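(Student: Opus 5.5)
The proof follows the scheme of Proposition \ref{prop:hom_sandwich}: we build barriers out of the explicit profiles $V_{\overline{\alpha}\pm\delta}$ from \eqref{eq:deg_V}--\eqref{eq:deg_sol} and compare them with $U_n$ using Lemma \ref{lem:mp}. As in the previous section, we normalize $a=b=1$ and $h_{22}(0)=1$.

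First fix $\delta\in(0,\overline{\alpha})$. By Propositions \ref{prop:deg_bound_q} and \ref{lem:deg_infty_bd}, together with the explicit support $\{y_2^2/3+y_1^4<\overline{\alpha}+\delta\}$ of $V_{\overline{\alpha}+\delta}$, there is a box $Q=(-r_2,r_2)^2$ containing $\supp U_n$ for all $n$ and $\supp V_{\overline{\alpha}\pm\delta}$. Since $\beta_n/\varepsilon_n^4\to\overline{\alpha}$ and $R_n\to0$ uniformly on $Q$, there is $n_1$ such that for $n\ge n_1$
\[
\overline{\alpha}-\tfrac{\delta}{2}-y_1^4-y_2^2\le \frac{\beta_n}{\varepsilon_n^4}-y_1^4-y_2^2+R_n(y)\le \overline{\alpha}+\tfrac{\delta}{2}-y_1^4-y_2^2 \quad\text{in } Q .
\]

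For the upper bound, we check that $V_{\overline{\alpha}+\delta}\in C^{1,1}_c$. In $\{\Phi>0\}$, $\Phi$ is a polynomial in $(y_1^4, y_2^2)$ that vanishes to second order on the level set, so the second derivatives (including $\partial_{11}$ and $\partial_{12}$) are bounded. By \eqref{eq:deg_asympt},
\[
-\divg(H_n\nabla V_{\overline{\alpha}+\delta}) = -\partial_{22}V_{\overline{\alpha}+\delta}+O(\varepsilon_n) \ge \overline{\alpha}+\delta-y_1^4-y_2^2+O(\varepsilon_n).
\]
Here we use that $-\partial_{22}V$ equals the right-hand side on the positivity set and is $0\ge$ a negative quantity outside it, since $\overline{\alpha}+\delta-y_1^4-y_2^2<0$ there because $y_2^2\ge y_2^2/3$. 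For $n$ large this is $\ge \overline{\alpha}+\delta/2-y_1^4-y_2^2$. Both functions vanish on $\partial Q$, so Lemma \ref{lem:mp} gives $U_n\le V_{\overline{\alpha}+\delta}$.

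For the lower bound, we work on $\{V_{\overline{\alpha}-\delta}>0\}$, where $-\divg(H_n\nabla V_{\overline{\alpha}-\delta})\le \overline{\alpha}-\delta-y_1^4-y_2^2+O(\varepsilon_n)\le \overline{\alpha}-\delta/2-y_1^4-y_2^2$. We then compare with $U_n$ on this set, where $V=0\le U_n$ on its boundary. The key point is that $U_n>0$ throughout the set, so that $-\divg(H_n\nabla U_n)$ equals the larger right-hand side there. To guarantee this, we compare on the set $D=\{V_{\overline{\alpha}-\delta}>0\}\cap\{U_n>0\}$ directly: on $D$ the operator inequality holds, and on $\partial D$ we have either $V=0$ or $U_n=0$. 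In the second case we need $V\le 0$, which requires a separate argument.

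This positivity step is the main obstacle, and it is exactly where Proposition \ref{prop:hom_sandwich} invokes the comparison principle. Here I would instead use Lemma \ref{lem:mp} with the roles swapped: $w=V_{\overline{\alpha}-\delta}$ solves the obstacle problem with right-hand side $f=\overline{\alpha}-\delta-y_1^4-y_2^2$ plus the $O(\varepsilon_n)$ correction, and $U_n$ is a supersolution, $-\divg(H_n\nabla U_n)\ge \min(f_n,0)\ge f-\delta/2$ in $Q$, because $\mathbb{1}_{\{U_n>0\}}f_n\ge\min(f_n,0)$ and, where $f_n<0$, one checks $f\le f_n$. To absorb the $O(\varepsilon_n)$ term, we apply the lemma to $V_{\overline{\alpha}-\delta}$ regarded as the exact solution of the obstacle problem with operator $H_n$ and right-hand side $\overline{\alpha}-\delta-y_1^4-y_2^2+e_n$, with $\|e_n\|_\infty\to0$. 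Lemma \ref{lem:mp} then yields $U_n\ge V_{\overline{\alpha}-\delta}$ in $Q$, and hence in $\RR^2$.
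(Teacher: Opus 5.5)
Your upper bound is the paper's argument and is fine. The lower bound has a genuine gap, exactly at the step you identify as the main obstacle. Write $f_n=\frac{\beta_n}{\varepsilon_n^4}-y_1^4-y_2^2+R_n$ and $f=\overline{\alpha}-\delta-y_1^4-y_2^2$. You claim $-\divg(H_n\nabla U_n)\ge\min(f_n,0)\ge f-\delta/2$ in $Q$, but you only check the second inequality on $\{f_n<0\}$. On $\{f_n\ge 0\}$ it reads $0\ge f-\delta/2$, which is false wherever $f>\delta/2$. For example, at $y=0$ and $n$ large, $\min(f_n(0),0)=0$ because $f_n(0)\to\overline{\alpha}>0$, while $f(0)-\delta/2=\overline{\alpha}-3\delta/2>0$ as soon as $\delta<2\overline{\alpha}/3$. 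This is precisely the region that matters: points of $\{V_{\overline{\alpha}-\delta}>0\}$ where the source is positive. No argument using only the indicator form $-\divg(H_n\nabla U_n)=f_n\mathbb{1}_{\{U_n>0\}}$ can repair it. That identity does not prevent $U_n$ from vanishing on an open set where $f_n>0$ (it is satisfied by $U\equiv 0$), and there $U_n\ge V_{\overline{\alpha}-\delta}$ would fail. So your roles-swapped application of Lemma~\ref{lem:mp} rests on a supersolution inequality you have not established.

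The missing ingredient is that $U_n$ solves an obstacle problem, not merely the equation with the indicator. Since $\xi_M\le 1$ in \eqref{eq:obstacle_gamma}, one has $-\Delta_\Gamma u_M\ge (1+\alpha_M)g-1$ on all of $\Gamma$, as in the reformulation in the introduction. After rescaling this gives $-\divg(H_n\nabla U_n)\ge f_n$ in all of $Q$, i.e.\ $f_n\le 0$ a.e.\ on $\{U_n=0\}$. The paper's proof uses this tacitly. In the proof of Proposition~\ref{prop:hom_sandwich}, to which this statement is reduced, it asserts $\overline{\alpha}-\tfrac{\delta}{2}-y_1^4-y_2^2\le-\divg(H_n\nabla U_n)$ on $\{V_{\overline{\alpha}-\delta}>0\}$, including where $U_n$ might vanish. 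With this property, $-\divg(H_n\nabla U_n)\ge f_n\ge f+\delta/2\ge f+e_n$ for large $n$. Then either the paper's direct comparison on $\{V_{\overline{\alpha}-\delta}>0\}$, where $V_{\overline{\alpha}-\delta}=0\le U_n$ on the boundary, or your swapped use of Lemma~\ref{lem:mp} on $Q$ closes the argument. Note the asymmetry: the upper bound needs only the indicator form, because $U_n>0$ on $\{U_n>V_{\overline{\alpha}+\delta}\}$. The lower bound genuinely requires the supersolution property.
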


\begin{proof}[Proof of Theorem \ref{mainthm:inhom}]
	By Proposition \ref{prop:deg_bound_q} and Corollary \ref{cor:deg_bounds}, we may take a subsequence such that $\frac{\beta_n}{\varepsilon_n^{\gamma}} \rightarrow \overline{\alpha} > 0$.

	For this subsequence, Proposition \ref{prop:sandwich} and \eqref{eq:cont_infty} implies that $ U_n \rightarrow V_{\overline{\alpha}} $ almost everywhere in $ \RR^{2} $. Since $ \supp(U_n) \subset B_{R_1} $ and $ U_n $ is bounded in $ L^{\infty}(\RR^{2}) $, $U_n \rightarrow V_{\overline{\alpha}}$ strongly in $ L^{p}(B_{R_1}) $ for every $ p\in [1,\infty) $.
	
	On the other hand, recall that $ U_n $ minimizes
	\[\min \left\{ J_n(v) = \int_{B_{R_1}} \frac{|H_n \nabla v|^{2}}{2}  - \left( \frac{\beta_n}{\varepsilon_n^{4}} - y_1^{4} - y_2^{2} \right) v :~ v\in H^{1}_0(B_{R_1}),~ v\geq 0  \right\}. \]
	Fix $ \varphi \in C^{2}_c(B_{R_1 / 2}) $ with $ \varphi \geq 0 $, then
	\[J_n(U_n) \leq J_n(\varphi) \leq C.\]
	Hence,
	\begin{align*}
		\int_{B_{R_1}} (\partial_2 U_{n})^2 \mathrm{d}y &\leq CJ_n(U_n) + \int_{B_{R_1}} \left( \frac{\beta_n}{\varepsilon_n^{4}} - y_1^{4} - y_2^{2} + R_n(y) \right) U_n \mathrm{d}y  \\ 
								&\leq C + \|U_n\|_{L^{\infty}(B_{R_1})} \int_{B_{R_1}}\left| \frac{\beta_n}{\varepsilon_n^{4}} - y_1^{4} - y_2^{2} + R_n(y) \right| \mathrm{d}y \\
								&\leq C.
	\end{align*}
	Thus, $ (\partial_2 U_n) $ is a bounded sequence in $ L^{2}(B_{R_1}) $, whence there exists $ \overline{V} \in L^{2}(B_{r_1}) $ and a subsequence such that $\partial_2 U_n \rightharpoonup \overline{V}$ weakly in $ L^{2}(B_{r_1}) $. Using standard arguments we see that in fact $ \overline{V} = \partial_2 V_{\overline{\alpha}} $.

	The convergence along the whole parameter follows by an analogous argument as for Theorem \ref{mainthm:isolated_hom}, since solutions of the limit problem are also unique.
\end{proof}

\section{An isolated maximum described to the leading order by a function with nontrivial zero set}
\label{sect:noncoercive}

In this section we will prove Theorem \ref{mainthm:noncoercive}. We have
\[
	g(x) = \gmax - x_1^{4} - x_1^{2} x_2^{2} - x_2^{6} + r(x),
\]
in Riemannian normal coordinates, i.e. coordinates such that \eqref{eq:metric} holds. Again, we may choose a smaller neighborhood in order to have
\[
	g(x) \leq \gmax - \frac{1}{2} \left( x_1^{4} + x_1^{2} x_2^{2} + x_2^{6} \right). 
\]
Without loss of generality, we may also assume $ \inf_{B_R} h_{11} > 0 $.

In this case, there are two plausible scalings, namely
\begin{subequations}
	\label{eq:scalings}
	\begin{align}
		u_M(x_1, x_2) &= m^{3/4} U_m(m^{-1/8}x_1, m^{-1/8} x_2), \label{eq:sc_hom} \\
		u_M(x_1, x_2) &= m^{10/13} V_m(m^{-2/13} x_1,m^{-1/13} x_2), \label{eq:sc_inhom} 
	\end{align}
\end{subequations}
which are chosen in order to have $ \int_{\RR^{2}} U_m  = \int_{\RR^{2}} V_m = 1 $. By the same computations as in Sections \ref{sect:hom} and  \ref{sect:inhom}, we see that the leading term in the right hand side of the equation for $ U_m $ is $ \frac{\beta_M}{m^{1/2}} $ and that of $ V_m $ is $ \frac{\beta_M}{m^{6/13}} $. Thus, since $ m < 1$, we have
\[\dfrac{\frac{\beta_M}{m^{6/13}}}{\frac{\beta_M}{m^{1/2}}} = m^{1/26} \rightarrow 0.\]
Hence, it will suffice to show that $ \frac{\beta_M}{m^{1/12}} $ is bounded to conclude that $ \frac{\beta_M}{m^{6/13}} \rightarrow 0 $, and that therefore $ V_m $ will converge to zero weakly in $ L^{p} $. 

Take $ M_n\rightarrow 0 $, and denote $ \varepsilon_n = m_n^{1/8} $, thus, with the same notation as in Section \ref{sect:hom}, satisfies
\begin{align*}
	\int_{B_{R_n}} U_n \mu_n(y) \mathrm{d}y &= 1, \\ 
	-\divg(H_n(y) \nabla U_n) &= \left( \frac{\beta_n}{\varepsilon_n^{4}} - y_1^{4} - y_1^{2}y_2^{2} - \varepsilon_n^{2} y_2^{6} +o(1)  \right) \mathbb{1}_{\{U_n >0 \}}.
\end{align*}

This suggests that the limit equation should be
\begin{equation}
\label{eq:obst_noncoercive}
\begin{cases}
	-\Delta v = \left( \overline{\alpha} - x_1^{4} - x_1^{2} x_2^{2} \right) \mathbb{1}_{\{v>0\}} & \text{ in }\RR^{2},\\
	v\geq 0, v\in L^{1}(\RR^{2}). &
\end{cases}
\end{equation}
The right-hand side lacks the strict positivity to replicate the argument in Proposition \ref{prop:existence}. In Figure \ref{fig:noncoercive} we give an illustration of the zero set of the right-hand side of the equation.

\begin{figure}[ht]
	\centering
	\includegraphics[width=0.6\textwidth]{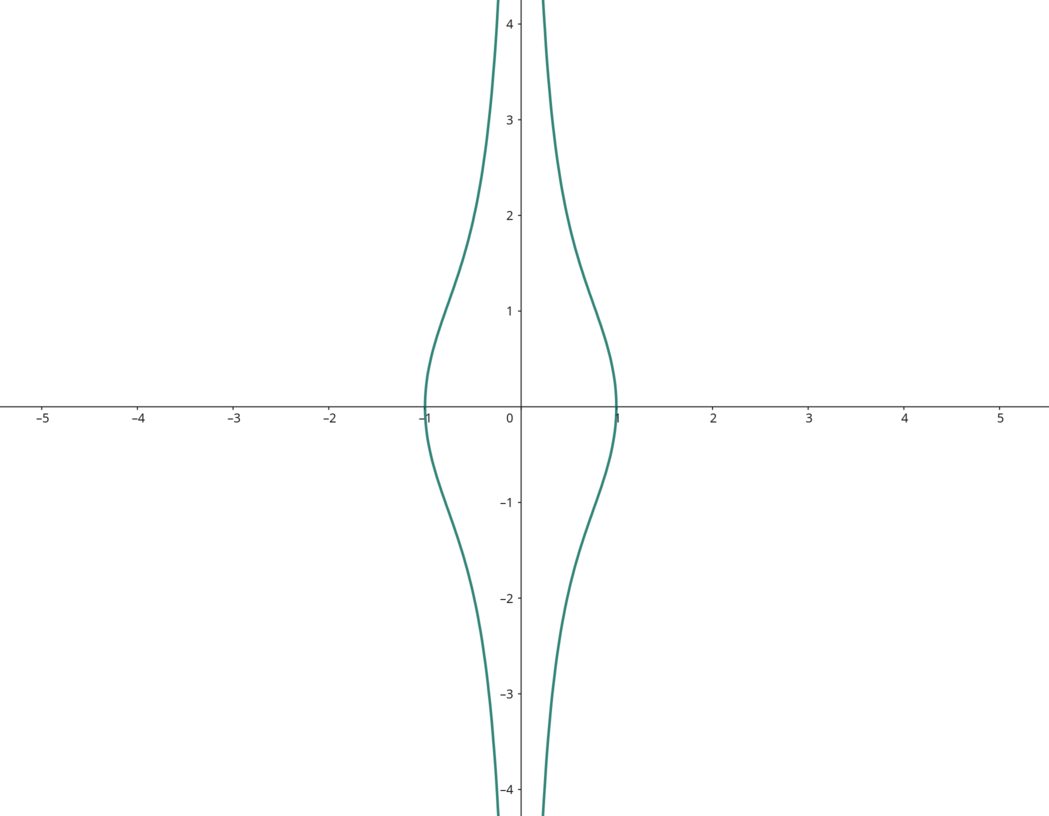}
	\caption{A plot of the set $ \{x_1^4 + x_1^2x_2^2 = 1\} $}
	\label{fig:noncoercive}
\end{figure}

Via an approximating argument, we can prove existence of a solution to the limit problem:
\begin{prop}
	\label{prop:exist_noncoercive}
	There exists a unique solution $ v \in H^{1}(\RR^{2}) $ to \eqref{eq:obst_noncoercive}.
\end{prop}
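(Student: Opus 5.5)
Uniqueness is settled by the argument of Proposition \ref{prop:existence}. Testing the variational inequalities of two solutions $v_1,v_2$ against each other gives $\int|\nabla(v_1-v_2)|^2=0$. Hence $v_1-v_2$ is constant, and since both functions lie in $L^1(\RR^2)$ the constant is zero. This step never uses coercivity of the right-hand side. The real work is existence, and for that I will use an approximation by coercive problems.

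\textbf{Approximation.} For $\eta\in(0,1]$ set $f_\eta(y)=y_1^4+y_1^2y_2^2+\eta y_2^6$. This function is positive away from $0$, though not homogeneous. The minimization problem \eqref{eq:min} with $f$ replaced by $f_\eta$ still has a minimizer $v_\eta$, by the proof of Proposition \ref{prop:existence}: tightness only needs $f_\eta\to\infty$ at infinity, which holds. Its Euler--Lagrange equation is \eqref{eq:obst_noncoercive} with $f_\eta$ and a multiplier $\alpha_\eta=\int|\nabla v_\eta|^2+f_\eta v_\eta$ (up to the paper's sign convention). The plan is to pass to the limit $\eta\to0$, and I will fix the value $\overline{\alpha}$ at the end by the scaling \eqref{eq:v_rescale}, which is valid for the limit problem because $f_0$ is $4$-homogeneous.

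\textbf{Uniform estimates.} Take a fixed test function $w$ in \eqref{eq:min}. Then $J_\eta(v_\eta)\le J_1(w)\le C$, which gives $\|\nabla v_\eta\|_{L^2}\le C$ and $\int f_0 v_\eta\le C$. Nash's inequality bounds $v_\eta$ in $H^1$, so $\alpha_\eta$ is bounded. The lower bound $\alpha_\eta\ge c>0$ follows as in Corollary \ref{cor:nondeg_inf_bound} once an $L^\infty$ bound is available. The key point, and what I expect to be the \emph{main obstacle}, is a support bound uniform in $\eta$. Tightness from $\int f_0v_\eta\le C$ fails along the $y_2$-axis, where $f_0$ vanishes.

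To handle the axis I will use a barrier. In the strip $|y_1|\le\rho$ with $\rho$ small, the right-hand side $\alpha_\eta-f_\eta$ is at most $C$. Outside the region $\{y_1^4+y_1^2y_2^2<2\overline{C}\}$ the right-hand side is $\le -\kappa_0$. This region is a neighbourhood of the $y_2$-axis whose width in $y_1$ shrinks like $|y_2|^{-1}$ as $|y_2|\to\infty$. I want to show $v_\eta\equiv0$ for $|y_2|\ge L$. My first attempt is a one-dimensional comparison in $y_1$. On the horizontal slice at height $y_2$, the positivity set of the source has width $\sim|y_2|^{-1}$. A supersolution such as $C|y_2|^{-2}(\text{profile in } y_1|y_2|)$ then shows that $v_\eta\lesssim |y_2|^{-2}$ on these slices. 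Combining this smallness with Lemma \ref{lem:nondeg}, applied on balls of radius $\sim|y_2|^{-1}$ after rescaling, should force $v_\eta=0$ there. If that fails, the fallback is to use $\int v_\eta=1$ and subharmonicity-type mean value bounds to show $\sup v_\eta$ on thin strips is small, and again conclude with nondegeneracy. Away from the axis, Proposition \ref{prop:compact}'s argument applies directly, uniformly in $\eta$.

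\textbf{Passage to the limit.} With supports in a fixed ball $B_{R_1}$ and $\alpha_\eta\in[c,C]$, interior $W^{2,p}$ estimates as in \eqref{eq:reg_estimate} give compactness in $C^{1,\delta}$. Along a subsequence $\alpha_\eta\to\alpha_*>0$ and $v_\eta\to v$ with $\int v=1$. I then pass to the limit in the variational inequality exactly as in the proof of Theorem \ref{mainthm:isolated_hom}. This yields a compactly supported $H^1$ solution with constant $\alpha_*$. Finally, the rescaling $\overline{\alpha}^{3/2}\,\cdot\,v(y/\overline{\alpha}^{1/4})$, adjusted to the degree $4$ of homogeneity, gives a solution for every $\overline{\alpha}>0$.
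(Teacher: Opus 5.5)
Your uniqueness step is fine: using $v_1-v_2\in L^1$ instead of compact support is exactly what is needed here. The bound $\alpha_\eta\le 2J_\eta(v_\eta)\le C$ is also a clean way to bound the multiplier.

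The gap is the step you flag as the main obstacle. A support bound $v_\eta\equiv 0$ on $\{|y_2|\ge L\}$, uniform in $\eta$, is false, and so is your conclusion that the limit is compactly supported. Note that Theorem \ref{mainthm:noncoercive}, unlike Theorems \ref{mainthm:isolated_hom} and \ref{mainthm:inhom}, does not assert compact support.

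Taking $\varphi=v+\psi$ with $\psi\ge 0$ in the variational inequality shows that every solution satisfies $-\Delta v\ge \alpha-f$ in $\mathcal{D}'(\RR^2)$. If $v_\eta$ vanished on the open set $\{|y_2|>L\}$, this would force $f_\eta\ge\alpha_\eta$ there. But $f_\eta(0,y_2)=\eta y_2^6<\alpha_\eta$ for $|y_2|<(\alpha_\eta/\eta)^{1/6}$. So the support of $v_\eta$ reaches height at least $(\alpha_\eta/\eta)^{1/6}$, which blows up precisely when $\alpha_\eta\ge c>0$ — the lower bound you also need. The two requirements are incompatible.

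More precisely, $v$ is nonnegative and superharmonic on the connected open set $\{f<\alpha\}$, and it cannot vanish on any open subset of it. The strong minimum principle then gives $\{f<\alpha\}\subset\{v>0\}$. For \eqref{eq:obst_noncoercive} this set $\{y_1^4+y_1^2y_2^2<\overline{\alpha}\}$ is an unbounded neighbourhood of the whole $y_2$-axis.

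Your own slice computation shows the same thing. The problem $-w''=(\alpha-\rho^2y_1^2)\mathbb{1}_{\{w>0\}}$ has the nontrivial solution with $w(0)=\frac{3\alpha^2}{4\rho^2}$, supported in $|y_1|\le\sqrt{3\alpha}/\rho$. So one expects $v(0,y_2)\approx\frac{3\alpha^2}{4y_2^2}>0$, small but positive.

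Lemma \ref{lem:nondeg} cannot remove this tail. Every ball meeting the axis sees a source of size about $\alpha>0$, and rescaling balls of radius $\sim|y_2|^{-1}$ with heights $\sim|y_2|^{-2}$ keeps the source of order $\alpha$. Your subharmonicity fallback fails for the same reason, since $v_\eta$ is superharmonic near the axis. Hence everything you build on a fixed ball $B_{R_1}$ is unsupported: the lower bound on $\alpha_\eta$ ``as in Corollary \ref{cor:nondeg_inf_bound}'', the compactness on $B_{R_1}$, and $\int v=1$ in the limit.

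What you need is control compatible with a thin, unbounded support. The paper compares $v_\delta$ with the one-variable barrier $\varphi(x_1)$ solving $-\varphi''=(\alpha_\delta-x_1^4)\mathbb{1}_{\{\varphi>0\}}$. This gives, uniformly, $v_\delta\le\frac{\sqrt5}{3}\alpha_\delta^{3/2}$ and support in the vertical strip $|x_1|\le(5C_0)^{1/4}$. It then takes a weak $W^{2,p}_{\loc}$ limit:
\begin{enumerate}
\item the variational inequality passes to the limit on compact sets;
\item $\overline{v}\in L^1$ by Fatou;
\item nontriviality is automatic, because $v\equiv 0$ violates $-\Delta v\ge\alpha-f$ near the origin.
\end{enumerate}
If you also want to keep $\int v=1$ and to extract $\liminf\alpha_\eta>0$ from the mass constraint, your slice barrier is the right tool, but for tightness rather than vanishing. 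The bound $v_\eta\lesssim|y_2|^{-2}$ on a set of width $\lesssim|y_2|^{-1}$ at height $|y_2|$ gives an integrable majorant, just like $\Psi$ in the proof of Theorem \ref{mainthm:noncoercive}. Dominated convergence, with the constants tracked in $\alpha_\eta$, then replaces your fixed-ball argument.
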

\begin{proof}
	The uniqueness part follows by the same argument as in the proof of Proposition \ref{prop:existence}.

	For $ \delta \in (0,1) $, define the problem
	\begin{equation}
	\label{eq:min_delta}
		\min \left\{ J_{\delta}(v) := \int_{\RR^{2}} \frac{|\nabla v|^{2}}{2} + \left( x_1^{4} + x_1^{2}x_2^{2} + \delta x_2^{4} \right)  v \mathrm{d}y: v\in H^{1}(\RR^{2}),~ v\geq 0, ~ \int_{\RR^{2}}v = 1  \right\}.
	\end{equation}

	We conclude that $ v_{\delta} $ is the unique minimizer of \eqref{eq:min_delta}. Hence, it satisfies
	\begin{equation}
		\label{eq:obstacle_delta}
		\begin{cases}
			-\Delta v_{\delta} = \left( \alpha_{\delta} - x_1^{4} - x_1^{2}x_2^{2} - \delta x_2^{4}	\right)\mathbb{1}_{\{v_{\delta} > 0 \}} & \text{ in }\RR^{2},\\
			v_{\delta} \geq 0, \int_{\RR^{2}}v_{\delta} = 1. &
		\end{cases}
	\end{equation}
	
	Suppose $ \limsup_{\delta \rightarrow 0}\alpha_{\delta} = +\infty $. Replicating the argument in the proof of Proposition \ref{prop:nondeg_q_bound}, we get 
	\begin{equation}
		\label{eq:inf_noncoercive}
		v_{\delta} \geq C\alpha_{\delta} ~\text{ in }B_r,
	\end{equation}
for some $ r > 0 $ independent of $ \delta $. This implies $ \|v_{\delta} \|_{L^{1}(\RR^{2})} > 1$ for some $ \delta $. This is a contradiction, therefore there is some $ C_0 $ such that $ \alpha_{\delta} \leq C_0 $ for all $ \delta \in (0,1) $. 

	Consider the function
	\[\varphi(x) = \begin{cases}
		\alpha_{\delta}^{3/2} \left( \dfrac{\sqrt{5}}{3} - \dfrac{1}{2} \left( \dfrac{x_1}{\alpha_{\delta}^{1/4}} \right)^{2} +\dfrac{1}{30} \left( \dfrac{x_1}{\alpha_{\delta}^{1/4}} \right)^{6}\right) & \text{ if }  \dfrac{x_1^{4}}{\alpha_{\delta}} \leq 5, \\
		0 & \text{ otherwise,}
	\end{cases}\]
which solves 
	\[
		\begin{cases}
			-\Delta \varphi = \left( \alpha_{\delta} - x_1^{4} \right) \mathbb{1}_{\{\varphi > 0\}} \faskip \text{in } \RR^{2},
			\varphi \geq 0 & \text{in }\RR^{2}.
		\end{cases}
	\]
	In particular, $ -\Delta \varphi \geq \alpha_{\delta} - x_1^{4} - x_1^{2}x_2^{2} - (\delta x_2)^{6} $ in $ \RR^{2} $. Since $ v_{\delta} \equiv 0 $ on $ \partial B_{\delta^{-1} R} $, we can use Lemma \ref{lem:mp} in the ball $ B_{\delta^{-1}R} $ to get $ v_{\delta} \leq \varphi $ in $ B_{\delta^{-1}R} $. 

	This yields that 
	\[
		v_{\delta} \leq \frac{\sqrt{5}\alpha_{\delta}^{3/2}}{3} \faskip \text{ in } \RR^{2},
	\]
	which, together with \eqref{eq:inf_noncoercive}, implies that $ \liminf_{\delta \rightarrow 0} \alpha_{\delta} > 0 $. Moreover,
	\[
		\supp (v_{\delta})\subset (-(5C_0)^{1/4}, (5C_0)^{1/4}) \times \RR \faskip \forall \delta \in (0,1).
	\]

	We can thus take a subsequence $ \delta_n \rightarrow 0 $ such that $ \alpha_{n} \rightarrow \alpha > 0$. Moreover, the right-hand side in \eqref{eq:obstacle_delta} is bounded in $ L^{\infty} $, so that, up to a further subsequence, $ v_{\delta_n} \rightharpoonup \overline{v} $ weakly in $ W^{2,p}_{\loc}(\RR^{2}) $, with $ \overline{v} \in L^{1}(\RR^{2}) $. This implies that 
	\[ 
		v(x) = \left( \frac{\overline{\alpha}}{\alpha} \right)^{2/3} \overline{v}\left( \left( \frac{\alpha}{\overline{\alpha}} \right)^{1/4} x \right)
	\]
solves \eqref{eq:obst_noncoercive}.
\end{proof}

\begin{prop}
	\label{prop:supp_noncoercive}
	There exist $ C_0, t_1 > 0 $ such that $ \supp (U_n) \subset (-t_1, t_1) \times \RR $. Moreover, there is $ C > 0 $ such that 
	\begin{equation}
		\label{eq:infty_noncoercive}
		\|U_n\|_{L^{\infty}(\RR^{2})}\leq C_0 \frac{\beta_n}{\varepsilon_n^{4}} \faskip \forall n\in \NN.
	\end{equation}
\end{prop}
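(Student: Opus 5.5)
We prove the support bound and the $L^\infty$ bound together by comparing $U_n$ with a one-dimensional barrier depending only on $y_1$, of the type used in the proof of Proposition \ref{prop:exist_noncoercive}. Set $q_n := \beta_n/\varepsilon_n^{4}$. We may assume $q_n$ is bounded; this follows exactly as in Proposition \ref{prop:nondeg_q_bound}, since the right-hand side is $\geq q_n/2$ on a small ball $B_{\overline r}$ when $q_n\to\infty$, and comparison with a paraboloid contradicts $\int U_n\mu_n=1$.

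\textbf{The barrier.} Take the profile
\[
\phi(t)=\begin{cases}\dfrac{\sqrt5}{3}-\dfrac{t^2}{2}+\dfrac{t^6}{30}, & t^4\le 5,\\[4pt] 0, & \text{otherwise}.\end{cases}
\]
It is $C^{1,1}$, nonnegative, and satisfies $-\phi''=(1-t^4)\mathbb{1}_{\{\phi>0\}}$. With a constant $\kappa>1$ to be fixed (for instance $\kappa=4$), define
\[
\varphi_n(y)=(\kappa q_n)^{3/2}\,\phi\bigl((\kappa q_n)^{-1/4}y_1\bigr).
\]
Then $-\partial_{11}\varphi_n=(\kappa q_n-y_1^4)\mathbb{1}_{\{\varphi_n>0\}}$. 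On the support, $y_1^4\le 5\kappa q_n\le 5\kappa C_0$, so $\varphi_n$ is a fixed-shape $C^{1,1}$ function with uniformly bounded second derivatives.

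\textbf{Perturbing to the operator $-\divg(H_n\nabla\cdot)$.} Because $\varphi_n$ depends only on $y_1$, we have
\[
-\divg(H_n\nabla\varphi_n)=-h^n_{11}\,\partial_{11}\varphi_n-(\partial_1 h^n_{11}+\partial_2 h^n_{12})\,\partial_1\varphi_n .
\]
The derivatives of $H_n$ are $O(\varepsilon_n\cdot\varepsilon_n|y|)$ by \eqref{eq:metric}. The difficulty is that $\varphi_n$ is not compactly supported in $y_2$, so these terms are only $o(1)$ for $|y_2|\ll\varepsilon_n^{-2}$. The same issue affects $h^n_{11}=1+O(\varepsilon_n^2|y|^2)$ on the whole of $B_{R_n}$.

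Handling this is the main obstacle. I would treat it by working in the original $x$-variables, where $|x|<R$ and $H(x)$ is a fixed uniformly elliptic matrix. I would then use the freedom in $\kappa$ and the strict inequality between the right-hand sides: $\varphi_n$ solves an equation with right-hand side $\kappa q_n-y_1^4$, whereas $U_n$ needs only $q_n-\tfrac12(y_1^4+y_1^2y_2^2)$ up to $o(1)$. In the zone where $\varphi_n>0$, the slack $(\kappa-1)q_n$ absorbs the error terms, since $q_n\ge c>0$ along subsequences; if $q_n\to0$ one argues separately, as in Corollary \ref{cor:nondeg_inf_bound}. In the zone where $\varphi_n=0$, the right-hand side of $U_n$ is negative, because $y_1^4\ge5\kappa q_n>2q_n$. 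If the metric error is genuinely not small far out in $y_2$, I would instead use the fact, taken from the hypothesis $g\le\gmax-\tfrac12(x_1^4+x_1^2x_2^2+x_2^6)$, that the right-hand side in $x$-variables is negative wherever $x_2^6\gtrsim\beta_n$. Together with Theorem \ref{thm:known}(3) and Lemma \ref{lem:nondeg}, this confines $\supp U_n$ to $|y_2|\lesssim\varepsilon_n^{-2/3}$ a priori, and there the metric corrections are $O(\varepsilon_n^{2/3})=o(1)$.

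\textbf{Comparison and conclusion.} With the supersolution inequality
\[
-\divg(H_n\nabla\varphi_n)\ge q_n-y_1^4-y_1^2y_2^2-\varepsilon_n^2y_2^6+o(1)
\]
established on $B_{R_n}$, we have $U_n=0\le\varphi_n$ on $\partial B_{R_n}$ by Theorem \ref{thm:known}(3). Lemma \ref{lem:mp} then gives $U_n\le\varphi_n$. Consequently $\supp U_n\subset\{y_1^4\le5\kappa q_n\}\subset(-t_1,t_1)\times\RR$ with $t_1=(5\kappa C_0)^{1/4}$. Moreover
\[
\|U_n\|_\infty\le\tfrac{\sqrt5}{3}(\kappa q_n)^{3/2}\le C\,q_n,
\]
using $q_n\le C_0$. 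This is \eqref{eq:infty_noncoercive}.
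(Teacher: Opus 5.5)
Your strategy is a legitimate alternative to the paper's route. You use a single $y_1$-dependent obstacle profile $\varphi_n$ to get the strip and the sup bound at once. The paper instead proceeds in steps: an $H^1$ bound from minimality, an $L^2$ bound by Nash, a sup bound on balls far out in $y_1$ by the local maximum principle, the strip by Lemma \ref{lem:nondeg}, and only then the quadratic barrier $\psi=\frac{q_n}{\inf_{B_R}h_{11}}(r_1^2-y_1^2)$. However, your decisive supersolution inequality is not established, and as you set it up it fails.

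On all of $B_{R_n}$, the only bound you have for the right-hand side $f_n$ of $U_n$ is $f_n\le q_n-\frac12(y_1^4+y_1^2y_2^2+\varepsilon_n^2y_2^6)$. The uniform additive $o(1)$ in your final display is not available there, because the rescaled remainder $\varepsilon_n^{-4}r(\varepsilon_n y)$ is small only relative to the polynomial. Against this bound $\varphi_n$ is not a supersolution. At $y_2=0$ you need $\kappa q_n-y_1^4\ge q_n-\frac12y_1^4$, i.e. $y_1^4\le 2(\kappa-1)q_n$. This fails on the part $2(\kappa-1)q_n<y_1^4<5\kappa q_n$ of $\{\varphi_n>0\}$, for every $\kappa$. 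The mismatch $\frac12y_1^4$ is of the same order as the slack $(\kappa-1)q_n$, so ``the slack absorbs the error terms'' is false there.

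If you keep coefficient $1$ plus an additive $o(1)$, absorption (also in $\{\varphi_n=0\}$) needs $q_n\ge c>0$, and that step is circular. The paper gets $\liminf q_n>0$ only in the proof of Theorem \ref{mainthm:noncoercive}, from \eqref{eq:infty_noncoercive} itself plus the tail estimate. The argument of Corollary \ref{cor:nondeg_inf_bound} needs the support in a fixed ball, whereas here you only have a strip.

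Your ``main obstacle'' is also misdiagnosed. Since $|\varepsilon_n y|<R$ on $B_{R_n}$, you have $|\nabla_y H_n|\le\varepsilon_n\|H\|_{C^1(B_R)}$ uniformly. So the first-order term $(\partial_1h^n_{11}+\partial_2h^n_{12})\partial_1\varphi_n$ is $O(\varepsilon_n q_n^{5/4})=o(q_n)$ everywhere. The backup confinement in $y_2$ via Lemma \ref{lem:nondeg} is therefore unnecessary, and it would anyway need an a priori sup bound you do not have. The only genuine issue is the multiplicative factor $h^n_{11}$.

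The repair needs no lower bound on $q_n$. Take the profile with matching coefficient, $-\partial_{11}\varphi_n=\kappa q_n-\frac12y_1^4$ on $\{\frac12y_1^4<5\kappa q_n\}$. Shrink $R$ so that $\lambda\le h_{11}\le\Lambda<\frac54$ on $B_R$, which is possible by \eqref{eq:metric}. The supersolution inequality then reduces to three cases, all true for $\kappa$ and $n$ large:
\begin{itemize}
\item[(i)] $(\lambda\kappa-1)q_n\ge o(q_n)$ where $\frac12y_1^4\le\kappa q_n$;
\item[(ii)] $(\kappa(5-4\Lambda)-1)q_n\ge o(q_n)$ on the rest of the support;
\item[(iii)] $0\ge q_n-\frac12y_1^4$ outside.
\end{itemize}
Lemma \ref{lem:mp} on $B_{R_n}$ then gives the strip and $\|U_n\|_\infty\le Cq_n^{3/2}\le C'q_n$ simultaneously. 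This is shorter than the paper's chain. The paper's $\psi$ avoids the coefficient matching because $-\partial_{11}\psi$ is a positive constant, at the price of needing the strip first.
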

\begin{proof}
	The proof of Proposition \ref{prop:nondeg_q_bound} may be repeated verbatim in this case, which implies that $ \frac{\beta_n}{\varepsilon_n^{4}} \leq C_0 $ for some $ C_0 > 0 $. Then, $ (U_n)_{n\in \NN} $ is bounded in $ W^{2,p}_{\loc} $ for all $ p \in (1,\infty) $.

	Note that, since $ \supp (U_n) \subset B_{\varepsilon_n^{-1}R} $, we have that $ U_n $ minimizes 
	\[
		v\mapsto \int_{B_{\varepsilon_n^{-1}R}} \frac{|H_n \nabla v|^{2}}{2} + \left( y_1^{4} +y_1^{2}y_2^{2} + r_n(x) \right) v \mathrm{dy} 
	\]
	among functions all nonnegative functions $ v \in H^{1}_0(B_{\varepsilon_n^{-1}R}) $. Then, for $ n $ sufficiently large, a bump function $ \varphi $ for $ B_{1/2} $ is admissible, so that
	\[\|\nabla U_n\|_{L^{2}(\RR^{2})} \leq C_1.\]
By Nash's inequality, we have $ \|U_n\|_{L^{2}(\RR^{2})} \leq C_2 $.

Take $ \overline{x} \in B_{\varepsilon_n^{-1}R}$ such that 
\[
	1 - \frac{1}{2} x_1^{4} \leq 0 \faskip \forall x \in B_2(\overline{x}).
\]
In particular, 
\[
	-\divg(H_n\nabla U_n)\leq \left( \frac{\beta_n}{\varepsilon_n^{4}} - \frac{1}{2}(y_1^{4} + y_1^2 y_2^2 + \varepsilon_n^{2}y_2^6) \right) \mathbb{1}_{\{U_n > 0\}} \leq 0 \faskip \forall x\in B_2(\overline{x})
\]
By the local maximum principle (see \cite[Theorem 8.17]{GT83}), we have
\[
	\sup_{B_1(x_0)} U_n \leq C \| U_n \|_{L^{2}(B_2(\overline{x}))} \leq C_3.
\]

Take $ t = (2C_3 \kappa_0 + C_0)^{1/4} $, where $ \kappa_0 $ is that of Lemma \ref{lem:nondeg}, and compute
\[
	-\divg(H_n \nabla U_n) \leq -C_3 \kappa_0 \mathbb{1}_{\{U_n > 0 \}} \faskip \forall x \in (- t, t) \times \RR
\]
This yields, as in the proof of Proposition \ref{prop:compact}, that $ U_n \equiv 0 $ for $ |x_1| > \frac{3}{2}t = t_1 $.

Define 
\[\psi = \frac{\beta_n}{\varepsilon_n^{4}\inf_{B_R} h_{11}} \left( r_1^{2} - y_1^{2} \right),\]
which satisfies
\begin{align*}
	-\divg(H_n \nabla\psi) &= -h_{11}^{n} \partial_{y_1y_1}\psi - \partial_{y_1}\psi(\partial_{y_1}(h_{11}^{n}) + \partial_{y_2} (h_{12}^{n})) \\
			       &= \frac{2 \beta_n}{\varepsilon_n^{4}\inf_{B_R} h_{11}}h_{11}^{n} +\varepsilon_n \left(\frac{2\beta_nx_1}{\varepsilon_n^{4}}(\partial_1h_{11}(\varepsilon_n y) + \partial_2 h_{12}(\varepsilon_n y))\right) \\
			       &\geq \frac{2 \beta_n}{\varepsilon_n^{4}}  - \frac{2\beta_n r_1 \|H\|_{C^{1}(B_R)}}{\varepsilon_n^{4}} \varepsilon_{n} \\
			       &\geq \frac{\beta_n}{\varepsilon_n^{4}}
\end{align*}
in $ (-r_1, r_1)\times \RR$ for $ n $ sufficiently large. Apply Lemma \ref{lem:mp} in $ (-r_1, r_1)\times (-\varepsilon_n^{-1}R, \varepsilon_n^{-1}R) $ to get $ \psi \geq U_n $ in this set. In particular, 
\[U_n \leq \frac{\beta_n r_1^{2}}{\varepsilon_n^{4}\inf_{B_R} h_{11}} \faskip \text{in }B_{\varepsilon_n^{-1}R},\]
which completes the proof.
\end{proof}

\begin{prop}
	There exist $ C_1,C_2, \rho_0, n_0 > 0$ such that for all $ \rho > \rho_0 $ and $ n\geq n_0 $,
	\[\supp(U_n) \cap \{|x_2| \in (\rho, 2\rho)\}\subset \left\{|x_1| \leq \frac{C_1}{\rho}\right\} \cap \{|x_2| \in (\rho, 2\rho)\}, \]	
	Moreover, in this latter set, 
	\[ U_n \leq \frac{C_2\beta_n}{\varepsilon_n^4\rho^2}. \]
\end{prop}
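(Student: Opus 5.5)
The plan is to repeat the two comparison arguments of Proposition \ref{prop:supp_noncoercive}, now localized to the horizontal strip $\{|x_2|\in(\rho,2\rho)\}$. In that region the term $y_1^2y_2^2$ in the right-hand side is at least $\rho^2 y_1^2$. This plays the role that $y_1^4$ played before, and it forces the support to shrink like $1/\rho$ in the $x_1$ direction. Throughout I will use two facts from Proposition \ref{prop:supp_noncoercive}: $\beta_n/\varepsilon_n^4\le C_0$ and $\|U_n\|_{L^\infty}\le C_0\beta_n/\varepsilon_n^4\le C_0^2$, uniformly in $n$.

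\textbf{Step 1 (support).} Fix a point $\overline{x}$ with $|\overline{x}_2|\in(\rho,2\rho)$ and $|\overline{x}_1|\ge C_1/\rho$. Consider a ball of radius $r\sim 1/\rho$ around $\overline{x}$, with $\rho_0$ large so that $r$ is small compared with $\rho$. On this ball we have $|y_1|\gtrsim C_1/\rho$ and $|y_2|\ge \rho/2$. Hence, up to the $o(1)$ error,
\[\frac{\beta_n}{\varepsilon_n^4}-y_1^4-y_1^2y_2^2-\varepsilon_n^2y_2^6 \le C_0-\tfrac{1}{8}C_1^2 .\]
Now rescale $W(z)=\rho^2 U_n(\overline{x}+z/\rho)$. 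This maps the ball of radius $\sim 1/\rho$ to a unit ball. The operator becomes $-\divg(\tilde H_n\nabla W)$, with the same ellipticity constant and coefficients still close to the identity. The right-hand side stays the same pointwise, since the factor $\rho^2$ from $W$ cancels the factor $\rho^{-2}$ from the two derivatives. The sup bound transforms to $W\le \rho^2 C_0^2$.

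This is the delicate point. The nondegeneracy Lemma \ref{lem:nondeg} requires $f\le-\kappa_0\|W\|_\infty$, which here would mean $C_1^2\gtrsim \rho^2$. That is not uniform in $\rho$, so a single-scale argument fails. I would therefore first establish the sup bound of Step 2 on the strip, which is independent of Step 1, giving $U_n\le C/\rho^2$ there. With this, $\|W\|_\infty\le C$ uniformly, and Lemma \ref{lem:nondeg} applies once $C_1^2\ge 8(C_0+C\kappa_0)$. It yields $W\equiv0$ on the half ball, that is $U_n\equiv0$ near $\overline{x}$.

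\textbf{Step 2 (sup bound on the strip).} I would obtain the bound in two stages.

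First, I get a crude support bound in $x_1$. By Proposition \ref{prop:supp_noncoercive}, $U_n$ is supported in $|y_1|<t_1$. On $\{|y_2|\in(\rho/2,4\rho)\}$ the right-hand side is $\le C_0-\rho^2y_1^2/2$. Using the bound $U_n\le C_0^2$ together with the nondegeneracy argument of Proposition \ref{prop:compact} at unit scale, $U_n$ vanishes wherever $\rho^2y_1^2\ge C$. This already gives support in $|y_1|\le C/\rho$ on the slightly enlarged strip.

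Second, I use a barrier. On the box $(-C/\rho,C/\rho)\times(\rho/2,4\rho)$, set
\[\psi=\frac{\beta_n}{\varepsilon_n^4\inf h_{11}}\Big(\frac{C^2}{\rho^2}-y_1^2\Big).\]
As in the proof of Proposition \ref{prop:supp_noncoercive}, this satisfies $-\divg(H_n\nabla\psi)\ge \beta_n/\varepsilon_n^4$. The first-order terms are controlled because $|y_1|\le C/\rho$ and $\varepsilon_n$ is small. On the vertical sides $U_n=0\le\psi$.

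The horizontal sides $y_2=\rho/2$ and $y_2=4\rho$ are not covered by this barrier. I expect this to be the main obstacle. To handle it, I would add to $\psi$ a harmonic-type corrector that equals $C_0^2$ on the top and bottom sides and vanishes on the vertical sides, in the spirit of $\psi_n$ in Proposition \ref{prop:deg_bound_q}. The box has width $\sim1/\rho$ and height $\sim\rho$, so this corrector decays like $e^{-c\rho^2}$ in the middle strip $(\rho,2\rho)$. Lemma \ref{lem:mp} then gives
\[U_n\le \frac{C_2\beta_n}{\varepsilon_n^4\rho^2}+C e^{-c\rho^2}\beta_n/\varepsilon_n^4 ,\]
where the corrector amplitude is taken as $C_0\beta_n/\varepsilon_n^4$, using the bound \eqref{eq:infty_noncoercive} itself. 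Since $e^{-c\rho^2}\le \rho^{-2}$ for $\rho>\rho_0$, this is the claimed estimate. Feeding it into Step 1 closes the argument.
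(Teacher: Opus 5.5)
\textbf{Gap in Step 2: the first-stage support bound does not follow.} Your Step 1 is correct once you have $U_n\le C/\rho^2$ on a neighbourhood of the strip. The problem is that Step 2 never delivers that bound.

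Its first stage claims that unit-scale nondegeneracy, using only $U_n\le C_0^2$, forces $U_n=0$ where $\rho^2y_1^2\ge C$. It does not. Lemma \ref{lem:nondeg} on $B_1(\overline{x})$ needs the right-hand side to be $\le-\kappa_0C_0^2$ on the whole ball. Any unit ball centred at a point with $|\overline{x}_1|<1$ contains points with $y_1=0$. There the $y_1$-dependent terms vanish, and nothing pushes the right-hand side below $-\kappa_0C_0^2$. So at unit scale you only learn that the support has $x_1$-width $O(1)$.

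Shrinking the ball does not fix this. With $\sup U_n\le K$, a ball of radius $r\sim|\overline{x}_1|$ works only if $\rho^2|\overline{x}_1|^4\gtrsim K$, so you get width $\sim(K/\rho^2)^{1/4}=O(\rho^{-1/2})$. With width $\rho^{-1/2}$, your parabola barrier $\psi$ gives only $U_n\lesssim\rho^{-1}$, and Step 1 would then need $C_1^2\gtrsim\rho$, which is not uniform. The $C/\rho$ support bound and the $C/\rho^2$ height bound each presuppose the other, so the argument is circular as written. Iterating $a\mapsto c\,a^{1/2}\rho^{-1/2}$ about $\log\log\rho$ times might close it, but that is a different and more delicate proof.

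\textbf{Missing idea: use a one-dimensional obstacle profile as the barrier.} Your $\psi$ is a supersolution only where it is positive, so it needs the support in advance. The paper works instead on the wide box $(-r_1,r_1)\times(\rho/2,\cdot)$, whose vertical sides lie outside $\supp U_n$ by Proposition \ref{prop:supp_noncoercive}. With $c_1=\inf_{B_R}h_{11}$ and $C_1\ge\beta_n/\varepsilon_n^4$, it uses
\[
\varphi_1(y_1)=\frac{1}{c_1}\Big(\frac{24C_1^2}{\rho^2}-C_1y_1^2+\frac{\rho^2}{96}y_1^4\Big)\ \text{ for } y_1^2\le\frac{48C_1}{\rho^2},\qquad \varphi_1=0\ \text{ otherwise}.
\]
This function is $C^{1,1}$, since its value and derivative vanish at the cutoff. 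It satisfies $-\varphi_1''=c_1^{-1}(2C_1-\rho^2y_1^2/8)$ where it is positive. Lemma \ref{lem:mp} only asks for $-\divg(H_n\nabla w)\ge f$. Where $\varphi_1=0$, the actual right-hand side already satisfies $f\le C_1-\rho^2y_1^2/8\le-5C_1$, so $\varphi_1$ is admissible across the whole box.

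The top and bottom sides are handled by convex quadratic layers $\varphi_2,\varphi_3$ of fixed width, independent of $\rho$. They equal $C_0C_1\ge U_n$ on those sides, and their negative contribution to $-\divg(H_n\nabla w)$ is absorbed by the slack between $2C_1$ and $C_1$. In the middle of the strip $w=\varphi_1$. So one comparison yields both $U_n=0$ for $|y_1|>\sqrt{48C_1}/\rho$ and $U_n\le 24C_1^2/(c_1\rho^2)$, and your Step 1 becomes unnecessary.

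Your decaying corrector could replace $\varphi_2,\varphi_3$: on a box of width $2r_1$ it decays like $e^{-c\rho}$, which is still enough. However, it is positive everywhere, so you would then need your Step 1 to get exact vanishing of $U_n$.
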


\begin{proof}
	Let $ \rho > 0 $. By the symmetry of the equation, it suffices to prove the result for $ x_2 > \rho $. Denote $ Q_{\rho} = (-r_1, r_1) \times \left( \frac{\rho}{2}, \rho \right) $. In this set, we have
	\begin{align*}
		\frac{\beta_n}{\varepsilon_n^{4}} - (y_1^4 + y_1^2y_2^2) + r_n(y) & \leq \frac{\beta_n}{\varepsilon_n^{4}} - \frac{1}{2} y_1^2 y_2^2 \\ 
										  &\leq C_1 - \frac{\rho^2}{8} y_1^2 ,
	\end{align*}
for some $ \frac{\beta_n}{\varepsilon_n^4} \leq C_1 $. 

Denote $ c_1 = \inf_{B_R} h_{11} $, $ c_2 = \sup_{B_R}h_{22} $ and define the following functions
\begin{align*}
	\varphi_1(x_1) & = \begin{cases}
		\frac{1}{c_1} \left( \frac{24C_1^2}{\rho^2} - C_1 x_1^2 + \frac{\rho^2}{96} x_1^4 \right)  & \text{ if } x_1^2 \leq \frac{48C_1}{\rho^2}, \\
		0 & \text{ otherwise},
	\end{cases} \\ 
		\varphi_2(x_2) & = \begin{cases}
			\dfrac{C_1}{8r_2} \left( x_2 - \left( \frac{\rho}{2} +\sqrt{8C_0r_2} \right)  \right)^2 & \text{ if } x_2 - \frac{\rho}{2} \in  \left( 0, \sqrt{8C_0r_2} \right), \\ 
			0  & \text{ otherwise },
		\end{cases} \\ 
		\varphi_3(x_2) & = \begin{cases}
			\dfrac{C_1}{8r_2} \left( x_2 - \left( 3\rho -  \sqrt{8C_0r_2} \right)  \right)^2 & \text{ if } 3\rho - x_2 \in  \left( 0,\sqrt{8C_0r_2} \right), \\ 
			0  & \text{ otherwise },
		\end{cases}
\end{align*}
and $ w (y_1, y_2) = \varphi_1(y_1) + \varphi_2(y_2) + \varphi_3(y_2) $. We compute
\begin{align*}
	-\divg(H_n \nabla \varphi_1) &= \frac{1}{\inf_{B_R}h_{11}} \left( h_{11}^n \left( 2C_1 - \frac{\rho^2}{8} y_1^2 \right) +\varepsilon_n \left( 2C_1 y_1 - \frac{\rho^2}{24} y_1^3 \right)(\partial_1 h^n_{11}(\varepsilon_n y) + \partial_2 h^n_{12}(\varepsilon_n y)) \right)\mathbb{1}_{\{ \varphi_1 > 0\}} \\
				     &\geq 2C_1 - \frac{\rho^2}{8}y_1^2 - \frac{\varepsilon_n}{\rho}\left(2C_1 + \frac{1}{24}\right)\|H\|_{C^1(B_R)}\\
				     &\geq 2C_1 - \frac{\rho^2}{8}y_1^2 - \varepsilon_n\left(2C_1 + \frac{1}{24}\right)\|H\|_{C^1(B_R)}\\
				     &\geq \frac{7C_1}{4} - \frac{\rho^2}{8}y_1^2
\end{align*}
for $ n $ large enough but independent of $ \rho $. Moreover, for $ x_2 \in \left( \frac{\rho}{2}, \frac{\rho}{2} + \sqrt{8C_0r2} \right)  $ 
\begin{align*}
	-\divg(H_n \nabla \varphi_2) & = -h_{22}^n \frac{C_1}{4r_2} - \varepsilon_n	\frac{C_1}{4r_2} \left( x_2 -\left( \frac{\rho}{2} + \sqrt{8C_0r_2} \right) \right) \left(\partial_1 h_12^n(\varepsilon_n y) + \partial_2 h_22^n(\varepsilon_n y) \right) \\
				     & \geq -\frac{C_1}{4} - \varepsilon_n\sqrt{8C_0r_2} \|H\|_{C^1(B_R)}\\
				     & \geq -\frac{3C_1}{8},
\end{align*}
for $ n $ large independently of $ \rho $. The computation for $ \varphi_3 $ in the range $ x_2 \in (3\rho - \sqrt{8C_0r_2}, 3\rho) $ is analogous. We conclude
\[
	-\divg(H_n \nabla w) \geq C_1 - \frac{\rho^2}{8}y_1^2 \faskip \text{ in }Q_{\rho}.
\]

Since, by construction, $ w\geq 0  $ in $ Q_t $ and $ w \geq U_n $ on $ \partial Q_t $, we apply Lemma \ref{lem:mp} to get $ w \geq U_n $. Then, $ U_n \equiv 0 $ if $ x_1 > \frac{\sqrt{48C_1}}{\rho} $, and
\[ U_n \leq \frac{24 C_1^2}{\rho^2 \inf_{B_R}h_{11}} \faskip \text{ for } x_2 \in \left( \frac{\rho}{2} + \sqrt{8C_0r_2}, 2\rho - \sqrt{8C_0r_2}  \right). \]
The result follows by taking $ \rho_0 \geq 2 \sqrt{8C_0r_2} $. 
\end{proof}

\begin{proof}[Proof of Theorem \ref{mainthm:noncoercive}]
	We know that $ \limsup_{n\rightarrow \infty} \frac{\beta_n}{\varepsilon_n^{4}} < \infty $. Then, the right-hand side of the equation for $ U_n $ is bounded in compact sets. Replicating the argument of the proof of Theorem \ref{mainthm:isolated_hom}, there exists $ \overline{U} \in W^{2,p}_{\loc}(\RR^{2}) $ such that $ U_n \rightharpoonup \overline{U} $ weakly in $ W^{2,p}_\loc(\RR^{2}) $ and a.e. in $ \RR^{2} $.
	
	Take $ \rho_k = 2^k \rho_0 $ for $ k\in \NN $ and define 
	\begin{align*}
		Q_0 &= (-r_1, r_1) \times (-\rho_0, \rho_0), \\ 
		Q_{k+1} &= \left( -\frac{C}{\rho_k}, \frac{C}{\rho_k} \right) \times ((-\rho_{k+1}, -\rho_k) \cup (\rho_k, \rho_{k+1})), \\
		\Psi(y) &= T \sum_{k=0}^{\infty}\frac{1}{\rho_k^2} \mathbb{1}_{Q_k} 
	\end{align*}
	where $ T $ is chosen so that $ C_0 \frac{\beta_n}{\varepsilon_n^{4}} \leq T$ in \eqref{eq:infty_noncoercive}. Then, $ U_n \leq \Psi $ for all $ n $ sufficiently large. Moreover, 
	\begin{align*}
	\int_{\RR^{2}} \Psi = T \left( 4r_1\rho_0 + \sum_{k=0}^{\infty}\frac{4C(\rho_{k+1} - \rho_k)}{\rho_k^{3}} \right) \leq C \sum_{k=0}^{\infty} \frac{1}{2^{2k}} < \infty.
	\end{align*}

	By the dominated convergence theorem, we have that 
	\[ 
		1 = \int_{\RR^{2}} U_n \mu_n(y) \mathrm{d}y \rightarrow \int_{\RR^{2}} \overline{U} \mathrm{d}y,
	\]
	which implies $\| \overline{U} \|_{L^{1}(\RR^{2})} = 1$. 

		Hence, by \eqref{eq:infty_noncoercive}, we must have $ \liminf_{n\rightarrow \infty} \frac{\beta_n}{\varepsilon_n^{4}} > 0 $. Therefore, we may choose a further subsequence such that $ \frac{\beta_n}{\varepsilon_n^{4}} \rightarrow \overline{\alpha} > 0 $ and $ \overline{U} $ solves \eqref{eq:obst_noncoercive}.  
	\end{proof}

	\section{Miscellanea}
	\label{sect:misc}

	The approach used in the present paper may be used to treat a variety of other degenerate cases. In this section, we comment on other cases and point out some limitations of this approach.

	\subsection{Maximum attained on a curve}

	The analysis performed in Section \ref{sect:inhom} may be translated to a case where $ g $ attains its maximum on a closed curve. We describe how to do this in the sequel. Note that while we assume certain nondegeneracy of $ g $ along the whole curve, one could also study degenerate cases, where some concentration phenomena (akin to the one described after Theorem \ref{mainthm:isolated_hom}) is expected.

	Let $ \Gamma = \mathbb{S}^{2}$, and let $ \gamma \subset \Gamma$ be a circle in $ \Gamma $. Parametrize $ \gamma $ by $ s:[0,2\pi] \rightarrow \Gamma $ and write, in coordinates,
	\[s(t) = \left(\sin(\phi_0)\cos \left( t \right) , \sin(\phi_0)\sin\left( t \right), \cos(\phi_0)\right), \]
	for $ \phi_0 \in \left(0, \frac{\pi}{2}\right) $. Assume that $ g $ attains a maximum 
	on $ \gamma $, and that 
	\[g(\phi, t) = \gmax - a(t) (\phi - \phi_0)^{2} + O(|\phi - \phi_0|^{3}) \leq \gmax - \frac{a(t)}{2}(\phi - \phi_0)^{2}\]
	if $ |\phi - \phi_0| \leq \frac{1}{4} $with $ \inf a(t) = a_0 > 0 $. Suppose, without loss of generality, that there are no other maxima for $ \phi \in \left(\frac{3}{4}\phi_0, \frac{5}{4}\phi_0\right) $

	Let $ M_n \rightarrow 0 $, and take $ n $ sufficiently large in order to have $ u_n(\phi_0 \pm \frac{\phi_0}{4}, t) = 0 $ for all $ t \in [0,L]$, denote $ \Theta \subset \Gamma$ as the set delimited by the curves $ \{\phi = \left( 1 \pm \frac{1}{4} \right) \phi_0 \} $, and $ m_n $ the mass of $ u_n $ over $ \Theta $.

	Define the rescaling
	\[u_m(\phi, t) = m_n^{4/5}U_n(\phi_0 + m_n^{-1/5} \eta, t),\]
	which satisfies
	\begin{align*}
		\int_{\Theta} u_m \mathrm{d}x &= \int_{0}^{2\pi} \int_{3\phi_0/4}^{5\phi_0/4} u_m(\phi, t) \sin(\phi) \mathrm{d}\phi \mathrm{d}t	\\
					      &= \int_{0}^{2\pi} \int_{-m_n^{-1/5}\phi_0/4}^{m_n^{-1/5}\phi_0/4} m_n U_n(t, \eta) \sin \left( \phi_0 + m_n^{1/5} \eta \right) \mathrm{d}\eta \mathrm{d}t,
	\end{align*}
	and hence
	\[
		 \int_{0}^{2\pi} \int_{-m_n^{-1/5}\phi_0/4}^{m_n^{-1/5}\phi_0/4} U_n(t, \eta) \sin \left( \phi_0 + m_n^{1/5} \eta \right) \mathrm{d}\eta \mathrm{d}t = 1.
	\]
	In what follows, $ \varepsilon_n := m_n^{1/5} $.

	Moreover, denoting
	\[
		L_n V =  \frac{\partial^{2} V}{\partial \eta^{2}} + \varepsilon_n \text{cotan}(\phi_0 + \varepsilon_n\eta) \frac{\partial V}{\partial \eta} + \frac{\varepsilon_n^2}{\sin^2(\phi_0 + \varepsilon_n \eta)} \frac{\partial^{2} V}{\partial s^2},
	\]
	we may write \eqref{eq:obstacle_gamma} for $ U_n $ as
	\[
		-L_n	U_n = \left( \frac{\beta_n}{\varepsilon_n^2} - a(t) \eta^2 +r_n(\eta)\right) \mathbb{1}_{\{U_n > 0 \}} \faskip \text{ in } \left(-\frac{1}{4\varepsilon_n}, \frac{1}{4\varepsilon_n}\right) \times 2\pi \mathbb{S}^1
	\]

	In the formal limit, we have
	\[
		\begin{cases}
			-\partial^{2}_{\eta \eta} \overline{U} = \left( \overline{\alpha} - a(t) \eta^2  \right) \mathbb{1}_{\{\overline{U} > 0\}} \faskip \text{ in }\RR \times 2\pi \mathbb{S}^{1},\\ 
			\overline{U} \geq 0,~~ \int_{0}^{2\pi} \int_{\RR} \overline{U}(\eta, t) \mathrm{d}\eta \mathrm{d}t = \frac{1}{\sin(\phi_0)}.
		\end{cases}
	\]
	By the argument in Section \ref{ssect:limit_deg}, we readily see that the solution $ (\overline{U}, \overline{\alpha}) $ to this problem is unique and has the form
	\[
		\overline{U}(\eta, t) = \begin{cases}
			\frac{3 \overline{\alpha}^2}{2a(t)} -\frac{\overline{\alpha}}{2}\eta^2	+\frac{a(t)}{12} \eta^4 & \text{ if } \frac{a(t) \eta^2}{3} \leq \overline{\alpha},\\
			0 & \text{ otherwise,}
	\end{cases}
	\]
	with $ \overline{\alpha} $ chosen in order to satisfy the mass restriction, namely
	\[\overline{\alpha} = \left( \frac{23 \sqrt{3} \sin(\phi_0)}{20} \int_{0}^{2\pi} a^{-3/2}(t)\mathrm{d}t \right)^{-2/5}. \]
	By the assumption $ a \geq a_0 > 0 $, $ \overline{U} $ is well defined.

	Note that, in this case, it is possible to compare $ U_n $ with the subsolution
	\[\psi(\eta, t) = \frac{\beta_n}{4\varepsilon_n^2} (r_0^{2} - \eta^2)\]
	for $ r_0 > 0 $ depending only on $ a $ and a lower bound for $ \frac{\beta_n}{\varepsilon_n^{2}} $. As in Proposition \ref{prop:deg_bound_q}, this implies $ \limsup_n \frac{\beta_n}{\varepsilon_n^2} < \infty$.

	The rest of the discusion in Section \ref{sect:inhom} may be carried out verbatim to get
	\[U_n \rightarrow \overline{U} \faskip \text{ in } L^{\infty}(\RR \times 2\pi\mathbb{S}^1)\]
	and 
	\[\partial_{\eta}U_n \rightharpoonup \partial_{\eta}\overline{U} \faskip \text{ weakly in } L^{2}(\RR \times 2\pi\mathbb{S}^1).\]

\subsection{A heuristic argument for other polynomials}

The analysis performed in Section \ref{sect:noncoercive} may be useful in other cases where the level sets of the homogeneous part of $ g $ lead to $ L^1 $ estimates. However, the method breaks down if the homogeneous part $ \gmax - g $ has a zero of sufficiently high degree in a given direction. 

Consider for example
\[g(x) = \gmax - x_1^8 - x_1^6 x_2^2 + r(x).\]
This implies, as in Section \ref{sect:noncoercive}, that, if $ x_2 > \rho $, then 
\[
	\begin{array}{cc}	
		U_m = 0, \faskip \forall |x_1|>\frac{C}{\rho^{1/3}}, \text{ and } & U_m \leq \frac{C}{\rho^{2/3}}, \faskip \forall |x_1| \leq \frac{C}{\rho^{1/3}},
	\end{array}
\]
which implies that, with $ Q_k $ defined analogously as in the proof of Theorem \ref{mainthm:noncoercive}, we define
\[\Psi = T \sum_{k=0}^{\infty} \frac{1}{\rho_k^{2/3}} \mathbb{1}_{Q_k}\]
which implies 
\[
	\int_{\RR^{2}} \Psi \geq C \sum_{k}^{\infty} \frac{\rho_{k+1} - \rho_k}{\rho_k} = C \sum_{k}^{\infty} 1 = \infty,
\]
so that the dominated convergence theorem cannot be applied.

In a general setting, there is a simple heuristic giving way to an implicit description of the limit profile. Suppose that a higher order Taylor expansion yields
\[g(x) = \gmax - x_1^8 - x_1^6 x_2^2 - x^{10} + r(x).\]
Note that, in this case, the homogeneous polynomial of least degree has a nontrivial zero set (i.e. $ \{x_1 = 0\} $), but the higher order term forces the maximum at $ x=0 $ to be strictly positive.

Define the coordinates
\[
	\begin{array}{cc}
		x_1 = \ell y_1, & x_2 = x_{2,0} + \ell y_2,		
	\end{array}
\]
where $ x_{2,0}, \ell > 0 $ may depend on $ \beta_{M} $. Moreover, we will require that $ \ell \ll x_{2, 0} $. 

Using the same notation as in the previous sections, we compute
\[
	(1 + \alpha_M)g - 1 = \beta_M - x_{2,0}^{10} -\ell^{6} x_{0,2}^{2} y_1^6 \left( \left(\frac{\ell}{x_{0,2}^2} y_1 \right)^2+ \left( 1 + \frac{\ell}{x_{0,2}} y_2 \right)^2 \right) + o(1).
\]
Choose $ x_{0,2} = t \beta_{M}^{1/10} $ with $ t \in (0,1) $ fixed and $ \ell = \beta_M^{2/15} $. Hence
\[
	(1 + \alpha_M)g - 1 = \beta_M \left( 1 - t^{10} - t^{2}y_1^6  + o(1) \right) 
\]
this suggests the rescaling
\[
	u_M(x) = \beta_M^{19/15} U_M(\beta_M^{-2/15} x_1, \beta_M^{-2/15}(x_2 - t\beta_M^{1/10})),
\]
which satisfies
\[
	-L_M U_M = \left( 1 - t^{10} - t^2y_1^6 +o(1) \right)\mathbb{1}_{\{U_M > 0\}},
\]
with $ -L_M\varphi \rightarrow -\Delta \varphi $ for any $ \varphi \in C^{2}_c(\RR^2) $. 

Assuming that $ U_M $ is bounded in $ L^{\infty}(\RR^{2}) $, using nondegeneracy and regularity estimates we get that the support of $ U_M $ is bounded in the $ y_1 $ direction and that it converges, up to a subsequence and weakly in $ W^{2,p}_{\loc}(\RR^{2}) $ to a bounded solution of
\[-\Delta \overline{U} = \left( 1- t^{10} - t^2 y_1^6 \right) \mathbb{1}_{\{\overline{U} > 0\}}.\]
This equation may be solved explicitly. If $|t| > 1$, then $\overline{U} = 0$. If $|t| < 1$, by a computation similar to that of Section \ref{sect:inhom} get
\[
\overline{U}(y) =
\begin{cases}
\dfrac{(1-t^{10})^{4/3}}{t^{2/3}} \left( \dfrac{3 \sqrt[3]{7}}{8}- \dfrac12 \left( \dfrac{t^{1/3}y_1}{(1-t^{10})^{1/6}}\right)^2 + \dfrac{1}{56}\left( \dfrac{t^{1/3}y_1}{(1-t^{10})^{1/6}}\right)^8\right) & \text{ if }	\left( \dfrac{t^{1/3}y_1}{(1-t^{10})^{1/6}}\right)^6 \leq 7 \\
0  &\text{ otherwise}.
\end{cases}
\]

Reverting back to the $x$ variable yields several consequences. First, if $t$ is of order $\beta_M^{1/10}$, then
\[ u_M(0,t) \sim \frac{(1-t^{10})^{4/3}}{t^{2/3}},\]
for $M$ sufficiently small. If $x_2 > t \beta_M^{1/10}$, then
\[u_M \equiv 0 \faskip \forall |x_1| \geq \frac{C}{\beta_M^{2/15}t^{1/3}}, \]
which shows the asymptotics of the width of the support in the $x_1$ direction.

\textbf{Acknowledgments} The authors gratefully acknowledge the financial support of the Deutsche Forschungsgemeinschaft (DFG, German Research Foundation) through the collaborative research centre "Analysis of criticality: from complex phenomena to models and estimates" (CRC 1720, Project-ID 539309657) and the Bonn International Graduate School of Mathematics at the Hausdorff Center for Mathematics (EXC 2047/2 Project-ID 390685813). The authors would like to thank Marvin Weidner for valuable comments on a preliminary version of the paper.

\printbibliography

\end{document}